\documentclass[12pt]{amsart}
\usepackage{txfonts}      
\usepackage{amssymb}
\usepackage{eucal}
\usepackage{amsmath}
\usepackage{amscd}
\usepackage{xcolor}
\usepackage{multicol}
\usepackage[all]{xy}           
\usepackage{graphicx}
\usepackage{color}
\usepackage{colordvi}
\usepackage{xspace}
\usepackage{tikz}
\usepackage{makecell}
\usepackage{appendix}
\usepackage{amsthm}
\usepackage[misc]{ifsym}
\usepackage{mathrsfs} 
\usepackage{xypic}
\usepackage{extarrows}

\usepackage{ifpdf}
\ifpdf
\usepackage[colorlinks,final,backref=page,hyperindex]{hyperref}
\else
\usepackage[colorlinks,final,backref=page,hyperindex,hypertex]{hyperref}
\fi

\usepackage[active]{srcltx} 

\begin{document}
    \newtheorem{theorem}{Theorem}[section]
    \newtheorem{lemma}[theorem]{Lemma}
    \newtheorem{corollary}[theorem]{Corollary}
    \newtheorem{proposition}[theorem]{Proposition}
    \theoremstyle{definition}
    \newtheorem{definition}[theorem]{Definition}
    \newtheorem{example}[theorem]{Example}
    \newtheorem{remark}[theorem]{Remark}
    \newtheorem{pdef}[theorem]{Proposition-Definition}
    \newtheorem{condition}[theorem]{Condition}
    \renewcommand{\labelenumi}{{\rm(\alph{enumi})}}
    \renewcommand{\theenumi}{\alph{enumi}}
    \baselineskip=14pt

    \newcommand {\emptycomment}[1]{} 

    \newcommand{\nc}{\newcommand}
    \newcommand{\delete}[1]{}

    \nc{\todo}[1]{\tred{To do:} #1}

    \nc{\tred}[1]{\textcolor{red}{#1}}
    \nc{\tblue}[1]{\textcolor{blue}{#1}}
    \nc{\tgreen}[1]{\textcolor{green}{#1}}
    \nc{\tpurple}[1]{\textcolor{purple}{#1}}
    \nc{\tgray}[1]{\textcolor{gray}{#1}}
    \nc{\torg}[1]{\textcolor{orange}{#1}}
    \nc{\tmag}[1]{\textcolor{magenta}}
    \nc{\btred}[1]{\textcolor{red}{\bf #1}}
    \nc{\btblue}[1]{\textcolor{blue}{\bf #1}}
    \nc{\btgreen}[1]{\textcolor{green}{\bf #1}}
    \nc{\btpurple}[1]{\textcolor{purple}{\bf #1}}

        \nc{\mlabel}[1]{\label{#1}}  
        \nc{\mcite}[1]{\cite{#1}}  
        \nc{\mref}[1]{\ref{#1}}  
        \nc{\meqref}[1]{\eqref{#1}}  
        \nc{\mbibitem}[1]{\bibitem{#1}} 

    \delete{
        \nc{\mlabel}[1]{\label{#1}  
            { {\small\tgreen{\tt{{\ }(#1)}}}}}
        \nc{\mcite}[1]{\cite{#1}{\small{\tt{{\ }(#1)}}}}  
        \nc{\mref}[1]{\ref{#1}{\small{\tred{\tt{{\ }(#1)}}}}}  
        \nc{\meqref}[1]{\eqref{#1}{{\tt{{\ }(#1)}}}}  
        \nc{\mbibitem}[1]{\bibitem[\bf #1]{#1}} 
    }

    \nc{\cm}[1]{\textcolor{red}{Chengming:#1}}
    \nc{\yy}[1]{\textcolor{blue}{Yanyong: #1}}
    \nc{\zy}[1]{\textcolor{yellow}{Zhongyin: #1}}
    \nc{\li}[1]{\textcolor{purple}{#1}}
    \nc{\lir}[1]{\textcolor{purple}{Li:#1}}


    \nc{\tforall}{\ \ \text{for all }}
    \nc{\hatot}{\,\widehat{\otimes} \,}
    \nc{\complete}{completed\xspace}
    \nc{\wdhat}[1]{\widehat{#1}}

    \nc{\ts}{\mathfrak{p}}
    \nc{\mts}{c_{(i)}\ot d_{(j)}}

    \nc{\NA}{{\bf NA}}
    \nc{\LA}{{\bf Lie}}
    \nc{\CLA}{{\bf CLA}}

    \nc{\cybe}{CYBE\xspace}
    \nc{\nybe}{NYBE\xspace}
    \nc{\ccybe}{CCYBE\xspace}
    \nc{\rk}{\mathrm{r}}
    \nc{\ndend}{pre-Novikov\xspace}
    \newcommand{\B}{\mathfrak{B}}
    \newcommand{\T}{\mathfrak{T}}
    \newcommand{\A}{\mathcal{A}}
    \newcommand{\C}{\mathfrak{C}}
    \newcommand{\g}{\mathfrak g}
    \newcommand{\h}{\mathfrak h}
    \newcommand{\cP}{\mathfrak{P}}
    \newcommand{\Wh}{\widehat}
    \newcommand{\id}{\mathrm{id}}

    \newcommand{\pf}{\noindent{$Proof$.}\ }
    \newcommand{\gl}{\mathfrak {gl}}
    \newcommand{\ad}{\mathfrak{ad}}
    \newcommand{\D}{\mathcal{D}}
    \newcommand{\frka}{\mathfrak a}
    \newcommand{\frkb}{\mathfrak b}
    \newcommand{\frkc}{\mathfrak c}
    \newcommand{\frkd}{\mathfrak d}
    \newcommand{\rb}{ \rho_{\mathfrak{B},\mathfrak{B}'}}
    \newcommand{\sib}{ \sigma_{\mathfrak{B},\mathfrak{B}'}}
    \newcommand {\comment}[1]{{\marginpar{*}\scriptsize\textbf{Comments:} #1}}


    \nc{\disp}[1]{\displaystyle{#1}}
    \nc{\bin}[2]{ (_{\stackrel{\scs{#1}}{\scs{#2}}})}  
    \nc{\binc}[2]{ \left (\!\! \begin{array}{c} \scs{#1}\\
            \scs{#2} \end{array}\!\! \right )}  
    \nc{\bincc}[2]{  \left ( {\scs{#1} \atop
            \vspace{-.5cm}\scs{#2}} \right )}  
    \nc{\ot}{\otimes}
    \nc{\sot}{{\scriptstyle{\ot}}}
    \nc{\otm}{\overline{\ot}}
    \nc{\ola}[1]{\stackrel{#1}{\la}}

    \nc{\scs}[1]{\scriptstyle{#1}} \nc{\mrm}[1]{{\rm #1}}

    \nc{\dirlim}{\displaystyle{\lim_{\longrightarrow}}\,}
    \nc{\invlim}{\displaystyle{\lim_{\longleftarrow}}\,}

    \nc{\bfk}{{\bf k}} \nc{\bfone}{{\bf 1}}
    \nc{\rpr}{\circ}
    \nc{\dpr}{{\tiny\diamond}}
    \nc{\rprpm}{{\rpr}}

    \nc{\mmbox}[1]{\mbox{\ #1\ }} \nc{\ann}{\mrm{ann}}
    \nc{\Aut}{\mrm{Aut}} \nc{\can}{\mrm{can}}
    \nc{\twoalg}{{two-sided algebra}\xspace}
    \nc{\colim}{\mrm{colim}}
    \nc{\Cont}{\mrm{Cont}} \nc{\rchar}{\mrm{char}}
    \nc{\cok}{\mrm{coker}} \nc{\dtf}{{R-{\rm tf}}} \nc{\dtor}{{R-{\rm
                tor}}}
    \renewcommand{\det}{\mrm{det}}
    \nc{\depth}{{\mrm d}}
    \nc{\End}{\mrm{End}} \nc{\Ext}{\mrm{Ext}}
    \nc{\Fil}{\mrm{Fil}} \nc{\Frob}{\mrm{Frob}} \nc{\Gal}{\mrm{Gal}}
    \nc{\GL}{\mrm{GL}} \nc{\Hom}{\mrm{Hom}} \nc{\hsr}{\mrm{H}}

    \nc{\incl}{\mrm{incl}} \nc{\length}{\mrm{length}}
    \nc{\LR}{\mrm{LR}} \nc{\mchar}{\rm char} \nc{\NC}{\mrm{NC}}
    \nc{\mpart}{\mrm{part}} \nc{\pl}{\mrm{PL}}
    \nc{\ql}{{\QQ_\ell}} \nc{\qp}{{\QQ_p}}
    \nc{\rank}{\mrm{rank}} \nc{\rba}{\rm{Rota-Baxter A }} \nc{\rbas}{\rm{Rota-Baxter As }}
    \nc{\rbpl}{\mrm{Rota-Baxter PL}}
    \nc{\rbw}{\rm{Rota-Baxter W }} \nc{\rbws}{\rm{Rota-Baxter Ws }} \nc{\rcot}{\mrm{cot}}
    \nc{\rest}{\rm{controlled}\xspace}
    \nc{\rdef}{\mrm{def}} \nc{\rdiv}{{\rm div}} \nc{\rtf}{{\rm tf}}
    \nc{\rtor}{{\rm tor}} \nc{\res}{\mrm{res}} \nc{\SL}{\mrm{SL}}
    \nc{\Spec}{\mrm{Spec}} \nc{\tor}{\mrm{tor}} \nc{\Tr}{\mrm{Tr}}
    \nc{\mtr}{\mrm{sk}}

    \nc{\ab}{\mathbf{Ab}} \nc{\Alg}{\mathbf{Alg}}

    \nc{\BA}{{\mathbb A}} \nc{\CC}{{\mathbb C}} \nc{\DD}{{\mathbb D}}
    \nc{\EE}{{\mathbb E}} \nc{\FF}{{\mathbb F}} \nc{\GG}{{\mathbb G}}
    \nc{\HH}{{\mathbb H}} \nc{\LL}{{\mathbb L}} \nc{\NN}{{\mathbb N}}
    \nc{\QQ}{{\mathbb Q}} \nc{\RR}{{\mathbb R}} \nc{\BS}{{\mathbb{S}}} \nc{\TT}{{\mathbb T}}
    \nc{\VV}{{\mathbb V}} \nc{\ZZ}{{\mathbb Z}}


    \nc{\calao}{{\mathcal A}} \nc{\cala}{{\mathcal A}}
    \nc{\calc}{{\mathcal C}} \nc{\cald}{{\mathcal D}}
    \nc{\cale}{{\mathcal E}} \nc{\calf}{{\mathcal F}}
    \nc{\calfr}{{{\mathcal F}^{\,r}}} \nc{\calfo}{{\mathcal F}^0}
    \nc{\calfro}{{\mathcal F}^{\,r,0}} \nc{\oF}{\overline{F}}
    \nc{\calg}{{\mathcal G}} \nc{\calh}{{\mathcal H}}
    \nc{\cali}{{\mathcal I}} \nc{\calj}{{\mathcal J}}
    \nc{\call}{{\mathcal L}} \nc{\calm}{{\mathcal M}}
    \nc{\caln}{{\mathcal N}} \nc{\calo}{{\mathcal O}}
    \nc{\calp}{{\mathcal P}} \nc{\calq}{{\mathcal Q}} \nc{\calr}{{\mathcal R}}
    \nc{\calt}{{\mathcal T}} \nc{\caltr}{{\mathcal T}^{\,r}}
    \nc{\calu}{{\mathcal U}} \nc{\calv}{{\mathcal V}}
    \nc{\calw}{{\mathcal W}} \nc{\calx}{{\mathcal X}}
    \nc{\CA}{\mathcal{A}}

    \nc{\fraka}{{\mathfrak a}} \nc{\frakB}{{\mathfrak B}}
    \nc{\frakb}{{\mathfrak b}} \nc{\frakd}{{\mathfrak d}}
    \nc{\oD}{\overline{D}}
    \nc{\frakF}{{\mathfrak F}} \nc{\frakg}{{\mathfrak g}}
    \nc{\frakm}{{\mathfrak m}} \nc{\frakM}{{\mathfrak M}}
    \nc{\frakMo}{{\mathfrak M}^0} \nc{\frakp}{{\mathfrak p}}
    \nc{\frakS}{{\mathfrak S}} \nc{\frakSo}{{\mathfrak S}^0}
    \nc{\fraks}{{\mathfrak s}} \nc{\os}{\overline{\fraks}}
    \nc{\frakT}{{\mathfrak T}}
    \nc{\oT}{\overline{T}}
    \nc{\frakX}{{\mathfrak X}} \nc{\frakXo}{{\mathfrak X}^0}
    \nc{\frakx}{{\mathbf x}}
    \nc{\frakTx}{\frakT}      
    \nc{\frakTa}{\frakT^a}        
    \nc{\frakTxo}{\frakTx^0}   
    \nc{\caltao}{\calt^{a,0}}   
    \nc{\ox}{\overline{\frakx}} \nc{\fraky}{{\mathfrak y}}
    \nc{\frakz}{{\mathfrak z}} \nc{\oX}{\overline{X}}

    \font\cyr=wncyr10


    \title[Factorizable Lie conformal algebras]{Factorizable Lie conformal bialgebras, quadratic Rota-Baxter Lie conformal algebras and some induced structures}

    \author{Zhongyin Xu}
    \address{Chern Institute of Mathematics \& LPMC, Nankai University,
        Tianjin, 300071, China}
    \email{zy\_xu@mail.nankai.edu.cn}

    \author{Chengming Bai}
    \address{Chern Institute of Mathematics \& LPMC, Nankai University, Tianjin 300071, PR China}
    \email{baicm@nankai.edu.cn}

    \author{Yanyong Hong}
    \address{School of Mathematics, Hangzhou Normal University,
        Hangzhou, 311121, China}
    \email{yyhong@hznu.edu.cn}

    \subjclass[2010]{17A30, 17A60, 17B60, 17B69, 17D25}
    \keywords{Lie conformal bialgebra, Gel'fand-Dorfman bialgebra, factorizable Lie conformal bialgebra, Rota-Baxter Lie conformal algebra, factorizable Gel'fand-Dorfman bialgebra, Rota-Baxter Gel'fand-Dorfman algebra }
    \begin{abstract}
        We introduce the notion of factorizable Lie conformal bialgebras  and establish their correspondence with quadratic Rota-Baxter Lie conformal algebras of nonzero weight.
Consequently, on the one hand, a Lie conformal bialgebra  with a
Rota-Baxter operator of nonzero weight gives a factorizable Lie
conformal bialgebra. And on the other hand, as the conformal
analogue of the fact that a quadratic Rota-Baxter Lie algebra
induces a generalized pseudo-Hessian post-Lie algebra and a
special partial-pre-post-Lie algebra, a quadratic Rota-Baxter Lie
conformal algebra as well as a factorizable Lie conformal
bialgebra induces a generalized pseudo-Hessian post-Lie conformal
algebra and a special partial-pre-post-Lie conformal algebra.
Furthermore, we generalize the correspondence between
Gel'fand-Dorfman bialgebras and  a class of Lie conformal
bialgebras to the factorizable cases. There is not only a
correspondence between factorizable Gel'fand-Dorfman bialgebras
and a class of factorizable Lie conformal bialgebras, but also a
correspondence for their corresponding quadratic Rota-Baxter
counterparts as well as some induced structures. In particular,
there is a construction of factorizable Lie conformal bialgebras
from Gel'fand-Dorfman bialgebras  with a Rota-Baxter operator of
nonzero weight.
    \end{abstract}

    \maketitle
    \tableofcontents

    \section{Introduction}
    The notion of Lie conformal algebras was introduced by V. Kac in \cite{K1,K2} as a formal language describing the algebraic properties of the singular
    parts of operator product expansions of chiral fields in two-dimensional conformal field theory. They are closely related to vertex algebras \cite{K1}, infinite-dimensional Lie algebras that satisfy
    the locality property \cite{K3}, and the Hamiltonian formalism used in the theory of nonlinear evolution equations \cite{BDK}. As the conformal analog of Lie bialgebras,  Lie conformal bialgebras were introduced  in \cite{L}, together with conformal Manin triples, and the conformal classical Yang-Baxter equation (CCYBE).

    The classical Yang-Baxter equation (CYBE), which is closely related to classical integrable systems and quantum groups \cite{CP,STS},  giving the notion of quasi-triangular
     Lie bialgebras.  In addition, factorizable Lie bialgebras introduced in \cite{RS} are a special class of quasi-triangular Lie bialgebras, which are used to establish the
     relation between the solutions of the CYBE and certain factorization problems in integrable systems \cite{STS2}.

    The notion of Rota-Baxter operators on associative algebras was introduced in the probability study of G. Baxter in \cite{B}. Motivated by the connections in combinatorics \cite{R}, 
    quantum field theory \cite{CK2}, noncommutative symmetric functions \cite{YGT}, 
    Poisson geometry \cite{U,S} 
    and operads \cite{BBGN}, 
    the study of Rota-Baxter operators has
    become increasingly active in recent years. As a remarkable coincidence, Rota-Baxter operators on Lie algebras also arose independently as the  operator form
    of the CYBE. Moreover, there is a correspondence
    between factorizable Lie bialgebras and quadratic Rota-Baxter Lie algebras of nonzero weight
    \cite{LS}. In particular, a Rota-Baxter
Lie bialgebra with a nonzero weight \cite{LS} gives a factorizable
Lie bialgebra. Note that there is a notion of Rota-Baxter Lie
bialgebras with a pair of operators satisfying certain admissible
conditions \cite{BGLM}, including the one in \cite{LS} as a
special case. Therefore, it is natural to consider the conformal
analog of
 these results, which is the first motivation of
this paper.

 Furthermore, Novikov algebras are a special class of pre-Lie algebras (or left-symmetric algebras), introduced in connection with Hamiltonian operators in the formal variational calculus \cite{GD1,GD2}
 and Poisson brackets of hydrodynamic type \cite{BN}. A Gel'fand-Dorfman algebra (GDA) is a vector space equipped with a Novikov algebra structure and a Lie algebra structure satisfying a compatibility condition.
 Note that Novikov algebras are GDAs with the trivial Lie algebra structure. GDAs correspond to Hamiltonian pairs, which are fundamental in the study of completely integrable systems \cite{GD1}. 
One of the important roles of GDAs is illustrated by their
correspondence with a class of Lie conformal algebras \cite{X}.
Such a relationship is further lifted to the level of bialgebras
in \cite{HBG} by developing a bialgebra theory  of
Gel'fand-Dorfman algebras, namely  Gel'fand-Dorfman bialgebras
(GDBAs). Therefore,        it is also natural to consider this
correspondence in the factorizable case, which is the second
motivation of this paper.

   For the former, we introduce the notion of factorizable Lie conformal bialgebras and establish their correspondence with quadratic Rota-Baxter Lie conformal algebras of nonzero
   weight. There are two important consequences. One is that, due
   to the aforementioned conformal Manin triple interpretation of
   Lie conformal bialgebras, a Lie conformal bialgebra with a Rota-Baxter operator of nonzero
   weight naturally gives a quadratic Rota-Baxter conformal
   algebra and hence a factorizable Lie conformal bialgebra.
   Another one is involved the induced structures. The actions of
   Rota-Baxter operators on an algebra structure are to split the operations
   of the algebra structure into the sum of two operations or
   three operations, that is, to give the bisuccessors or
   trisuccessors in the sense of operads \cite{BBGN}. For example,
a Rota-Baxter Lie algebra of  weight 1 induces a
   post-Lie algebra. In addition,
a quadratic Rota-Baxter Lie algebra of weight 1 induces a
generalized pseudo-Hessian post-Lie algebra, which finally gives a
special partial-pre-post-Lie algebra \cite{LBG}. As the conformal
analog, a quadratic Rota-Baxter Lie conformal algebra of
   weight 1 also induces  a generalized
pseudo-Hessian post-Lie conformal algebra and a special
partial-pre-post-Lie conformal algebra. Hence a factorizable Lie
conformal bialgebra induces these two structures, too.


For the latter,  we introduce the notion of factorizable GDBAs and
show that they correspond to a class of factorizable Lie conformal
bialgebras. Similarly, quadratic Rota-Baxter GDAs of nonzero
weight correspond to a class of quadratic Rota-Baxter Lie
conformal algebras of the same weight. Moreover, there is also a
correspondence between factorizable GDBAs and quadratic
Rota-Baxter GDAs of nonzero weight so that the following diagram
is commutative:
 \begin{equation*}
        \hspace*{-1.5cm}%
        \scalebox{0.85}{
            {\small \xymatrix@C=1.5cm{
                    &\txt{ factorizable GDBAs}\ar[d]\ar@<-1ex>[r]
                    & \txt{  quadratic    Rota-Baxter GDAs of nonzero weight    }  \ar[d]  \ar[l]             \\
                    &\txt{a class of factorizable \\Lie conformal bialgebras}  \ar[u]\ar@<-1ex>[r]
                    &\txt{a class of  quadratic    Rota-Baxter\\ Lie conformal algebras of the same weight } \ar[u] \ar[l]  }               }}
    \end{equation*}
In particular, a GDBA with a Rota-Baxter operator of nonzero weight
gives a quadratic Rota-Baxter GDA as well as a Lie conformal
bialgebra with a Rota-Baxter operator, which both give rise to the
same factorizable Lie conformal bialgebra.

Additionally, as the combination of  above two approaches, for
factorizable GDBAs and their corresponding quadratic Rota-Baxter
GDAs of nonzero weight, we also consider the induced structures
and their corresponding conformal structures. We illustrate these
correspondences between these induced structures from factorizable
Novikov bialgebras, which are a special case of factorizable
GDBAs, and those induced from the corresponding class of
factorizable Lie conformal bialgebras.

    The paper is organized as follows. In Section \ref{2}, we recall some basic facts on Lie conformal algebras as well as Lie conformal bialgebras.
    In Section \ref{3}, we introduce the notion of factorizable Lie conformal bialgebras and establish a one-to-one correspondence with quadratic Rota-Baxter Lie conformal algebras of nonzero weight.
    We construct a quadratic Rota-Baxter Lie conformal algebra from a Lie conformal bialgebra with a Rota-Baxter operator of nonzero weight, which produces a factorizable Lie conformal bialgebra.
    We also show that factorizable Lie conformal bialgebras induce generalized pseudo-Hessian post-Lie conformal algebras and special partial-pre-post-Lie conformal algebras.
    In Section \ref{4},  we  introduce the notion of factorizable GDBAs and establish their correspondence with a  class of factorizable Lie conformal bialgebras.
    There is also a correspondence between quadratic  Rota-Baxter GDAs of nonzero weight and a class of quadratic Rota-Baxter Lie conformal algebras of the same weight.
In particular, there is a construction of factorizable GDBAs and
hence factorizable Lie conformal bialgebras from GDBAs with a
Rota-Baxter operator of nonzero weight. Moreover, factorizable
Novikov bialgebras induce post-Novikov algebras and special
partial-pre-post-Novikov algebras which correspond respectively to
certain classes of post-Lie conformal algebras and
partial-pre-post-Lie conformal algebras.

    Throughout this paper, let  $\bf k$ be a field of characteristic zero. All tensors over ${\bf k}$ are denoted by $\otimes$. We denote the identity map by $\id$. All vector spaces and algebras over ${\bf k}$ (resp. all ${\bf k}[\partial]$-modules and Lie conformal algebras) are assumed to be finite-dimensional (resp. finite) unless otherwise stated, even though many results still hold in the infinite-dimensional (resp. infinite) cases. Moreover, if $A$ is a vector space, then we denote the space of polynomials of $\lambda$ with coefficients in $A$ by $A[\lambda]$. For a vector space or a ${\bf k}[\partial]$-module $A$, let
    \begin{equation*}
        \tau:A\otimes A\rightarrow A\otimes A,\qquad a\otimes b\mapsto b\otimes a,\qquad \forall a,b\in A,
    \end{equation*}
    be the flip operator. Then we denote $\tau_{12} = \tau \otimes \id$, $ \tau_{23} = \id \otimes \tau$, $\tau_{13} = (\tau\otimes \id)(\id \otimes \tau)(\tau \otimes \id)$.

    \section{Some basic facts on Lie conformal bialgebras}\label{2}
    We  recall some basic facts on Lie conformal algebras and Lie conformal bialgebras.


    \begin{definition}\cite{K1}\label{lca}
        A {\bf Lie conformal algebra} $(\A,[\cdot_\lambda\cdot])$ is a ${\bf k}[\partial]$-module $\A$ with a $\lambda$-bracket $[\cdot_\lambda\cdot]:\A\times \A\rightarrow \A[\lambda]$ which is a
        ${\bf k}$-bilinear map  satisfying
        \begin{align}
            &[\partial a_\lambda b]=-\lambda[a_\lambda b],   \quad\quad\quad\quad   [ a_\lambda \partial b]=(\lambda+\partial)[a_\lambda b],\tag{conformal sesquilinearity} \label{cs}\\
            &[a_\lambda b]=-[b_{-\lambda-\partial}a],\tag{skew-symmetry}\\
            &[a_\lambda[b_\mu c]]=[[a_\lambda b]_{\lambda+\mu}c]+[b_\mu[a_\lambda c]], \qquad \forall a,b,c\in \A.  \tag{Jacobi identity}   \label{li}
        \end{align}
        A Lie conformal algebra $(\A,[\cdot_\lambda\cdot])$ is called {\bf finite}, if it is finitely generated as a ${\bf k}[\partial]$-module. Otherwise, we call it {\bf infinite}.
    \end{definition}

    \begin{definition}\cite{K1}
        Let $U$ and $V$ be two ${\bf k}[\partial]$-modules. A {\bf conformal linear map} from $U$ to $V$ is a ${\bf k}$-linear map $f:U\rightarrow V[\lambda]$, denoted by $f_\lambda:U\rightarrow V$,   such that  $f_\lambda(\partial u)=(\lambda+\partial) f_\lambda u$, for all $u\in U$.
        The vector space of all such conformal linear maps, denoted by $\text{Chom}(U,V)$, has a $ {\bf k}[\partial]$-module structure given by
        \begin{equation*}
            (\partial f)_\lambda u=-\lambda f_\lambda u,\qquad \forall f\in \text{Chom}(U,V).
        \end{equation*}
        In particular, if $U=V$, then we denote $\text{Chom}(V,V)$ by  $\text{Cend}(V)$. If $V$ is a finitely generated  $ {\bf k}[\partial]$-module, then  $\text{Cend}(V)$ has a Lie conformal  algebra structure defined by
        $$
        [f_\lambda g]_\mu v=f_\lambda(g_{\mu-\lambda}v)-g_{\mu-\lambda}(f_\lambda v),\qquad \forall f,g\in \text{Cend}(V),v\in V.
        $$
        In this case, we denote $(\text{Cend}(V),[\cdot_\lambda\cdot])$ by $\text{gc}(V)$, which is called the {\bf general Lie conformal algebra}  on $V$.
        We define the {\bf conformal dual} of a $ {\bf k}[\partial]$-module $U$ as $ U^{*c}=\text{Chom}(U,{\bf k})$, where ${\bf k}$ is viewed as the trivial ${\bf k}[\partial]$-module, that is,
        $U^{*c}=\{f: U\rightarrow {\bf k}\;| \text{ ${\bf k}$-linear and }f_\lambda(\partial b)=\lambda f_\lambda b \;\;\text{for all $b\in U$}\} $.

        Let $U$, $V$ and $W$ be ${\bf k} [\partial]$-modules.   A {\bf conformal bilinear map}   $f_\lambda(\cdot,\cdot): U\times V\rightarrow W[\lambda]$  is a ${\bf k}$-bilinear map satisfying that $f_\lambda(\partial u,v)=-\lambda f_\lambda(u,v)$ and $f_\lambda( u,\partial v)=(\lambda+\partial) f_\lambda(u,v) $ for all $u\in U$ and $v\in V$.
    \end{definition}

    For convenience, in the sequel, we define
    \begin{eqnarray*}
        \langle x,a\rangle_\lambda=\langle a, x\rangle_{-\lambda}=x_\lambda a,\qquad
        \langle x\otimes y, a\otimes b\rangle_{\lambda, \mu}=\langle x, a\rangle_\lambda \langle y,b\rangle_\mu,\;\;\forall a, b\in \A,\;\;x, y\in \A^{\ast c}.
    \end{eqnarray*}

    \begin{definition}\cite{K1}
        Let $M$ be a finitely generated ${\bf k}[\partial]$-module and $(\A,[\cdot_\lambda \cdot])$ be a Lie conformal algebra. $(M, \rho)$ is called a {\bf  representation} of $(\A,[\cdot_\lambda \cdot])$ if $\rho: \A\rightarrow \text{gc}(M)$ is a homomorphism of Lie conformal algebras.
    \end{definition}

    \begin{proposition}\cite{L}\label{prop-dual}
        Let $(\A,[\cdot_\lambda \cdot])$ be a Lie conformal algebra and $(M, \rho)$ be a representation of $(\A,[\cdot_\lambda \cdot])$, where $M$ is free as a ${\bf k}[\partial]$-module. Let $\rho^*$ be a $\mathbf{k}[\partial]$-module homomorphism from $\A$ to $\mathrm{Cend}~(M^{*c})$ given by
        \begin{eqnarray*}
            (\rho^\ast(a)_\lambda \varphi)_\mu u=-\varphi_{\mu-\lambda}(\rho(a)_\lambda u),\;\;\;\forall a\in \A, \varphi\in M^{\ast c}, u\in M.
        \end{eqnarray*}
        Then   $(M^{\ast c}, \rho^\ast)$ is a representation of $(\A,[ \cdot_\lambda\cdot])$, which is call the {\bf  dual representation}  of $(\A,[ \cdot_\lambda\cdot])$.
         \end{proposition}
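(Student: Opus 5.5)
To prove Proposition~\ref{prop-dual}, I check two things in turn. First, for each $a\in\A$, $\rho^*(a)$ is a well-defined element of $\mathrm{Cend}(M^{*c})$, and $\rho^*$ is a $\mathbf{k}[\partial]$-module map. Second, $\rho^*$ respects $\lambda$-brackets:
\[
\rho^*([a_\lambda b])_{\lambda+\mu}\varphi=\rho^*(a)_\lambda(\rho^*(b)_\mu\varphi)-\rho^*(b)_\mu(\rho^*(a)_\lambda\varphi).
\]
Here I use the standard identification $\rho^*([a_\lambda b])_{\lambda+\mu}=[\rho^*(a)_\lambda\rho^*(b)]_{\lambda+\mu}$ in $\mathrm{gc}(M^{*c})$, which comes from the bracket formula $[f_\lambda g]_\mu v=f_\lambda(g_{\mu-\lambda}v)-g_{\mu-\lambda}(f_\lambda v)$. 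Freeness (and finite generation) of $M$ is needed so that $M^{*c}$ is again a finitely generated free $\mathbf{k}[\partial]$-module. This makes $\mathrm{gc}(M^{*c})$ a Lie conformal algebra. It also means a conformal linear functional on $M$ is determined by its values on a $\mathbf{k}[\partial]$-basis, so every expression below genuinely lies in $M^{*c}$ and is polynomial in the relevant variables.

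\emph{Well-definedness.} I first show that $(\rho^*(a)_\lambda\varphi)_\mu$ is a conformal linear map $M\to\mathbf{k}$. This reduces to
\[
(\rho^*(a)_\lambda\varphi)_\mu(\partial u)=-\varphi_{\mu-\lambda}\big((\lambda+\partial)\rho(a)_\lambda u\big)=-(\lambda+\mu-\lambda)\varphi_{\mu-\lambda}(\rho(a)_\lambda u),
\]
which equals $\mu(\rho^*(a)_\lambda\varphi)_\mu u$, as required. Next, conformal linearity of $\rho^*(a)_\lambda$ in $\varphi$ means $\rho^*(a)_\lambda(\partial\varphi)=(\lambda+\partial)\rho^*(a)_\lambda\varphi$. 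To check it, evaluate at $\mu$: the left side gives $-(\partial\varphi)_{\mu-\lambda}(\cdots)=(\mu-\lambda)\varphi_{\mu-\lambda}(\cdots)$, and $(\lambda+\partial)$ acting at $\mu$ contributes $\lambda-\mu$, so the two sides agree. Finally, sesquilinearity in $a$, namely $\rho^*(\partial a)_\lambda=-\lambda\rho^*(a)_\lambda$, follows directly from $\rho(\partial a)_\lambda=-\lambda\rho(a)_\lambda$.

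\emph{Homomorphism property.} I evaluate both sides of the displayed identity at $\nu$ on $u\in M$ and unwind the definition twice. For the first term on the right,
\[
\big(\rho^*(a)_\lambda(\rho^*(b)_\mu\varphi)\big)_\nu u=-(\rho^*(b)_\mu\varphi)_{\nu-\lambda}(\rho(a)_\lambda u)=\varphi_{\nu-\lambda-\mu}\big(\rho(b)_\mu\rho(a)_\lambda u\big).
\]
By symmetry, the second term gives $\varphi_{\nu-\lambda-\mu}(\rho(a)_\lambda\rho(b)_\mu u)$. The left side gives $-\varphi_{\nu-\lambda-\mu}(\rho([a_\lambda b])_{\lambda+\mu}u)$. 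Since $\rho$ is a representation, $\rho([a_\lambda b])_{\lambda+\mu}=\rho(a)_\lambda\rho(b)_\mu-\rho(b)_\mu\rho(a)_\lambda$, and the two sides match with the correct signs.

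The main obstacle is bookkeeping rather than anything conceptual. One must keep the shifted spectral parameters consistent, for instance $\mu-\lambda$ versus $\nu-\lambda-\mu$, and correctly interpret the convention $[f_\lambda g]_\mu$ for $\mathrm{gc}$. In particular, one has to confirm that the homomorphism condition in $\mathrm{gc}(M^{*c})$ takes exactly the commutator form used above once the parameter $\mu$ in the definition of $\mathrm{gc}$ is renamed to $\lambda+\mu$. I would state that translation explicitly at the start, and then the computation above is mechanical.
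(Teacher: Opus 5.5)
Your argument is correct: the three well-definedness checks (conformal linearity of $(\rho^*(a)_\lambda\varphi)_\mu$ in $u$, conformal linearity of $\rho^*(a)_\lambda$ in $\varphi$, and $\rho^*(\partial a)_\lambda=-\lambda\rho^*(a)_\lambda$) and the bracket computation at $\nu$ all go through with exactly the signs and shifted parameters you give; the bracket computation reduces to $-\varphi_{\nu-\lambda-\mu}$ applied to $\rho(a)_\lambda\rho(b)_\mu u-\rho(b)_\mu\rho(a)_\lambda u$. The paper gives no proof of its own here, only the citation to \cite{L}, and your direct unwinding of the definitions is the standard way to establish the statement; freeness and finite generation of $M$ are what make $M^{*c}$ a finite free module, so that $\mathrm{gc}(M^{*c})$ is a Lie conformal algebra.
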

        \begin{example}
        	Let $(\A,[\cdot_\lambda \cdot])$ be a Lie conformal algebra where $\A$ is free as a ${\bf k}[\partial]$-module.
        Define $\mathrm{ad}:\A\rightarrow \text{gc}(\A)$ by $\mathrm{ad}(a)_\lambda b=[a_\lambda b]$ for all $a,b\in \A$. Then $(\A,\mathrm{ad})$ is a representation of $(\A,[\cdot_\lambda \cdot])$, called the {\bf adjoint representation}. Moreover, $(\A^{*c},\mathrm{ad}^*)$ is also a representation of $(\A,[\cdot_\lambda \cdot])$, called the {\bf coadjoint representation}. If there is a Lie conformal algebra structure on $\A^{\ast c}$, then we denote the adjoint representation of $(\A^{\ast c}, [\cdot_\lambda \cdot])$ by $(\A^{\ast c}, \ad)$.

        On the other hand, by \cite[Proposition 3.1]{HL1}, $(M,\rho)$ is a representation of a Lie conformal algebra $(\A,[\cdot_\lambda\cdot])$ if and only if there is a Lie conformal algebra structure on the direct sum $\A\oplus M$ as ${\bf k}[\partial]$-modules given by
        \begin{equation}\label{eq-rep}
            [(a,x)_\lambda (b,y)]=([a_\lambda b],\rho(a)_\lambda y-\rho(b)_{-\lambda-\partial}x),\qquad \forall a,b\in\A,x,y\in M.
        \end{equation}
        We denote this Lie conformal algebra by $\A\ltimes_{\rho} M$, which is called the {\bf semi-direct product of $(\A,[\cdot_\lambda\cdot])$ and its representation $(M, \rho)$}.

\end{example}
    Set
   \begin{eqnarray*}
       \partial^{\otimes^2}:=\partial \otimes \id+\id\otimes \partial,\;\;\partial^{\otimes^3}:=\partial\otimes \id\otimes \id+\id\otimes \partial\otimes \id+\id\otimes \id\otimes
     \partial.
    \end{eqnarray*}

    \begin{definition} \cite{L}
        A {\bf Lie conformal coalgebra} $(\A,\Delta)$ is a ${\bf k}[\partial]$-module $\A$  endowed with a ${\bf k}[\partial]$-module homomorphism $\Delta: \A\rightarrow \A\otimes \A$ such that
        \begin{align*}
            \Delta(a)=-\tau\Delta(a),\qquad (\id\otimes \Delta)\Delta(a)-(\tau\otimes \id)(\id\otimes \Delta)\Delta(a)=(\Delta\otimes \id)\Delta(a),\qquad   \forall a\in \A,
        \end{align*}
        where the ${\bf k}[\partial] $-module structure on $\A\otimes \A$ is defined by $\partial(a\otimes b)=(\partial a)\otimes b+a\otimes (\partial b)$ for all $a$, $b\in \A$.

        A {\bf Lie conformal bialgebra}  is a triple $(\A,[\cdot_\lambda \cdot],\Delta)$ such that $(\A,[\cdot_\lambda \cdot])$ is a Lie conformal algebra, $(\A,\Delta)$ is a Lie conformal coalgebra, and they satisfy the following condition.
        \begin{align*}
            \Delta([a_\lambda b])&=      a_\lambda\Delta(b)-b_{-\lambda-\partial}\Delta(a)\\
            &= [a_\lambda b_{(1)}]\otimes b_{(2)}+ b_{(1)}\otimes [a_\lambda b_{(2)}]-[b_{-\lambda-\partial^{\otimes^2}} a_{(1)}]\otimes a_{(2)}- a_{(1)}\otimes [b_{-\lambda-\partial^{\otimes^2}} a_{(2)}],~~~ \forall a,b\in \A,
        \end{align*}
        where we use the Sweedler notation $\Delta(c)=c_{(1)}\otimes c_{(2)}$ (with summation implied) for all $c\in \A$.
       
    \end{definition}

    Recall \cite{HL1} that a {\bf matched pair of Lie conformal algebras}  consists of two  Lie conformal algebras $(\A_1,[\cdot_\lambda \cdot]^1)$ and $(\A_2,[\cdot_\lambda \cdot]^2)$,  together with a representation $(\A_2, \rho)$ of $(\A_1,[\cdot_\lambda \cdot]^1)$ and a representation $(\A_1, \sigma)$ of $(\A_2,[\cdot_\lambda \cdot]^2)$ such that
    \begin{align*}
        &\rho(a)_\lambda[x_\mu y]^2=[(\rho(a)_\lambda x)_{\lambda+\mu }y]^2+[x_\mu(\rho(a)_\lambda y)]^2-\rho(\sigma(x)_{-\lambda-\partial }a)_{\lambda+\mu}y+\rho(\sigma(y)_{-\lambda-\partial}a)_{-\mu-\partial}x,\\
        &\sigma(x)_{-\lambda-\mu-\partial}[a_\lambda b]^1=[a_\lambda(\sigma(x)_{-\mu-\partial}b)]^1-[b_\mu(\sigma(x)_{-\lambda-\partial} a)]^1-\sigma(\rho(a)_{\lambda }x)_{-\mu-\partial}b+\sigma(\rho(b)_{\mu}x)_{-\lambda-\partial}a,
    \end{align*}
    for all $a,b\in \A_1$ and $x,y\in \A_2$. Denote it by $(\A_1,\A_2, \rho,\sigma)$. Note that $(\A_1,\A_2, \rho,\sigma)$ is a matched pair of Lie conformal algebras if and only if there is a Lie conformal algebra structure on the direct sum $\A_1\oplus \A_2$ as ${\bf k}[\partial]$-modules given by
    \begin{equation}\label{mplb}
        [(a,x)_\lambda (b,y)]^\Join=([a_\lambda b]^1+\sigma(x)_\lambda b-\sigma(y)_{-\lambda-\partial}a,[x_\lambda y]^2+\rho(a)_\lambda y-\rho(b)_{-\lambda-\partial}x)
    \end{equation}
    for all $a,b\in \A_1$ and $x,y\in \A_2$. Denote this Lie conformal algebra by $\A_1\Join \A_2$.

    \begin{definition}\cite{BKL}
        Let $M$ be a ${\bf k}[\partial]$-module. A {\bf conformal bilinear form} on $M$  is a conformal bilinear map $\omega_\lambda(\cdot,\cdot):M\times M\rightarrow {\bf k}[\lambda]$, where ${\bf k}$ is seen as the trivial ${\bf k}[\partial]$-module. A conformal bilinear form $\omega_\lambda(\cdot,\cdot)$  on $M$ is called {\bf symmetric } if
        \begin{equation*}
            \omega_\lambda(a,b)=\omega_{-\lambda}(b,a),\qquad  \forall  a,b\in M.
        \end{equation*}
        A conformal bilinear form $\omega_\lambda(\cdot,\cdot)$ on a Lie conformal algebra $(\A,[\cdot_\lambda\cdot])$ is called {\bf invariant} if
        \begin{equation}\label{cbf1}
            \omega_\lambda([a_\mu b],c)=\omega_{\mu}(a,[b_{\lambda-\mu}c]),\qquad \forall a,b,c\in \A.
        \end{equation}
    \end{definition}

    Recall \cite{L} that a  {\bf conformal Manin triple} of Lie conformal algebras is a triple $(\A, \A_1,\A_2)$ of Lie conformal algebras  where $\A_1$, $\A_2$ are  Lie conformal subalgebras of $\A$ and $\A = \A_1\oplus \A_2$ as {\bf k}$[\partial]$-modules is equipped with a non-degenerate invariant symmetric conformal bilinear form  $\omega_\lambda(\cdot,\cdot)$ such that  $\A_1$ and $\A_2$ are isotropic with respect to $\omega_\lambda(\cdot,\cdot)$, that is, $\omega_\lambda(\A_i,\A_i)=0$  for $i =  1,2$.

    \begin{proposition}\cite[Theorem 3.9]{L}, \cite[Proposition 4.18]{HBG} \label{LCB-equ}
        Let $(\A, [\cdot_\lambda \cdot])$ be a  Lie conformal algebra and $(\A, \Delta)$ be a Lie conformal coalgebra, where $\A$ is free as a ${\bf k}[\partial]$-module. Suppose that there is a
        Lie conformal algebra  $(\A^{\ast c}, [\cdot_\lambda \cdot]^\ast)$  in which $[\cdot_\lambda \cdot]^\ast
        $ is obtained from $\Delta$ via
      \begin{equation}\label{eq:dual}\langle [x_\mu y]^\ast, a\rangle_\lambda=\langle  x\otimes y, \Delta(a)\rangle_{\mu, \lambda-\mu},\qquad \forall x, y\in \A^{\ast c}, a\in \A.\end{equation} Then the following conditions are equivalent.
        \begin{enumerate}
            \item \mlabel{it:11}$(\A, [\cdot_\lambda \cdot], \Delta)$ is a Lie conformal
            bialgebra.
            \item \mlabel{it:12} $(\A\oplus \A^{\ast c}, \A, \A^{\ast c})$ is a conformal Manin triple of Lie conformal algebras with the conformal bilinear form given by
            \begin{eqnarray}\label{eq-blf}
                \omega_\lambda ( (a,x), (b,y) )=\langle x,b\rangle_\lambda+\langle y,a\rangle_{-\lambda}, \qquad\forall a, b\in \A, x, y \in \A^{\ast c}.
            \end{eqnarray}
            \item \mlabel{it:13} $(\A, \A^{\ast c}, {\rm ad}^\ast, \ad^\ast)$ is a matched pair of Lie conformal algebras.
        \end{enumerate}
    \end{proposition}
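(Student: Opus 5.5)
We prove the equivalence in two steps: (b)$\Leftrightarrow$(c) by a general conformal Manin-triple fact, and (a)$\Leftrightarrow$(c) by unwinding the matched-pair identities through the pairing. Throughout, $\A$ is free of finite rank, so $\A^{\ast c}$ is free and the pairing $\langle\cdot,\cdot\rangle_\lambda$ is non-degenerate. This is what allows an identity in $\A$ (resp.\ $\A^{\ast c}$) to be tested by pairing against all of $\A^{\ast c}$ (resp.\ $\A$).

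\textbf{(b)$\Leftrightarrow$(c).} The form \eqref{eq-blf} is symmetric, non-degenerate, conformal bilinear, and makes $\A$ and $\A^{\ast c}$ isotropic; all of this is immediate from the definitions. Hence (b) amounts to saying that $\A\oplus\A^{\ast c}$ carries a Lie conformal algebra structure containing $\A$ and $\A^{\ast c}$ as subalgebras, with $\omega$ invariant.

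Suppose such a structure exists. Invariance together with isotropy forces the mixed brackets to be the coadjoint ones. Indeed, write $[a_\lambda x]=(p,q)$ with $p\in\A$, $q\in\A^{\ast c}$. Pairing with $b\in\A$ and using invariance, $\omega_\mu([a_\lambda x],b)$ equals, up to the appropriate shift in $\lambda,\mu$, $-\langle x,[a_\lambda b]\rangle$, which is exactly the defining formula of $\ad^\ast(a)_\lambda x$ in Proposition \ref{prop-dual}. Pairing with $y\in\A^{\ast c}$ identifies $p$ with $-\ad^\ast(x)_{-\lambda-\partial}a$ in the same way. So the bracket on $\A\oplus\A^{\ast c}$ is the bracket \eqref{mplb} with $\rho=\sigma=\ad^\ast$. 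By the stated characterization of matched pairs, $(\A,\A^{\ast c},\ad^\ast,\ad^\ast)$ is then a matched pair.

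Conversely, given the matched pair, \eqref{mplb} defines a Lie conformal algebra on $\A\oplus\A^{\ast c}$. Invariance of $\omega$ is checked componentwise, and on mixed terms it reduces to the definition of $\ad^\ast$ together with invariance of $\langle\cdot,\cdot\rangle$ under the brackets on $\A$ and on $\A^{\ast c}$. The latter is just \eqref{eq:dual}.

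\textbf{(a)$\Leftrightarrow$(c).} Both $(\A^{\ast c},\ad^\ast)$ and $(\A,\ad^\ast)$ are representations by Proposition \ref{prop-dual}, so only the two compatibility identities of a matched pair remain. Take the first identity, with $\rho=\ad^\ast$ of $\A$ acting on $x,y\in\A^{\ast c}$, and pair both sides with an arbitrary $b\in\A$. Then:
\begin{itemize}
\item $\langle\ad^\ast(a)_\lambda[x_\mu y]^\ast,b\rangle$ becomes $-\langle x\otimes y,\Delta([a_\lambda b])\rangle$, after rewriting through \eqref{eq:dual};
\item the terms $[(\ad^\ast(a)_\lambda x)_{\lambda+\mu}y]^\ast$ and $[x_\mu(\ad^\ast(a)_\lambda y)]^\ast$ become $\langle x\otimes y,\,[a_\lambda b_{(1)}]\otimes b_{(2)}+b_{(1)}\otimes[a_\lambda b_{(2)}]\rangle$;
\item the two $\sigma$-terms become the terms involving $\Delta(a)$ with $-\lambda-\partial^{\otimes^2}$.
\end{itemize}
By non-degeneracy, the first identity is then equivalent to the compatibility condition of a Lie conformal bialgebra. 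The second identity is handled the same way, pairing with $y\in\A^{\ast c}$; by skew-symmetry of $\Delta$ and of the brackets it turns out to be equivalent to the same condition.

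The main obstacle is the bookkeeping of spectral parameters. Each use of \eqref{eq:dual} or of the definition of $\ad^\ast$ shifts $\lambda,\mu$ and introduces $\partial$ in the form $-\lambda-\partial$. Those $\partial$ must be converted, via conformal sesquilinearity and the relation $\langle x,\partial a\rangle_\lambda=\lambda\langle x,a\rangle_\lambda$, into the $\partial^{\otimes^2}$ that appears in $b_{-\lambda-\partial^{\otimes^2}}\Delta(a)$. To keep this manageable, I would fix the convention $\langle x\otimes y,\cdot\rangle_{\mu,\nu}$ once and first compute the single term $\langle\ad^\ast(\ad^\ast(x)_{-\lambda-\partial}a)_{\lambda+\mu}y,b\rangle$ explicitly; every other term follows the same pattern.
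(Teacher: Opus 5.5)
The paper does not prove this proposition itself; it is quoted from \cite[Theorem 3.9]{L} and \cite[Proposition 4.18]{HBG}. Your argument is the standard one behind those results, and it is correct: isotropy plus invariance forces the mixed brackets on $\A\oplus\A^{\ast c}$ to be the coadjoint ones, which gives (b)$\Leftrightarrow$(c), and pairing the two matched-pair identities against $\A$, resp.\ $\A^{\ast c}$, through \eqref{eq:dual} turns each of them into $\Delta([a_\lambda b])=a_\lambda\Delta(b)-b_{-\lambda-\partial^{\otimes^2}}\Delta(a)$. Only the bookkeeping needs touching up: the two terms in your second bullet carry a minus sign (matching the first bullet), and skew-symmetry of $\Delta$ is needed in the first identity (for the $\sigma(y)$-term, where $y$ ends up paired with $a_{(1)}$), whereas the second identity reduces to the compatibility condition directly.
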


    \begin{definition} Two Lie conformal bialgebras $(\A ,[\cdot_\lambda\cdot],\Delta)$ and $(\A ',[\cdot_\lambda\cdot]',\Delta')$
        are called  {\bf isomorphic} if there exists a Lie conformal algebra isomorphism $\phi: (\A ,[\cdot_\lambda\cdot])\rightarrow (\A ',[\cdot_\lambda\cdot]')$
        satisfying
        \begin{equation*}
            (\phi\otimes\phi)(\Delta(a))=\Delta'(\phi(a)),\qquad  \forall a\in \A.
        \end{equation*}
    \end{definition}
    \begin{remark}
       By Proposition \ref{LCB-equ}, if a  Lie conformal bialgebra $(\A,[\cdot_\lambda\cdot],\Delta)$ is free as ${\bf k}[\partial]$-module, then  $(\A\oplus \A^{\ast c}, \A, \A^{\ast c})$ is a conformal Manin triple of Lie conformal algebras. Thus we also denote such a Lie conformal bialgebra   by $(\A,\A^{*c})$.
        Let $(\A,[\cdot_\lambda \cdot], \Delta)$ and $(\A '$, $[\cdot_\lambda\cdot]'$, $\Delta')$ be two Lie conformal bialgebras, where $\A$ and $\A^{'}$ are free as ${\bf k}[\partial]$-modules.   A ${\bf k}[\partial]$-module isomorphism $\phi: (\A,[\cdot_\lambda \cdot], \Delta) \rightarrow (\A ',[\cdot_\lambda\cdot]',\Delta')$ is an isomorphism of Lie conformal bialgebras if and only if $\phi: (\A, [\cdot_\lambda \cdot])\rightarrow (\A ',[\cdot_\lambda\cdot]')$ and $\phi^\ast: ({\A'}^{\ast c}, {[\cdot_\lambda \cdot]'}^\ast)\rightarrow ({\A}^{\ast c}, {[\cdot_\lambda \cdot]}^\ast)$ are isomorphisms of Lie conformal algebras, where $\langle \phi^\ast(x), a\rangle_\lambda=\langle x, \phi(a)\rangle_\lambda$ for all $x\in {\A'}^{\ast c}$ and $a\in \A$.
    \end{remark}

    Recall the notion of the conformal classical Yang-Baxter equation.
    \begin{definition}\cite{L}
        Let $(\A,[\cdot_\lambda\cdot])$ be a Lie conformal algebra and $r=\sum_i a_i\otimes b_i\in \A\otimes \A$.   The {\bf conformal classical Yang-Baxter equation (CCYBE) in $(\A,[\cdot_\lambda\cdot])$} is
        the following equation
        \begin{eqnarray*}
            [\![r,r]\!]:&=& \sum_{i,j}([{a_i}_\mu a_j]\otimes b_i\otimes b_j|_{\mu=\id\otimes \partial\otimes \id}\\
            &&-a_i\otimes [{a_j}_\mu b_i]\otimes b_j|_{\mu=\id\otimes \id\otimes \partial}-a_i\otimes a_j\otimes [{b_j}_\mu b_i]|_{\mu=\id\otimes \partial\otimes \id})\\
            &\equiv &0 \;\; (\text{mod} ~~\partial \cdot (\A\otimes \A\otimes \A)),
        \end{eqnarray*}
        i.e., $[\![r,r]\!]$ belongs to the image of $\partial$ under the module action of ${\bf k}[\partial]$ on $\A\otimes \A\otimes \A$ given by $ \partial \cdot (a\otimes b\otimes c)=\partial^{\otimes^3}(a\otimes b\otimes c)$ for all $a$, $b$, $c\in \A$.

\end{definition}
 
\begin{definition}
    \cite{L}
    Let $(\A, [\cdot_\lambda \cdot])$ be a Lie conformal algebra and $r=\sum_i a_i\otimes b_i\in \A\otimes \A$. If $(\A, [\cdot_\lambda \cdot], \Delta_r)$ is a Lie conformal bialgebra, where $\Delta_r$ is defined by
    \begin{eqnarray}\mlabel{coboundary}
        \Delta_r(a)=a_{-\partial^{\otimes^2}} r=\sum_i ([a_{-\partial^{\otimes^2}} a_i]\otimes b_i+a_i\otimes [a_{-\partial^{\otimes^2}} b_i]),\qquad\forall a\in \A,
    \end{eqnarray}
    then  $(\A,[\cdot_\lambda\cdot],\Delta_r)$ is called a {\bf coboundary Lie conformal bialgebra}.
\end{definition}
\begin{remark}\label{rmk-11}
Let $r_1$, $r_2\in \A\otimes \A$. By the CCYBE in
$(\A,[\cdot_\lambda\cdot])$ and Eq. (\ref{coboundary}), it is
straightforward to show that if $r_1\equiv
r_2~~\text{mod}~~(\partial\otimes \id+\id\otimes
\partial)(\A\otimes \A)$, then $[\![r_1,r_1]\!]\equiv[\![r_2,r_2]\!]\quad
(\mathrm{mod}~\partial\cdot(\A\otimes \A\otimes \A))$ and
$\Delta_{r_1}=\Delta_{r_2}$.
\end{remark}

 For a Lie conformal algebra $(\A, [\cdot_\lambda \cdot])$,   $r\in \A\otimes
 \A$ is called {\bf $\A$-invariant} if
    \begin{equation*}
        a_{-\partial^{\otimes^2}}(r)=0,\qquad \forall a\in \A.
    \end{equation*}

\begin{proposition}\cite[Theorem 3.4]{L}\label{prop-cob}
    Let $(\A, [\cdot_\lambda \cdot])$ be a Lie conformal algebra and $r\in \A\otimes \A$. Then $(\A, [\cdot_\lambda \cdot], \Delta_r)$ is a coboundary Lie conformal bialgebra if and only if
    $r$ satisfies
    $$a_\lambda [\![r,r]\!]|_{\lambda=\partial^{\otimes^3}}=0,\qquad\forall a\in \A,$$
    and
    the symmetric part of $r$ is {$\A$-invariant}, where  the action of $\A$  on $\A\otimes \A\otimes \A$  is given by
    $$
    a_\lambda (b\otimes c\otimes d)=[a_\lambda b]\otimes c\otimes d +b\otimes [a_\lambda c]\otimes d+b\otimes c\otimes [a_\lambda d],
    \qquad \forall a,b,c,d\in A.
    $$

\end{proposition}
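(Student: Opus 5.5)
The plan is to check the three defining conditions of a Lie conformal bialgebra for $(\A,[\cdot_\lambda\cdot],\Delta_r)$ one by one. Since $\Delta_r(a)=a_{-\partial^{\otimes^2}}r$, the map $\Delta_r$ is a ${\bf k}[\partial]$-module homomorphism by conformal sesquilinearity. I also expect the compatibility condition to hold for every $r$, so the statement reduces to skew-symmetry of $\Delta_r$ and the co-Jacobi identity.

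\emph{Compatibility.} We must show $\Delta_r([a_\lambda b])=a_\lambda\Delta_r(b)-b_{-\lambda-\partial}\Delta_r(a)$. Write $r=\sum_i a_i\otimes b_i$. The left side is $[a_\lambda b]_{-\partial^{\otimes^2}}$ acting on $r$. Applying the Jacobi identity in each tensor leg splits $[[a_\lambda b]_{\nu}a_i]$ into $[a_\lambda[b_{\nu-\lambda}a_i]]-[b_{\nu-\lambda}[a_\lambda a_i]]$ with $\nu=-\partial^{\otimes^2}$, and likewise in the second leg. The resulting terms regroup as $a_\lambda(b_{-\partial^{\otimes^2}}r)$ minus the same expression with $a$ and $b$ interchanged. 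Converting $a_\lambda$ into $b_{-\lambda-\partial}$ with skew-symmetry and sesquilinearity should give exactly the right side. The only delicate point is tracking where $\partial^{\otimes^2}$ acts after the substitution, so that the derivation property holds with the correct shifted spectral parameter.

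\emph{Skew-symmetry.} Since $\tau$ commutes with the diagonal action of $\A$ on $\A\otimes\A$, we have $\Delta_r(a)+\tau\Delta_r(a)=a_{-\partial^{\otimes^2}}(r+\tau r)$. Hence $\Delta_r$ is skew-symmetric if and only if the symmetric part $s=\frac12(r+\tau r)$ is $\A$-invariant, which is exactly the second condition.

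\emph{Co-Jacobi.} Assume $s$ is invariant and set $J(a)=(\id\otimes\Delta_r)\Delta_r(a)-(\tau\otimes\id)(\id\otimes\Delta_r)\Delta_r(a)-(\Delta_r\otimes\id)\Delta_r(a)$. Expanding $\Delta_r$ twice writes $J(a)$ as a sum of terms of the form $[a_\nu[a_{j\,\mu}a_i]]\otimes b_i\otimes b_j$ and analogues, with $\nu,\mu$ replaced by $\partial$-operators on the appropriate legs. Using the Jacobi identity to move the outer $a$-action outward, the terms should reorganise into $a_\lambda[\![r,r]\!]|_{\lambda=\partial^{\otimes^3}}$ (up to sign and the convention for $\lambda$), plus a remainder. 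That remainder should consist of terms in which $a$ acts on $r+\tau r$ placed in two of the three legs, which vanish by invariance of $s$. As a consistency check, changing $[\![r,r]\!]$ by an element of $\partial\cdot(\A\otimes\A\otimes\A)$ does not change $a_\lambda[\![r,r]\!]|_{\lambda=\partial^{\otimes^3}}$, since $a_\lambda(\partial^{\otimes^3}X)=(\lambda+\partial^{\otimes^3})a_\lambda X$ and this vanishes after substituting $\lambda=-\partial^{\otimes^3}$. Matching the paper's convention is therefore a sign bookkeeping issue, consistent with Remark \ref{rmk-11}. Combining the three parts: $\Delta_r$ defines a Lie conformal bialgebra if and only if $s$ is invariant and $J\equiv0$, that is, if and only if both stated conditions hold.

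I expect the co-Jacobi computation to be the main obstacle. There are nine groups of terms whose $\lambda$-parameters become $\partial$-operators on different legs, and these must be matched against the three terms of $[\![r,r]\!]$ with their substitutions $\mu=\id\otimes\partial\otimes\id$ and $\mu=\id\otimes\id\otimes\partial$. I would first carry out the computation in the skew-symmetric case $r=-\tau r$, where the remainder is absent. I would then treat $r=r_-+s$ separately, using $\A$-invariance of $s$ to cancel the cross terms between $r_-$ and $s$ and the $s$–$s$ terms.
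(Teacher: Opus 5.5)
Your plan is correct, and it is the standard direct-verification argument behind the cited source, which the paper quotes without proof. Its three steps are sound: compatibility holds for every $r$ by the Jacobi identity and sesquilinearity, skew-symmetry of $\Delta_r$ is exactly $\A$-invariance of $r+\tau r$, and the co-Jacobi defect equals $\pm a_{-\partial^{\otimes^3}}[\![r,r]\!]$ up to terms that vanish by that invariance.

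One point needs fixing rather than being filed under convention. Your own well-definedness check shows the substitution must be $\lambda=-\partial^{\otimes^3}$, which is also what the paper uses in the proof of Theorem~\ref{qtl}. So the $+$ sign in the displayed statement is a typo: with $\lambda=+\partial^{\otimes^3}$ the condition would not even be independent of the representative of $[\![r,r]\!]$ modulo $\partial\cdot(\A\otimes\A\otimes\A)$.
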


\begin{pdef}
    \cite{L} Let $(\A, [\cdot_\lambda \cdot])$ be a Lie conformal algebra and $r\in \A\otimes \A$. If $r$ is a solution of the CCYBE in $(\A, [\cdot_\lambda \cdot])$ and the
     symmetric part of $r$ is $\A$-invariant, then $(\A, [\cdot_\lambda\cdot], \Delta_r)$ is a Lie conformal bialgebra, which is called a {\bf quasi-triangular Lie conformal bialgebra},
     where $\Delta_r$ is given by Eq. (\ref{coboundary}). In particular, if $r$ is a skew-symmetric solution of the CCYBE in $(\A, [\cdot_\lambda \cdot])$, then $(\A, [\cdot_\lambda\cdot], \Delta_r)$ is called a {\bf triangular Lie conformal bialgebra}.

\end{pdef}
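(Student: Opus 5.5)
The Proposition-Definition is a direct consequence of Proposition \ref{prop-cob}, so the plan is simply to check its two hypotheses. By that result, $(\A,[\cdot_\lambda\cdot],\Delta_r)$ is a Lie conformal bialgebra if and only if the symmetric part of $r$ is $\A$-invariant and
\begin{equation*}
a_\lambda [\![r,r]\!]|_{\lambda=\partial^{\otimes^3}}=0 \qquad \text{for all } a\in\A .
\end{equation*}
The invariance condition is assumed outright. What remains is to show that the CCYBE $[\![r,r]\!]\equiv 0 \pmod{\partial\cdot(\A\otimes\A\otimes\A)}$ forces the displayed condition.

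So I will show that the map
\begin{equation*}
t\mapsto a_\lambda t\,|_{\lambda=\partial^{\otimes^3}}
\end{equation*}
vanishes on $\partial\cdot(\A\otimes\A\otimes\A)$. The key input is that $a_\lambda(\cdot)$ on $\A\otimes\A\otimes\A$ is a derivation action built from conformal sesquilinearity in each slot. Hence
\begin{equation*}
a_\lambda(\partial^{\otimes^3}t)=(\lambda+\partial^{\otimes^3})\,a_\lambda t .
\end{equation*}
To check this on a pure tensor $b\otimes c\otimes d$, use $[a_\lambda \partial b]=(\lambda+\partial)[a_\lambda b]$ in each tensor factor. The $\partial$ acting on the other factors simply passes through, so the terms reassemble into $(\lambda+\partial^{\otimes^3})$ applied to $a_\lambda(b\otimes c\otimes d)$.

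The main point to get right is the sign convention of the substitution. The expression $(\lambda+\partial^{\otimes^3})$ must vanish under the substitution that Proposition \ref{prop-cob} actually uses. That requires the substitution to be $\lambda=-\partial^{\otimes^3}$ with $\partial$ acting on the output, in analogy with $\Delta_r(a)=a_{-\partial^{\otimes^2}}r$. I would read the condition $|_{\lambda=\partial^{\otimes^3}}$ in Proposition \ref{prop-cob} in that sense. Under this reading, the image of $\partial\cdot(\A\otimes\A\otimes\A)$ is killed.

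It follows that the condition in Proposition \ref{prop-cob} depends only on the class of $[\![r,r]\!]$ modulo $\partial\cdot(\A\otimes\A\otimes\A)$. A solution of the CCYBE therefore satisfies it. Proposition \ref{prop-cob} then yields that $(\A,[\cdot_\lambda\cdot],\Delta_r)$ is a Lie conformal bialgebra.

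For the triangular special case, a skew-symmetric $r$ has zero symmetric part, which is trivially $\A$-invariant, so the same argument applies.
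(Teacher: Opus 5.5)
Your proof is correct and is essentially the paper's argument. The statement itself is quoted from \cite{L}, but in the proof of Theorem \ref{qtl} the paper reasons exactly as you do: it applies Proposition \ref{prop-cob} and observes that $[\![r,r]\!]\equiv 0$ modulo $\partial\cdot(\A\otimes\A\otimes\A)$ forces $a_{-\partial^{\otimes^3}}[\![r,r]\!]=0$. Your reading of the substitution as $\lambda=-\partial^{\otimes^3}$ agrees with how the paper applies that proposition there, and your identity $a_\lambda(\partial^{\otimes^3}t)=(\lambda+\partial^{\otimes^3})\,a_\lambda t$ supplies the detail the paper leaves implicit.
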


\section{Factorizable Lie conformal bialgebras via quadratic Rota-Baxter Lie conformal algebras of nonzero weight and some induced structures}\label{3}

We first introduce the notion of factorizable Lie conformal
bialgebras and establish their correspondence with quadratic
Rota-Baxter Lie conformal algebras of nonzero weight. Building on
this correspondence, we then construct factorizable Lie conformal
bialgebras from  Lie conformal bialgebras with a Rota-Baxter
operator of nonzero weight. Finally, we show that generalized
pseudo-Hessian post-Lie conformal algebras and special
partial-pre-post-Lie conformal algebras are induced from
factorizable Lie conformal bialgebras.

\subsection{Factorizable Lie conformal bialgebras}

    Let $(\A, [\cdot_\lambda \cdot])$ be a Lie conformal algebra, where $\A$ is free as a ${\bf k}[\partial]$-module, and $r=\sum_i a_i\otimes b_i\in \A\otimes \A$.    Define two ${\bf k}[\partial]$-module homomorphisms $r_+$, $r_-: \A^{*c}\rightarrow \A$ by
    \begin{equation}
\label{Eq-r}    r_+(x)=\sum_i \langle x,a_i\rangle_{-\partial} b_i,\qquad   r_-(x)=-\sum_i \langle x,b_i\rangle_{-\partial} a_i,\qquad \forall x\in \A^{*c}.
\end{equation}

\begin{remark}\label{rmk-2}
Let $r_1$, $r_2\in \A\otimes \A$. If $r_1\equiv r_2
~~\text{mod}~~(\partial\otimes \id+\id\otimes
\partial)(\A\otimes\A)$, then it is straightforward to show that
${r_1}_+={r_2}_+$ and ${r_1}_-={r_2}_-$.

Let $\{e_1, \ldots, e_n\}$ be a ${\bf k}[\partial]$-basis of $\A$
and $\{e_1^\ast, \ldots, e_n^\ast\}$ be the dual ${\bf
k}[\partial]$-basis of $\A^{\ast c}$. 
Set
$$r=\sum_{i=1}^n\sum_{j=1}^nf_{ij}(\partial\otimes \id,\id\otimes
\partial)e_i\otimes e_j.$$ Then for any $k\in \{1, \ldots, n\}$,
we obtain
\begin{eqnarray}
r_+(e_k^\ast)=\sum_{j=1}^nf_{kj}(-\partial, \partial)e_j,\;\;\;r_-(e_k^\ast)=-\sum_{i=1}^nf_{ik}(\partial, -\partial)e_i.
\end{eqnarray}
Therefore, $r$ $\text{mod}~~(\partial\otimes \id+\id\otimes \partial)(\A\otimes\A)$ is determined by $r_+$ or $r_-$.

By the discussion above, if
$r_+(e_k^\ast)=\sum_{j=1}^ng_{kj}(\partial)e_j$ for $k\in \{1,
\ldots, n\}$ where $g_{kj}(\partial)\in {\bf k}[\partial]$, then
we get $r\equiv \sum_{k=1}^n\sum_{j=1}^ng_{kj}(\id\otimes
\partial)e_k\otimes e_j$ $\text{mod}~~(\partial\otimes
\id+\id\otimes \partial)(\A\otimes\A)$. By Remark \ref{rmk-11}, in
the study of coboundary Lie conformal bialgebras, we choose
$$r=\sum_{k=1}^n\sum_{j=1}^ng_{kj}(\id\otimes \partial)e_k\otimes
e_j=\sum_{k=1}^n e_k\otimes r_+(e_k^*).$$
\end{remark}
 
\begin{proposition}\label{prop-1}
With the assumption above, define a $\lambda$-bracket on $\A^{*c}$ by
\begin{equation}\label{acl}
    [x_\lambda y]^r={\rm ad}^*(r_+(x))_\lambda y-{\rm ad}^*(r_-(y))_{-\lambda-\partial}x,\qquad   \forall x,y\in \A^{*c}.
\end{equation}
Then $(\A^{*c},[\cdot_\lambda\cdot]^r)$ is a Lie conformal algebra if and only if $(\A, \Delta_r)$ is a Lie conformal coalgebra, where $\Delta_r$ is given by Eq. (\ref{coboundary}).
\end{proposition}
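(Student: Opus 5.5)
The approach is to show that the bracket $[\cdot_\lambda\cdot]^r$ in Eq.~(\ref{acl}) is exactly the conformal dual of $\Delta_r$ in the sense of Eq.~(\ref{eq:dual}). That is, we verify
\begin{equation*}
\langle [x_\mu y]^r, a\rangle_\lambda=\langle x\otimes y,\Delta_r(a)\rangle_{\mu,\lambda-\mu},\qquad \forall x,y\in\A^{*c},\ a\in\A.
\end{equation*}
Once this identity holds, the statement reduces to the standard fact that a $\lambda$-bracket on $\A^{*c}$ and a cobracket on the free finite module $\A$ related by Eq.~(\ref{eq:dual}) satisfy the Lie conformal axioms simultaneously. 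Concretely:
\begin{itemize}
\item skew-symmetry of $[\cdot_\lambda\cdot]^r$ is equivalent to $\Delta_r=-\tau\Delta_r$;
\item the Jacobi identity is equivalent to the co-Jacobi identity $(\id\otimes\Delta_r)\Delta_r-(\tau\otimes\id)(\id\otimes\Delta_r)\Delta_r=(\Delta_r\otimes\id)\Delta_r$.
\end{itemize}
Sesquilinearity of $[\cdot_\lambda\cdot]^r$ is automatic, because $r_\pm$ are $\mathbf{k}[\partial]$-module maps and $\mathrm{ad}^*$ is a representation; correspondingly, $\Delta_r$ is a $\mathbf{k}[\partial]$-module map.

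\textbf{Step 1: the duality identity.} Write $r=\sum_i a_i\otimes b_i$ and compute $\langle \mathrm{ad}^*(r_+(x))_\mu y,a\rangle_\lambda$ using the formula of Proposition~\ref{prop-dual}. This gives
\begin{equation*}
-\langle y,[r_+(x)_\mu a]\rangle_{\lambda-\mu}=-\sum_i\langle x,a_i\rangle_{\mu}\,\langle y,[{b_i}_\mu a]\rangle_{\lambda-\mu},
\end{equation*}
where the substitution $-\partial\mapsto$ (conformal variable) comes from sesquilinearity of $\langle\cdot,\cdot\rangle$. Skew-symmetry then turns $-[{b_i}_\mu a]$ into $[a_{-\mu-\partial}b_i]$, and pairing with $y$ at $\lambda-\mu$ evaluates $-\mu-\partial$ to $-\lambda$. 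The result is precisely the term $\langle x\otimes y, a_i\otimes[a_{-\partial^{\otimes 2}}b_i]\rangle_{\mu,\lambda-\mu}$, since $-\partial^{\otimes 2}$ evaluates to $-\lambda$ in this pairing. The second term $-\mathrm{ad}^*(r_-(y))_{-\mu-\partial}x$ is handled symmetrically and yields $\langle x\otimes y,[a_{-\partial^{\otimes2}}a_i]\otimes b_i\rangle_{\mu,\lambda-\mu}$.

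\textbf{Step 2: transfer of the axioms.} Freeness and finiteness of $\A$ make the pairing between $\A^{*c}\otimes\A^{*c}$ (respectively the triple tensor) and $\A\otimes\A$ (respectively $\A^{\otimes3}$) nondegenerate in the polynomial variables. Using a $\mathbf{k}[\partial]$-basis $\{e_k\}$ and its dual basis $\{e_k^*\}$, structure polynomials of the bracket on the dual basis determine the cobracket and conversely. With this dictionary:
\begin{itemize}
\item pairing both sides of skew-symmetry against $a$ gives $\tau$-antisymmetry of $\Delta_r$;
\item pairing the Jacobiator $[x_\lambda[y_\mu z]]-[[x_\lambda y]_{\lambda+\mu}z]-[y_\mu[x_\lambda z]]$ with $a$ at suitable variables gives the pairing of $x\otimes y\otimes z$ with the co-Jacobiator of $\Delta_r(a)$.
\end{itemize}
This is the content already used in Proposition~\ref{LCB-equ}, so I would cite it rather than redo it.

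\textbf{Main obstacle.} The hard step is Step 1. Keeping the conformal variables straight when moving $\partial$ through $\langle\cdot,\cdot\rangle_{-\partial}$ in the definition of $r_\pm$ is error-prone, especially for the $r_-$ term with argument $-\lambda-\partial$. I would handle it on basis elements: take $x=e_k^*$, $y=e_l^*$, $a=e_m$, and use the explicit formulas for $r_\pm(e_k^*)$ from Remark~\ref{rmk-2}, so that every side becomes an explicit polynomial in $\lambda,\mu$ to compare.
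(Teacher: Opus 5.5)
Your proposal is correct and matches the paper's proof. The paper verifies the same identity $\langle [x_\lambda y]^r,a\rangle_\mu=\langle x\otimes y,\Delta_r(a)\rangle_{\lambda,\mu-\lambda}$ (yours with $\lambda,\mu$ swapped) using the same sesquilinearity and skew-symmetry manipulations, then transfers the axioms through the algebra--coalgebra duality. One small correction: Proposition~\ref{LCB-equ} assumes this duality rather than proving it, so Step 2 should instead cite \cite[Proposition 2.13]{L}, which is the reference the paper uses.
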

\begin{proof}
For all $x$, $y\in \A^{*c}$, $a\in \A$ and $r=\sum_i a_i\otimes b_i\in\A\otimes \A$, we have
\begin{eqnarray*}
    \langle [x_\lambda y]^r,a \rangle_\mu&=&\langle {\rm ad}^*(r_+(x))_\lambda y-{\rm ad}^*(r_-(y))_{-\lambda-\partial}x,a\rangle_\mu\\
    &=&-\langle y, [{r_+(x)}_\lambda a]\rangle_{\mu-\lambda}+\langle x, [{r_-(y)}_{-\lambda+\mu} a]\rangle_\lambda\\
    &=&\sum_i\langle  x\otimes y,-a_i\otimes [{b_i}_{\lambda}a ]-[{a_i}_{\mu-\lambda}a]\otimes b_i\rangle_{\lambda,\mu-\lambda}\\
    &=&\sum_i\langle  x\otimes y,a_i\otimes [a_{-\mu}b_i]+[a_{-\mu}a_i]\otimes b_i\rangle_{\lambda,\mu-\lambda}\\
    &=&\langle x\otimes y,\Delta_r(a)\rangle_{\lambda,\mu-\lambda}.
\end{eqnarray*}
Then this conclusion follows directly by
\cite[Proposition 2.13]{L}.
\end{proof}

Let $U$ and $W$ be two ${\bf k}[\partial]$-modules and $T: U\rightarrow W$ be a ${\bf k}[\partial]$-module homomorphism. Define a ${\bf k}[\partial]$-module homomorphism $T^\ast: W^{\ast c}\rightarrow U^{\ast c}$ by
\begin{eqnarray*}
\langle T^{\ast}(f),u\rangle_\lambda=\langle f, T(u)\rangle_\lambda,\qquad  \forall f\in  W^{\ast c},\;\;u\in U.
\end{eqnarray*}

Set
\begin{equation*}
I=r_+-r_-:\A^{*c}\rightarrow \A.
\end{equation*}
Note that $I$ is a ${\bf k}[\partial]$-module homomorphism.  Obviously, if $r$ is skew-symmetric, then $I=0$. 
\begin{lemma}\label{I}
Let $ (\A,[\cdot_\lambda\cdot])$ be a Lie conformal algebra, where
$\A$ is free as a ${\bf k}[\partial]$-module, and $r\in \A\otimes
\A$. Then the following conclusions hold.
\begin{enumerate}
    \item \label{item1}$I=I^*$. In particular, if $I$ is an isomorphism of Lie conformal algebras, then $(I^{-1})^*=(I^*)^{-1}=I^{-1}$.
    \item \label{item2}The symmetric part of $r$ is $\A$-invariant if and only if
   \begin{equation}\label{eq:rr1}
   	 I\circ {\rm ad}^*(a)_\lambda={\rm ad}(a)_\lambda\circ I,\qquad \forall a\in \A.
   	 \end{equation}
 
\end{enumerate}
\end{lemma}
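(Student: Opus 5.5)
We will work directly with the pairing $\langle\cdot,\cdot\rangle_\lambda$ and the explicit formulas (\ref{Eq-r}). For (\ref{item1}), the plan is to compute $\langle I(x), y\rangle$ against $\langle x, I(y)\rangle$. Viewing $I$ as a map $\A^{*c}\to \A=(\A^{*c})^{*c}$ (using that $\A$ is free of finite rank), $I^*:\A^{*c}\to\A$ is characterized by $\langle I^*(x),y\rangle_\lambda=\langle x, I(y)\rangle_\lambda$ with the convention $\langle x,a\rangle_\lambda=\langle a,x\rangle_{-\lambda}$. Writing $r=\sum_i a_i\otimes b_i$, we have
\[
\langle y, r_+(x)\rangle_\lambda=\sum_i\langle x,a_i\rangle_{-\lambda}\langle y,b_i\rangle_\lambda,
\]
where conformal linearity turns $\partial$ into $\lambda$ in $\langle y,\partial^k b_i\rangle_\lambda$. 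Similarly,
\[
\langle x, r_-(y)\rangle_{-\lambda}=-\sum_i\langle y,b_i\rangle_{\lambda}\langle x,a_i\rangle_{-\lambda}.
\]
These two computations give $r_-^*=-r_+$ and $r_+^*=-r_-$, hence $I^*=r_+^*-r_-^*=-r_-+r_+=I$.

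For the second assertion of (\ref{item1}), note that if $I$ is invertible then dualizing $I\circ I^{-1}=\id$ gives $(I^{-1})^*I^*=\id$. Thus $(I^{-1})^*=(I^*)^{-1}=I^{-1}$. The only care needed is the sign and shift bookkeeping with $-\partial$ versus $\lambda$. I will carry this out once on a $\bfk[\partial]$-basis as in Remark \ref{rmk-2}, where $r_+(e_k^*)=\sum_j f_{kj}(-\partial,\partial)e_j$ and $r_-(e_k^*)=-\sum_i f_{ik}(\partial,-\partial)e_i$; the matrix of $I$ then visibly satisfies the conformal-transpose symmetry.

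For (\ref{item2}), write $r=s+\alpha$ with $s=\frac12(r+\tau r)$ symmetric and $\alpha$ skew-symmetric. The first step is to check that $I$ depends only on $s$: for skew $\alpha$ one gets $\alpha_+=\alpha_-$, and indeed $I_r=2s_+$ up to the convention $s_+=-s_-$. Hence (\ref{eq:rr1}) is a statement about $s_+$. Next, I will pair both sides of (\ref{eq:rr1}) against an arbitrary $y\in\A^{*c}$ and apply them to $x\in\A^{*c}$. Using the definition of $\mathrm{ad}^*$ from Proposition \ref{prop-dual}, the identity $\langle y,[a_\lambda s_+(x)]\rangle$ equals $-\langle \mathrm{ad}^*(a)_\lambda y, s_+(x)\rangle$. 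Combined with $s_+^*=s_+$, one side becomes an expression of the form $\langle x\otimes y,[a_{\cdot} s_i]\otimes t_i\rangle$ and the other becomes $\langle x\otimes y, s_i\otimes[a_\cdot t_i]\rangle$, with appropriate spectral parameters. The difference is then exactly $\langle x\otimes y, a_{-\partial^{\otimes 2}}s\rangle_{\mu,\nu}$ after substitution, mirroring the computation in the proof of Proposition \ref{prop-1}.

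Since $\A$ is free and the pairing is nondegenerate, vanishing of this expression for all $x,y$ is equivalent to $a_{-\partial^{\otimes^2}}s=0$, which is $\A$-invariance of the symmetric part. The main obstacle is matching the spectral parameters precisely, namely that $\lambda$ in (\ref{eq:rr1}) corresponds to evaluating at $-\partial^{\otimes 2}$ with the right split between the two tensor factors. I would handle this by computing both sides on basis elements $e_k^*$ and comparing coefficients, so that the substitution $\mu\mapsto$ (pairing variables) is explicit.
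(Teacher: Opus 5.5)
Your proposal is correct and takes essentially the paper's route. Item (\ref{item1}) is a direct pairing computation (you pass through $r_-^*=-r_+$ and $r_+^*=-r_-$, which the paper records separately as Remark \ref{rmk-d}) plus dualizing $I\circ I^{-1}=\id$, and item (\ref{item2}) pairs against $x\otimes y$ so that $\langle y, I(\mathrm{ad}^*(a)_\lambda x)-[a_\lambda I(x)]\rangle_\mu$ becomes, up to a nonzero scalar, $\langle x\otimes y, a_{-\partial^{\otimes^2}}(r+\tau r)\rangle_{-\lambda-\mu,\mu}$; the only difference is cosmetic, since you first discard the skew part so that $I=2s_+$, whereas the paper expands $r+\tau r$ directly and uses $I=I^*$ to regroup the four terms.
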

\begin{proof}
Let $r=\sum_i a_i \otimes b_i$.

(\ref{item1})
   Let $x$, $y\in \A^{*c}$. Then we obtain
    \begin{eqnarray*}
        \langle I^*(x),y\rangle_\lambda&=&\langle x,I(y)\rangle_\lambda=\sum_i\langle x,\langle y,a_i\rangle_{-\partial}b_i+\langle y,b_i\rangle_{-\partial}a_i\rangle_\lambda\\
        &=&\sum_i(\langle x,b_i\rangle_\lambda\langle y,a_i\rangle_{-\lambda}+\langle x,a_i\rangle_\lambda\langle y,b_i\rangle_{-\lambda})\\
        &=&\langle r_+(x)-r_-(x),y\rangle_\lambda=\langle
        I(x),y\rangle_\lambda.
    \end{eqnarray*}
    Hence $I=I^*$. Moreover, if $I$ is an isomorphism of Lie conformal algebras, then we get
    \begin{eqnarray*}
        \langle x,a \rangle_\lambda =\langle x,(I\circ I^{-1}) (a)\rangle_\lambda =\langle I^*(x),I^{-1}(a ) \rangle_\lambda=\langle ((I^{-1})^*\circ I^*)(x),a\rangle_\lambda.
    \end{eqnarray*}
 Thus $({I^\ast})^{-1}=(I^{-1})^*$. 

 (\ref{item2})   Let $x,y\in \A^{*c}$ and $a\in \A$.  By Item (\ref{item1}), we have
    \begin{eqnarray*}
        &&\sum_i   \langle x\otimes y, a_i\otimes [a_{-\partial^{\otimes^2}}b_i]+ [a_{-\partial^{\otimes^2}}a_i]\otimes  b_i  + [a_{-\partial^{\otimes^2}}b_i]\otimes a_i+ b_i \otimes [a_{-\partial^{\otimes^2}}a_i]  \rangle_{ \lambda,\mu}\\
        &&\quad=        \langle y, \sum_i ( \langle {\rm ad}^*(a)_{-\lambda-\mu}x, a_i\rangle_{-\mu}b_i +\langle {\rm ad}^*(a)_{-\lambda-\mu}x, b_i\rangle_{-\mu}a_i )  \rangle_\mu  \\
        &&\quad\quad+\langle x, \sum_i ( \langle {\rm ad}^*(a)_{-\lambda-\mu}y, b_i\rangle_{-\lambda}a_i +\langle {\rm ad}^*(a)_{-\lambda-\mu}y, a_i\rangle_{-\lambda}b_i)  \rangle_\lambda   \\
        &&\quad=\langle y, (r_+-r_-)({\rm ad}^*(a)_{-\lambda-\mu}x )\rangle_\mu
        +\langle x, (r_+-r_-)({\rm ad}^*(a)_{-\mu-\lambda}y )\rangle_\lambda\\
        &&\quad=\langle y, I ({\rm ad}^*(a)_{-\lambda-\mu}x )\rangle_\mu
        +\langle   ({\rm ad}^*(a)_{-\mu-\lambda}y ),I(x)  \rangle_{-\lambda}\\
        &&\quad=\langle y, I ({\rm ad}^*(a)_{-\lambda-\partial}x )\rangle_\mu
        - \langle   y ,({\rm ad}(a)_{-\lambda-\partial})I(x)
        \rangle_{\mu}.
    \end{eqnarray*}
    Hence the symmetric part of $r$ is $\A$-invariant if and only
    if Eq.~(\ref{eq:rr1}) holds.
\end{proof}

\begin{remark}\label{rmk-d}
With the assumption in Lemma \ref{I}, we obtain
\begin{eqnarray*}
    \langle r_-^\ast(x),y\rangle_\lambda&=&\langle x, r_-(y)\rangle_\lambda=\langle x, -\sum_i\langle y,b_i\rangle_{-\partial} a_i\rangle_\lambda\\
    &=&-\sum_i\langle y,b_i\rangle_{-\lambda} \langle x, a_i\rangle_\lambda=-\langle r_+(x),y\rangle_\lambda,\;\;\;\forall x, y\in \A^{\ast^c}.
\end{eqnarray*}
Therefore, we get $r_-^\ast=-r_+$. Similarly, we have $r_-=-r_+^\ast$.
 \end{remark}
\begin{theorem}\label{qtl}
Let $(\A,[\cdot_\lambda\cdot])$ be a  Lie conformal algebra, where
$\A$ is free as a ${\bf k}[\partial]$-module, and $r\in \A\otimes
\A$. If the symmetric part of  $r$ is $\A$-invariant, then $r$ is
a solution of the CCYBE in $(\A,[\cdot_\lambda\cdot])$ if and only
if $(\A^{*c},[\cdot_\lambda\cdot]^r)$ is a Lie conformal algebra
where the $\lambda$-bracket is defined by Eq. \eqref{acl} and
$r_-: (\A^{*c},[\cdot_\lambda\cdot]^r) \rightarrow
(\A,[\cdot_\lambda\cdot])$  is a Lie conformal algebra
homomorphism.
\end{theorem}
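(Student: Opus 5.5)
The plan is to turn the CCYBE into an operator identity for $r_+$ and $r_-$, following the classical proof for Lie bialgebras. The first step is a computation relating $[\![r,r]\!]$ to $r_-$. For $x,y,z\in\A^{*c}$, I will pair $[\![r,r]\!]$ with $x\otimes y\otimes z$ at spectral parameters $(\lambda,\mu,\nu)$, with $\nu$ forced by the relation $\lambda+\mu+\nu\equiv$ the action of $\partial$; this makes the pairing well defined modulo $\partial\cdot(\A\otimes\A\otimes\A)$. Using Eq. \eqref{Eq-r} and the definition of ${\rm ad}^*$, each of the three terms of $[\![r,r]\!]$ becomes one of the following pairings with $z$:
\[
[r_-(x)_\lambda r_-(y)],\qquad r_-({\rm ad}^*(r_+(x))_\lambda y),\qquad r_-({\rm ad}^*(r_-(y))_{-\lambda-\partial}x).
\]
Remark \ref{rmk-d} ($r_-^*=-r_+$) will be used to move operators across the pairing. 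The target identity is
\[
\langle [\![r,r]\!], x\otimes y\otimes z\rangle = \pm\big\langle z,\ [r_-(x)_\lambda r_-(y)]-r_-([x_\lambda y]^r)\big\rangle,
\]
up to relabelling of the parameters. Since the pairing is nondegenerate ($\A$ is free, with dual basis), $r$ satisfies the CCYBE if and only if
\[
r_-([x_\lambda y]^r)=[r_-(x)_\lambda r_-(y)]\quad \text{for all } x,y\in\A^{*c}.
\]
In the classical case the symmetric part of $r$ is absent from the obstruction only after invariance is used. Here Lemma \ref{I}\eqref{item2} (the relation $I\circ{\rm ad}^*(a)_\lambda={\rm ad}(a)_\lambda\circ I$, with $r_+=r_-+I$) is what allows the mixed terms involving $r_+$ and $r_-$ to be rewritten purely through $r_-$ and the bracket $[\cdot_\lambda\cdot]^r$.

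The second step is to show that the CCYBE implies that $[\cdot_\lambda\cdot]^r$ is a Lie conformal bracket. Sesquilinearity holds by construction, since $r_\pm$ are ${\bf k}[\partial]$-homomorphisms and ${\rm ad}^*$ is conformal. For skew-symmetry, compute
\[
[x_\lambda y]^r+[y_{-\lambda-\partial}x]^r={\rm ad}^*(I(x))_\lambda y+{\rm ad}^*(I(y))_{-\lambda-\partial}x.
\]
Pairing this with $a\in\A$, and using $I=I^*$ from Lemma \ref{I}\eqref{item1} together with Eq. \eqref{eq:rr1}, the result vanishes. For the Jacobi identity, the homomorphism identity gives $r_\pm([x_\lambda y]^r)=[r_\pm(x)_\lambda r_\pm(y)]$; the $r_+$ version follows from the $r_-$ version and invariance of $I$. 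The Jacobiator of $[\cdot_\lambda\cdot]^r$ then reduces, via the fact that ${\rm ad}^*$ is a representation (Proposition \ref{prop-dual}), to terms that cancel, exactly as in the classical computation. A cleaner alternative is to invoke Proposition \ref{prop-1} and Proposition \ref{prop-cob}: under the CCYBE and invariance, $(\A,[\cdot_\lambda\cdot],\Delta_r)$ is a Lie conformal bialgebra, so $\Delta_r$ is a Lie conformal coalgebra, and hence $[\cdot_\lambda\cdot]^r$ is Lie by Proposition \ref{prop-1}. I would take this shortcut.

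The converse is immediate from the first step: if $r_-$ is a homomorphism, the pairing identity forces $[\![r,r]\!]\equiv 0$.

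The main obstacle is the first step: getting the spectral parameters and the substitutions $\mu=\id\otimes\partial\otimes\id$ right, so that each of the three terms of the CCYBE matches the correct operator expression modulo $\partial^{\otimes^3}$. The intended route is to test against dual basis elements $e_i^*\otimes e_j^*\otimes e_k^*$. Using the normalization $r=\sum_k e_k\otimes r_+(e_k^*)$ from Remark \ref{rmk-2} (allowed by Remark \ref{rmk-11}) should simplify this bookkeeping.
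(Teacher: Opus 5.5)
Your proposal is correct and is essentially the paper's proof: the paper also gets the Lie conformal structure on $\A^{*c}$ from Propositions \ref{prop-cob} and \ref{prop-1} (since $[\![r,r]\!]\equiv 0$ forces $a_{-\partial^{\otimes^3}}[\![r,r]\!]=0$), and it proves that the CCYBE holds $\Leftrightarrow$ $r_-$ is a homomorphism via the direct identity $\langle x,[r_-(y)_\mu r_-(z)]-r_-([y_\mu z]^r)\rangle_\lambda=\langle x\otimes y\otimes z,[\![r,r]\!]\rangle_{\lambda,\mu,-\lambda-\mu}$, where the output of $r_-$ is paired with the \emph{first} tensor factor. This identity uses only $r_-^*=-r_+$ (Remark \ref{rmk-d}) and holds for arbitrary $r$, so Lemma \ref{I}\eqref{item2} plays no role in your first step; the invariance hypothesis enters only through Proposition \ref{prop-cob}.
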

\begin{proof}
By Proposition \ref{prop-cob}, $(\A, [\cdot_\lambda \cdot], \Delta_r)$ is a coboundary Lie conformal bialgebra if and only if the symmetric part of $r$ is $\A$-invariant and $a_{-\partial^{\otimes^3}}[\![r,r]\!]=0$ for all $a\in \A$. Moreover, $[\![r,r]\!]\equiv 0$ $~~(\text{mod} ~~\partial \cdot (\A\otimes \A \otimes \A))$ implies that $ a_{ -\partial^{\otimes^3}}[\![r,r]\!]=0$.  Thus, if the symmetric part of  $r$ is $\A$-invariant and $r$ is a solution of the CCYBE, then $(\A, \Delta_r)$ is a Lie conformal coalgebra. Therefore, by Proposition \ref{prop-1}, $(\A^{*c},[\cdot_\lambda\cdot]^r)$ is a Lie conformal algebra.

Let $r=\sum_i a_i\otimes b_i\in \A\otimes \A$. For all $x,y,z\in \A^{*c}$, we have
\begin{eqnarray*}
    && \langle x,[r_-(y)_\mu r_-(z)]-r_-({\rm ad}^*(r_+(y))_\mu z-{\rm ad}^*(r_-(z))_{-\mu-\partial}y)\rangle_\lambda\\
    &&\quad=\sum_{i,j}(\langle x,\langle y,b_i\rangle_\mu\langle z,b_j\rangle_{-\mu-\partial}[{a_i}_\mu a_j] \rangle_\lambda    -\langle x,a_i\rangle_\lambda\langle z,\langle y,a_j\rangle_\mu[{b_j}_\mu b_i]\rangle_{-\lambda-\mu}\\
    &&\quad\quad-\langle x,a_i\rangle_\lambda\langle y,\langle z,b_j\rangle_{-\lambda-\mu}[{a_j}_{-\lambda-\mu}b_i]\rangle_\mu  ) \\
    &&\quad=\langle x\otimes y\otimes z,[\![r,r]\!] ~~(\text{mod} ~~\partial \cdot (\A\otimes \A \otimes
    \A))\rangle_{\lambda,\mu,-\lambda-\mu}.
\end{eqnarray*}
Hence $r$ is a solution of the CCYBE in $(\A, [\cdot_\lambda
\cdot])$ if and only if $r_-$ is a Lie conformal algebra
homomorphism from $(\A^{*c}, [\cdot_\lambda \cdot]^r)$ to $(\A,
[\cdot_\lambda \cdot])$. Therefore, this
conclusion holds. 
\end{proof}
\begin{remark}\label{rmk-new}
Under the condition that the symmetric part of $r$ is $\A$-invariant, the ${\bf k}[\partial]$-module homomorphism $r_-$
is a Lie conformal algebra homomorphism  if and only if $r_+$  is a Lie conformal algebra homomorphism. Then $I=r_+-r_-$ is also a Lie conformal algebra homomorphism.
\end{remark}


\begin{definition}
A quasi-triangular Lie conformal bialgebra
$(\A,[\cdot_\lambda\cdot],\Delta_r)$ is called {\bf factorizable}
if $\A$ is free as a ${\bf k}[\partial]$-module and the ${\bf
k}[\partial]$-module homomorphism $I: (\A^{*c}, [\cdot_\lambda
\cdot]^r)\rightarrow (\A, [\cdot_\lambda \cdot])$ is an
isomorphism of Lie conformal algebras.
\end{definition}

Consider the map
\[
\xymatrix@C=3em{  
\A^{*c} \ar[r]^{r_+\oplus r_-} & \A\oplus\A \ar[r]^{\varphi} & \A
}
\]
where $\varphi: \A\oplus \A\rightarrow \A$ is defined by $\varphi(a,b)=a-b$ for all $a$, $b\in \A$. Obviously, $r_+\oplus r_-$ and $\varphi$ are ${\bf k}[\partial]$-module homomorphisms, where the action of $\partial$ on $\A\otimes \A$ is given by
$\partial (a, b)=(\partial a, \partial b)$ for all  $a$, $b\in \A$. Note that $I=\varphi \circ (r_+\oplus r_-)$.

\begin{proposition}\label{ppfa}
Let $(\A,[\cdot_\lambda\cdot],\Delta_r) $ be a factorizable Lie conformal bialgebra. Then $\text{Im}(r_+\oplus r_-)$ is a direct sum of Lie conformal subalgebras, which is isomorphic to the Lie conformal algebra $(\A^{*c},[\cdot_\lambda\cdot]^r)$.   In addition, any $a\in\A$ has a unique decomposition $a=a_+-a_-$ where $(a_+,a_-)\in \text{Im}(r_+\oplus r_-)$.
\end{proposition}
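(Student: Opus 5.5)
The plan is to view $\A\oplus\A$ as the direct sum of two copies of the Lie conformal algebra $(\A,[\cdot_\lambda\cdot])$, with componentwise bracket $[(a,b)_\lambda(c,d)]=([a_\lambda c],[b_\lambda d])$. This is again a Lie conformal algebra, and the $\partial$-action on it is the one used to define $r_+\oplus r_-$. The argument then has three steps.

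First, I would show that $r_+\oplus r_-:(\A^{*c},[\cdot_\lambda\cdot]^r)\to\A\oplus\A$ is a homomorphism of Lie conformal algebras. A factorizable Lie conformal bialgebra is in particular quasi-triangular, so $r$ solves the CCYBE and the symmetric part of $r$ is $\A$-invariant. Theorem \ref{qtl} then says that $(\A^{*c},[\cdot_\lambda\cdot]^r)$ is a Lie conformal algebra and that $r_-$ is a Lie conformal algebra homomorphism. By Remark \ref{rmk-new}, $r_+$ is also a homomorphism. Since the bracket on $\A\oplus\A$ is componentwise, $r_+\oplus r_-$ is a homomorphism, and it is a ${\bf k}[\partial]$-module map by construction. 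Hence $\text{Im}(r_+\oplus r_-)$ is a Lie conformal subalgebra of $\A\oplus\A$.

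Second, I would show that $r_+\oplus r_-$ is injective. Since $I=\varphi\circ(r_+\oplus r_-)$ and $I$ is an isomorphism, $(r_+\oplus r_-)(x)=0$ forces $I(x)=0$, hence $x=0$. Therefore $r_+\oplus r_-$ is an isomorphism of Lie conformal algebras from $(\A^{*c},[\cdot_\lambda\cdot]^r)$ onto its image. For the phrase ``direct sum of Lie conformal subalgebras'', I read the statement as saying that $\text{Im}(r_+\oplus r_-)$ sits inside the direct sum $\A\oplus\A$, with $\text{Im}\,r_+$ and $\text{Im}\,r_-$ Lie conformal subalgebras of $\A$ since $r_\pm$ are homomorphisms. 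This interpretive point is the step I am least sure of, and I would phrase it carefully in the write-up.

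Third, I would prove the unique decomposition. For existence, take $a\in\A$ and set $x=I^{-1}(a)$, $a_+=r_+(x)$, $a_-=r_-(x)$; then $a_+-a_-=I(x)=a$ and $(a_+,a_-)\in\text{Im}(r_+\oplus r_-)$. For uniqueness, suppose $a=b_+-b_-$ with $(b_+,b_-)=(r_+(y),r_-(y))$. Then $I(y)=a$, so $y=I^{-1}(a)=x$, and hence $(b_+,b_-)=(a_+,a_-)$. In other words, $\varphi$ restricted to the image is the bijection $I\circ(r_+\oplus r_-)^{-1}$.

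The only substantive input is the homomorphism property of $r_\pm$, which the earlier theorem and remark already supply. The rest is linear algebra over ${\bf k}[\partial]$.
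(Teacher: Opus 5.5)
Your proposal is correct and follows the paper's proof. Both arguments get the homomorphism property of $r_\pm$ from Theorem \ref{qtl} and Remark \ref{rmk-new}, and both use that $I=\varphi\circ(r_+\oplus r_-)$ is an isomorphism to obtain injectivity onto the image and the existence and uniqueness of $a=a_+-a_-$; you spell this out in more detail than the paper, and your reading of ``direct sum of Lie conformal subalgebras'' (a subalgebra of $\A\oplus\A$ with componentwise bracket) is the intended one.
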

\begin{proof}
By Theorem \ref{qtl} and Remark \ref{rmk-new}, $r_+$ and $r_-$ are two Lie conformal algebra homomorphisms, which means that $\text{Im}(r_+\oplus r_-)$ is a direct sum of Lie conformal subalgebras. Since $I=r_+-r_-$ is a Lie conformal algebra isomorphism, it follows that $\text{Im}(r_+\oplus r_-) $ is isomorphic to $(\A^{*c},[\cdot_\lambda\cdot]^r)$ and $\varphi$ is surjective, which completes the proof.
\end{proof}

\subsection{A correspondence between factorizable Lie conformal bialgebras and quadratic Rota-Baxter Lie conformal algebras of nonzero weight}

    \begin{definition}\cite{HB}
Let $(\A,[\cdot_\lambda\cdot])$ be a Lie conformal algebra and
$\theta\in {\bf k}$. A ${\bf k}[\partial]$-module homomorphism
$\mathfrak{B}:\A\rightarrow \A$ is called a {\bf Rota-Baxter
operator of weight $\theta$}  on $(\A,[\cdot_\lambda\cdot])$ if it
satisfies
\begin{equation*}
    [\mathfrak{B}(a)_\lambda \mathfrak{B}(b)]=\mathfrak{B}([\mathfrak{B}(a)_\lambda b]+[a_\lambda \mathfrak{B}(b)]+\theta[a_\lambda b]  ),\qquad \forall a,b\in \A.
\end{equation*}
Moreover, a {\bf   Rota-Baxter Lie conformal
algebra of weight  $\theta$} is a  Lie conformal algebra $(\A,[\cdot_\lambda\cdot])$
with a Rota-Baxter operator $\mathfrak{B}$ of weight $\theta$,
which is denoted by $(\A,[\cdot_\lambda\cdot],\mathfrak{B})$.

\end{definition}

\begin{lemma}\label{bb8}
Let $\mathfrak{B}$ be a Rota-Baxter operator of weight $\theta$ on
a Lie conformal algebra $(\A, [\cdot_\lambda \cdot])$. Then
$\Wh{\mathfrak{B}}=-\mathfrak{B}-\theta \id $ is also a
Rota-Baxter operator of weight $\theta$ on  $(\A, [\cdot_\lambda
\cdot])$.
\end{lemma}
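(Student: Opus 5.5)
The plan is a direct verification. Expand both sides of the Rota-Baxter identity for $\Wh{\mathfrak{B}}=-\mathfrak{B}-\theta\,\id$, then reduce everything to the identity already satisfied by $\mathfrak{B}$.

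First, $\Wh{\mathfrak{B}}$ is a ${\bf k}[\partial]$-module homomorphism, since it is a ${\bf k}$-linear combination of the module homomorphisms $\mathfrak{B}$ and $\id$.

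Next, fix $a,b\in\A$ and expand the left-hand side by bilinearity of the $\lambda$-bracket:
\begin{equation*}
[\Wh{\mathfrak{B}}(a)_\lambda \Wh{\mathfrak{B}}(b)]=[\mathfrak{B}(a)_\lambda\mathfrak{B}(b)]+\theta[\mathfrak{B}(a)_\lambda b]+\theta[a_\lambda\mathfrak{B}(b)]+\theta^2[a_\lambda b].
\end{equation*}
For the right-hand side, first compute the inner sum:
\begin{equation*}
[\Wh{\mathfrak{B}}(a)_\lambda b]+[a_\lambda \Wh{\mathfrak{B}}(b)]+\theta[a_\lambda b]=-[\mathfrak{B}(a)_\lambda b]-[a_\lambda\mathfrak{B}(b)]-\theta[a_\lambda b].
\end{equation*}
Denote the sum $[\mathfrak{B}(a)_\lambda b]+[a_\lambda\mathfrak{B}(b)]+\theta[a_\lambda b]$ by $X$, so the inner sum above is $-X$. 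Applying $\Wh{\mathfrak{B}}$ to $-X$ gives $\mathfrak{B}(X)+\theta X$.

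By the Rota-Baxter identity for $\mathfrak{B}$, we have $\mathfrak{B}(X)=[\mathfrak{B}(a)_\lambda\mathfrak{B}(b)]$. Therefore
\begin{equation*}
\mathfrak{B}(X)+\theta X=[\mathfrak{B}(a)_\lambda\mathfrak{B}(b)]+\theta[\mathfrak{B}(a)_\lambda b]+\theta[a_\lambda\mathfrak{B}(b)]+\theta^2[a_\lambda b],
\end{equation*}
which coincides with the expanded left-hand side. This proves the lemma.

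There is no real obstacle here. The only care needed is bookkeeping: apply $\Wh{\mathfrak{B}}$ coefficientwise in $\lambda$, which is legitimate because it is ${\bf k}$-linear. Sesquilinearity is not needed at all.
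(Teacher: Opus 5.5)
Your proposal is correct and matches the paper's approach: the paper dismisses this lemma as ``straightforward,'' and your expansion is exactly that direct verification. Setting $X=[\mathfrak{B}(a)_\lambda b]+[a_\lambda \mathfrak{B}(b)]+\theta[a_\lambda b]$ shows the right-hand side equals $\mathfrak{B}(X)+\theta X$, which the Rota-Baxter identity for $\mathfrak{B}$ turns into the expanded left-hand side.
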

\begin{proof}
It is straightforward.
\end{proof}

    \begin{proposition}\label{prop-2}
Let $(\A,[\cdot_\lambda\cdot],\mathfrak{B})$ be a
Rota-Baxter Lie conformal algebra of weight  $\theta$. Then there is
a new Lie conformal algebra
$(\A,[\cdot_\lambda\cdot]^\mathfrak{B})$, where
$[\cdot_\lambda\cdot]^\mathfrak{B}$ is defined by
\begin{eqnarray*}
    [a_\lambda b]^\mathfrak{B}:=[\mathfrak{B}(a)_\lambda b]+[a_\lambda \mathfrak{B}(b)]+\theta [a_\lambda b],\qquad \forall a,b\in \A.
\end{eqnarray*}
We call $(\A,[\cdot_\lambda \cdot ]^\mathfrak{B})$ the {\bf
descendent Lie conformal algebra} of
$(\A,[\cdot_\lambda\cdot],\mathfrak{B})$ and denote it by
$\A_\mathfrak{B}$. Moreover, $\mathfrak{B}$ is a homomorphism of
Lie conformal algebras from $\A_\mathfrak{B}$ to $(\A,
[\cdot_\lambda \cdot])$ and $\mathfrak{B}$ is still a Rota-Baxter
operator of weight $\theta$ on $\A_\mathfrak{B}$.

\end{proposition}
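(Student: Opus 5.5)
To prove Proposition \ref{prop-2}, we check each claim directly; the first is sesquilinearity of $[\cdot_\lambda\cdot]^{\mathfrak{B}}$. Since $\mathfrak{B}$ is a ${\bf k}[\partial]$-module homomorphism, $\mathfrak{B}(\partial a)=\partial\mathfrak{B}(a)$. Each of the three summands $[\mathfrak{B}(a)_\lambda b]$, $[a_\lambda\mathfrak{B}(b)]$ and $\theta[a_\lambda b]$ therefore inherits conformal sesquilinearity from $[\cdot_\lambda\cdot]$. Skew-symmetry is also immediate: applying the skew-symmetry of $[\cdot_\lambda\cdot]$ to each summand gives
\[
[a_\lambda b]^{\mathfrak{B}}=-[b_{-\lambda-\partial}\mathfrak{B}(a)]-[\mathfrak{B}(b)_{-\lambda-\partial}a]-\theta[b_{-\lambda-\partial}a]=-[b_{-\lambda-\partial}a]^{\mathfrak{B}},
\]
where in the last step the first two terms swap roles.

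The homomorphism statement is exactly the Rota-Baxter identity rewritten. By definition of the new bracket, $\mathfrak{B}([a_\lambda b]^{\mathfrak{B}})=[\mathfrak{B}(a)_\lambda\mathfrak{B}(b)]$. We will prove this before the Jacobi identity, because it is the key ingredient there.

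For the Jacobi identity of $[\cdot_\lambda\cdot]^{\mathfrak{B}}$, we expand
\[
[a_\lambda[b_\mu c]^{\mathfrak{B}}]^{\mathfrak{B}}=[\mathfrak{B}(a)_\lambda[b_\mu c]^{\mathfrak{B}}]+[a_\lambda[\mathfrak{B}(b)_\mu\mathfrak{B}(c)]]+\theta[a_\lambda[b_\mu c]^{\mathfrak{B}}].
\]
Here we used the homomorphism property to replace $\mathfrak{B}([b_\mu c]^{\mathfrak{B}})$ by $[\mathfrak{B}(b)_\mu\mathfrak{B}(c)]$. Expanding every remaining $[\cdot_\lambda\cdot]^{\mathfrak{B}}$ produces a sum of nested brackets of the original bracket. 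The terms are indexed by which of $a,b,c$ carry a $\mathfrak{B}$, with coefficients $1$, $\theta$ or $\theta^2$. We treat the other two terms of the Jacobi identity, $[[a_\lambda b]^{\mathfrak{B}}{}_{\lambda+\mu}c]^{\mathfrak{B}}$ and $[b_\mu[a_\lambda c]^{\mathfrak{B}}]^{\mathfrak{B}}$, in the same way. We then group all terms by the pattern of $\mathfrak{B}$'s, namely $\mathfrak{B}\mathfrak{B}\mathfrak{B}$-free, one $\mathfrak{B}$, two $\mathfrak{B}$'s, and so on. Within each group the identity reduces to an instance of the original Jacobi identity applied to $(\mathfrak{B}^{\epsilon_1}a,\mathfrak{B}^{\epsilon_2}b,\mathfrak{B}^{\epsilon_3}c)$ with $\epsilon_i\in\{0,1\}$. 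The main obstacle is purely bookkeeping: keeping track of the $\lambda$, $\mu$ and $\lambda+\mu$ subscripts, and of the fact that the homomorphism substitution affects only the inner bracket. The grouping should nevertheless close up term by term.

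An alternative route, which we would use as a cross-check, is a standard trick. We first observe that $\mathfrak{B}$ and $\Wh{\mathfrak{B}}=-\mathfrak{B}-\theta\,\id$ (Lemma \ref{bb8}) give
\[
[a_\lambda b]^{\mathfrak{B}}=[\mathfrak{B}(a)_\lambda b]-[a_\lambda\Wh{\mathfrak{B}}(b)]
\]
in the case $\theta\neq0$. The descendent bracket can then be realized as the bracket transported through the injective map $a\mapsto(\mathfrak{B}(a),-\Wh{\mathfrak{B}}(a))$ into $\A\oplus\A$; this map is injective because $\mathfrak{B}-\Wh{\mathfrak{B}}$ equals $2\mathfrak{B}+\theta\,\id$, and $\theta\neq0$. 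This argument does not cover $\theta=0$, so the direct computation remains the primary proof.

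Finally, to show that $\mathfrak{B}$ is a Rota-Baxter operator of weight $\theta$ on $\A_{\mathfrak{B}}$, we must check
\[
[\mathfrak{B}(a)_\lambda\mathfrak{B}(b)]^{\mathfrak{B}}=\mathfrak{B}\bigl([\mathfrak{B}(a)_\lambda b]^{\mathfrak{B}}+[a_\lambda\mathfrak{B}(b)]^{\mathfrak{B}}+\theta[a_\lambda b]^{\mathfrak{B}}\bigr).
\]
By the homomorphism property, the right-hand side equals $[\mathfrak{B}^2(a)_\lambda\mathfrak{B}(b)]+[\mathfrak{B}(a)_\lambda\mathfrak{B}^2(b)]+\theta[\mathfrak{B}(a)_\lambda\mathfrak{B}(b)]$. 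This is precisely the definition of $[\mathfrak{B}(a)_\lambda\mathfrak{B}(b)]^{\mathfrak{B}}$, which completes the proof.
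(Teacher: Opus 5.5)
Your proof is correct and is exactly the direct verification the paper leaves as straightforward. The homomorphism property is the Rota--Baxter identity itself. Substituting it makes each $\mathfrak{B}$-pattern in the Jacobiator an instance of the original Jacobi identity (at most two $\mathfrak{B}$'s occur), and the Rota--Baxter property on $\mathcal{A}_{\mathfrak{B}}$ follows from the homomorphism property just as you show.

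One small slip is in your optional cross-check. The map $a\mapsto(\mathfrak{B}(a),\mathfrak{B}(a)+\theta a)$ is injective because its two components differ by $\theta a$, not because of $2\mathfrak{B}+\theta\,\mathrm{id}$, which need not be injective. The transport argument also needs that $\mathfrak{B}+\theta\,\mathrm{id}=-\widehat{\mathfrak{B}}$ is a homomorphism $\mathcal{A}_{\mathfrak{B}}\to\mathcal{A}$.
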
 
\begin{proof}
It is straightforward.
\end{proof}

    \begin{definition}
A {\bf representation} of a    Rota-Baxter Lie
conformal algebra $(\A,[\cdot_\lambda\cdot],$ $\B)$ of weight  $\theta$ is a triple
$(M,\rho,\T)$, where $(M,\rho)$ is a representation of
$(\A,[\cdot_\lambda \cdot])$ and $\T:M\rightarrow M$ is a ${\bf
k}[\partial]$-module homomorphism satisfying
\begin{equation*}
    \rho(\B (a))_\lambda (\T (v))=\T(\rho (\B(a))_\lambda v+\rho(a)_\lambda \T (v)+\theta\rho(a)_\lambda v),\qquad
    \forall a\in \A, v\in M.
\end{equation*}

\end{definition}

\begin{proposition}\label{pp-sprb}
Let $(\A,[\cdot_\lambda\cdot], \B)$ be a
Rota-Baxter Lie conformal algebra of weight  $\theta$, $M$ be a ${\bf
k}[\partial]$-module, and $\T:M\rightarrow M$ be a ${\bf
k}[\partial]$-module homomorphism. Define a ${\bf
k}[\partial]$-module homomorphism $\B\oplus\T$ on the direct sum
$\A\oplus M$ (as ${\bf k}[\partial]$-modules)
 by
$$(\B\oplus\T)(a,x)=(\B (a),\T (x)),\;\; \forall a\in \A, x\in
M.$$ Then $(\A\oplus M,[\cdot_\lambda \cdot], \B\oplus \T)$ is a
   Rota-Baxter Lie conformal algebra of weight  $\theta$ with the
$\lambda$-bracket defined by Eq. (\ref{eq-rep})  if and only if
$(M,\rho,\T) $ is a representation of
$(\A,[\cdot_\lambda\cdot],\B)$.
\end{proposition}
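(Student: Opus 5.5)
We prove the statement by writing out the Rota-Baxter identity for $\B\oplus\T$ on $\A\ltimes_\rho M$ and comparing it with the defining identity of a representation of $(\A,[\cdot_\lambda\cdot],\B)$. Since $\A\ltimes_\rho M$ is already a Lie conformal algebra (by the semi-direct product construction in Eq. (\ref{eq-rep})), the only condition to examine is the Rota-Baxter identity.

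We take arbitrary $(a,x),(b,y)\in\A\oplus M$. Using Eq. (\ref{eq-rep}), the left-hand side is
\begin{equation*}
[(\B(a),\T(x))_\lambda(\B(b),\T(y))]=\bigl([\B(a)_\lambda\B(b)],\ \rho(\B(a))_\lambda\T(y)-\rho(\B(b))_{-\lambda-\partial}\T(x)\bigr).
\end{equation*}
The right-hand side is $(\B\oplus\T)$ applied to
\begin{equation*}
[(\B(a),\T(x))_\lambda(b,y)]+[(a,x)_\lambda(\B(b),\T(y))]+\theta[(a,x)_\lambda(b,y)].
\end{equation*}
Its $\A$-component is $\B([\B(a)_\lambda b]+[a_\lambda\B(b)]+\theta[a_\lambda b])$. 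Its $M$-component is $\T$ applied to
\begin{equation*}
\rho(\B(a))_\lambda y+\rho(a)_\lambda\T(y)+\theta\rho(a)_\lambda y-\rho(b)_{-\lambda-\partial}\T(x)-\rho(\B(b))_{-\lambda-\partial}x-\theta\rho(b)_{-\lambda-\partial}x.
\end{equation*}

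The $\A$-components agree automatically, because $\B$ is a Rota-Baxter operator of weight $\theta$. So $\B\oplus\T$ is a Rota-Baxter operator if and only if the $M$-components agree for all $a,b,x,y$. We split the $M$-component into the terms involving $y$ and the terms involving $x$.

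The $y$-terms agree if and only if the representation identity holds with $v=y$. The $x$-terms are the same identity with $(b,x)$ in place of $(a,v)$ and $\lambda$ replaced by $-\lambda-\partial$, multiplied by $-1$.

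For necessity, set $x=0$, $b=0$: this yields exactly the defining identity of $(M,\rho,\T)$. For sufficiency, the $y$-part holds directly. The $x$-part requires substituting $\lambda\mapsto-\lambda-\partial$ into the identity, which is legitimate only if $\T$ commutes with this substitution.

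This substitution is the one step needing care. Here $\partial$ acts on the output in $M$. The identity holds as an equality of polynomials in $\lambda$ with coefficients in $M$, and $\T$ is a ${\bf k}[\partial]$-module homomorphism, so $\T$ commutes with any polynomial in $\partial$. Hence replacing $\lambda$ by $-\lambda-\partial$ on both sides preserves the equality. Combining the two parts gives the equivalence.
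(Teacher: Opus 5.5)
Your proof is correct and is the direct componentwise check that the paper leaves as ``straightforward''. The $\A$-component is the Rota-Baxter identity for $\B$, and the $y$-terms give the defining identity of $(M,\rho,\T)$. The $x$-terms are that same identity for $(b,x)$ with $\lambda\mapsto-\lambda-\partial$, which you rightly justify by $\T$ commuting with $\partial$. One point you assume without saying: the Lie conformal algebra structure on $\A\oplus M$ given by Eq.~\eqref{eq-rep} is itself equivalent to $(M,\rho)$ being a representation, so that part of both sides matches automatically.
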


\begin{proof}
It is straightforward.
\end{proof}

\begin{lemma} \label{m8}
Let  $(\A,[\cdot_\lambda \cdot],\B)$ be a
Rota-Baxter Lie conformal algebra of weight  $\theta$, $(M,\rho) $ be a representation
of $(\A,[\cdot_\lambda \cdot])$ and $\beta:M\rightarrow M$ be a ${\bf
k}[\partial]$-module homomorphism, where $M$ is free as a ${\bf
k}[\partial]$-module. Then $(M^{*c},\rho^*,\beta^*)$
is a representation of $(\A,[\cdot_\lambda \cdot],\B)$ if and only
if $\beta$ satisfies the following condition:
\begin{equation}\label{eqtb}
    \beta(\rho(\B( a))_\lambda v)=\rho(\B (a))_\lambda \beta( v)+\beta(\rho (a)_\lambda \beta (v)) +\theta\rho(a)_\lambda \beta (v),\qquad \forall a\in \A,\;v\in M.
\end{equation}
In particular,  if $\A$ is also free as a ${\bf k}[\partial]$-module, then for a ${\bf k}[\partial]$-module homomorphism $\T:\A\rightarrow \A$, $(\A^{*c},ad^*,\T^*)$ is a representation of $(\A,[\cdot_\lambda \cdot],\B)$ if and only if $\T$ satisfies the following condition.
\begin{equation}\label{eqaa}
    \T([\B( a)_\lambda b])=[\B (a)_\lambda \T( b)]+\T([a_\lambda \T (b)] )+\theta[a_\lambda \T (b)],\qquad \forall a,b\in \A.
\end{equation}

\end{lemma}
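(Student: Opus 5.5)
The plan is to unwind the definition of a representation of a Rota-Baxter Lie conformal algebra for the triple $(M^{*c},\rho^*,\beta^*)$ and pair everything against elements of $M$. Since $(M^{*c},\rho^*)$ is already a representation of $(\A,[\cdot_\lambda\cdot])$ by Proposition \ref{prop-dual} ($M$ being free), the only condition to check is
\begin{equation*}
\rho^*(\B(a))_\lambda(\beta^*(\varphi))=\beta^*\big(\rho^*(\B(a))_\lambda\varphi+\rho^*(a)_\lambda\beta^*(\varphi)+\theta\rho^*(a)_\lambda\varphi\big),\qquad \forall a\in\A,\ \varphi\in M^{*c}.
\end{equation*}
I will evaluate both sides at an arbitrary $v\in M$ with spectral parameter $\mu$. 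I will use the defining formula $(\rho^*(a)_\lambda\varphi)_\mu v=-\varphi_{\mu-\lambda}(\rho(a)_\lambda v)$ together with $\langle\beta^*(\varphi),v\rangle_\mu=\langle\varphi,\beta(v)\rangle_\mu$.

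Concretely, the left side becomes $-\varphi_{\mu-\lambda}\big(\beta(\rho(\B(a))_\lambda v)\big)$. The three terms on the right become, in order:
\begin{equation*}
-\varphi_{\mu-\lambda}\big(\rho(\B(a))_\lambda\beta(v)\big),\qquad -\varphi_{\mu-\lambda}\big(\beta(\rho(a)_\lambda\beta(v))\big),\qquad -\theta\varphi_{\mu-\lambda}\big(\rho(a)_\lambda\beta(v)\big).
\end{equation*}
Here the $\beta$ on the inside arises because $\beta^*$ is applied outermost, and $\beta^*$ inside $\rho^*$ transposes to $\beta$ on $v$ after the $\rho$-action. One must check carefully that the spectral parameters match, i.e.\ that $\beta$ commutes with $\partial$ so that $\beta(\rho(a)_\lambda v)$ carries the right $\lambda$-dependence; this holds because $\beta$ is a ${\bf k}[\partial]$-module homomorphism.

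Equating the two sides for all $\varphi\in M^{*c}$ and using nondegeneracy of the pairing between $M^{*c}$ and the free module $M$ gives exactly Eq.~(\ref{eqtb}), and conversely. The nondegeneracy holds because, if $w\in M[\lambda]$ pairs to zero with every dual basis element, then $w=0$. This is the one point needing care: I will expand in a ${\bf k}[\partial]$-basis and dual basis as in Remark~\ref{rmk-2}. The special case follows by taking $M=\A$, $\rho=\mathrm{ad}$ and $\beta=\T$, which turns Eq.~(\ref{eqtb}) into Eq.~(\ref{eqaa}).

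I expect the main obstacle to be the bookkeeping of the $\lambda,\mu$ shifts in the dual-pairing computation. The algebra itself is formal.
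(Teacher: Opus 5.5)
Your proposal is correct and follows essentially the same route as the paper: the paper also pairs the difference of the two sides of the Rota--Baxter representation condition for $(M^{*c},\rho^*,\beta^*)$ with $v\in M$ at parameter $\mu$, obtaining $-\langle f,\beta(\rho(\mathfrak{B}(a))_\lambda v)-\rho(\mathfrak{B}(a))_\lambda\beta(v)-(\beta+\theta\,\id)\rho(a)_\lambda\beta(v)\rangle_{\mu-\lambda}$, concludes via Proposition~\ref{prop-dual} and nondegeneracy of the pairing, and gets the special case from the adjoint representation. Your explicit checks, namely the dual-basis argument that the pairing with a free module is nondegenerate and the use of $\beta$ commuting with $\partial$, fill in details the paper leaves implicit.
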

\begin{proof}
For all $a\in \A$, $v\in M$ and $f\in M^{\ast c}$, we have
\begin{align*}
    & &\langle  \rho^*(\B (a))_\lambda \beta^* (f)-\beta^*( \rho^*(\B (a))_\lambda f+\rho^*(a)_\lambda \beta^* (f)+\theta\rho^*(a)_\lambda f) ,v \rangle_\mu\\
    &&\quad=-\langle f,\beta(\rho(\B (a))_\lambda v)-\rho(\B (a))_\lambda \beta (v)-(\beta+\theta\id)\rho(a)_\lambda\beta (v)\rangle_{\mu-\lambda}.
\end{align*}
Then the first conclusion follows directly from
Proposition~\ref{prop-dual} and the second conclusion follows from the
first conclusion by taking the adjoint representation.
\end{proof}

\begin{proposition}\label{pp-rbdr}
Let $(\A,[\cdot_\lambda \cdot],\B)$ be a
Rota-Baxter Lie conformal algebra of weight  $\theta$ and $(M,\rho,\T ) $ be a
representation of $(\A,[\cdot_\lambda \cdot],\B)$, where $\A$ and $M$ are
free as   ${\bf k}[\partial]$-modules. Then $(M^{*c},\rho^*,$
$-\theta\id-\T^*)$  
is a representation of $(\A,[\cdot_\lambda \cdot],\B)$. In
particular,  $(\A^{*c},{\rm ad}^*,-\theta \id-\B^*) $ is a
representation of $(\A,[\cdot_\lambda \cdot],\B)$, which is called
the {\bf coadjoint representation} of $(\A,[\cdot_\lambda
\cdot],\B)$.
\end{proposition}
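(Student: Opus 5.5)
The plan is to apply Lemma \ref{m8} with $\beta=-\theta\id-\T$, so that $\beta^*=-\theta\id-\T^*$ because $(\cdot)^*$ is linear and $\id^*=\id$. By that lemma, $(M^{*c},\rho^*,-\theta\id-\T^*)$ is a representation of $(\A,[\cdot_\lambda\cdot],\B)$ if and only if $\beta$ satisfies Eq.~(\ref{eqtb}). So everything reduces to checking Eq.~(\ref{eqtb}) for $\beta=-\theta\id-\T$, using only the hypothesis that $(M,\rho,\T)$ is a representation:
\begin{equation*}
\rho(\B(a))_\lambda \T(v)=\T\bigl(\rho(\B(a))_\lambda v+\rho(a)_\lambda\T(v)+\theta\rho(a)_\lambda v\bigr).
\end{equation*}

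To verify Eq.~(\ref{eqtb}), I will write $L$ for its left-hand side and $R$ for its right-hand side, and expand both in terms of $\T$. The left-hand side is
\begin{equation*}
L=-\theta\rho(\B(a))_\lambda v-\T(\rho(\B(a))_\lambda v).
\end{equation*}
For the right-hand side, the middle term $\beta(\rho(a)_\lambda\beta(v))$ expands to
\begin{equation*}
\theta^2\rho(a)_\lambda v+\theta\rho(a)_\lambda\T(v)+\theta\T(\rho(a)_\lambda v)+\T(\rho(a)_\lambda\T(v)).
\end{equation*}
The last term $\theta\rho(a)_\lambda\beta(v)$ contributes $-\theta^2\rho(a)_\lambda v-\theta\rho(a)_\lambda \T(v)$. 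Adding these, the $\theta^2$ terms cancel and the $\theta\rho(a)_\lambda\T(v)$ terms cancel, leaving
\begin{equation*}
R=-\theta\rho(\B(a))_\lambda v-\rho(\B(a))_\lambda\T(v)+\theta\T(\rho(a)_\lambda v)+\T(\rho(a)_\lambda\T(v)).
\end{equation*}
Then $L-R$ equals
\begin{equation*}
\rho(\B(a))_\lambda\T(v)-\T\bigl(\rho(\B(a))_\lambda v+\rho(a)_\lambda\T(v)+\theta\rho(a)_\lambda v\bigr),
\end{equation*}
which vanishes exactly by the representation condition for $(M,\rho,\T)$. Hence Eq.~(\ref{eqtb}) holds, and the first statement follows.

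For the particular case, I take $M=\A$ with $\rho={\rm ad}$ and $\T=\B$. Then $(\A,{\rm ad},\B)$ is a representation of $(\A,[\cdot_\lambda\cdot],\B)$ because its defining identity is literally the Rota-Baxter identity. Here $\A$ is assumed free, so the general case applies and yields $(\A^{*c},{\rm ad}^*,-\theta\id-\B^*)$.

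The only delicate point is the bookkeeping of signs and $\theta$-terms in the expansion above. There is no conceptual obstacle, since all conformal-dual subtleties are already handled inside Lemma \ref{m8}.
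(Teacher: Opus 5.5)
Your proof is correct and takes the paper's route: the paper also puts $\beta=-\theta\id-\T$, notes that Eq.~(\ref{eqtb}) then holds, and invokes Lemma~\ref{m8}. Your expansion just writes out the cancellation the paper leaves implicit, and you correctly obtain the coadjoint case by noting that $(\A,{\rm ad},\B)$ is a representation because its defining identity is the Rota-Baxter identity.
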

\begin{proof}
Note that Eq. (\ref{eqtb}) holds when $\beta=-\theta\id-\T$. Then this conclusion follows directly from Lemma \ref{m8}.
\end{proof}

\begin{definition}
A {\bf quadratic   Rota-Baxter Lie conformal
algebra of weight  $\theta$} consists of a   Rota-Baxter Lie conformal
algebra $(\A,[\cdot_\lambda\cdot], \B)$  of weight  $\theta$ and a non-degenerate
symmetric invariant conformal bilinear form
$\omega_\lambda(\cdot,\cdot)$ which satisfies the following
condition
\begin{equation}\label{brblca}
    \omega_\lambda(a,\mathfrak{B}(b))+\omega_\lambda(\mathfrak{B}(a),b)+\theta\omega_\lambda(a,b)=0,\qquad \forall a,b\in \A.
\end{equation}
Denote it by $(\A,[\cdot_\lambda\cdot],\mathfrak{B},\omega_\lambda(\cdot,\cdot))$.
\end{definition}

\begin{remark}\label{rmk-1}
Let
$(\A,[\cdot_\lambda\cdot],\mathfrak{B},\omega_\lambda(\cdot,\cdot))$
be a quadratic   Rota-Baxter Lie conformal algebra of weight
$\theta$. By Lemma \ref{bb8}, it is straightforward to show that
$(\A,[\cdot_\lambda\cdot],\widehat{\mathfrak{B}},\omega_\lambda(\cdot,\cdot))$
is also a quadratic   Rota-Baxter Lie conformal algebra of weight
$\theta$. Moreover, the existence of a non-degenerate symmetric
invariant conformal bilinear form $\omega_\lambda(\cdot,\cdot)$
implies that $\A$ is necessarily free as a  ${\bf
k}[\partial]$-module.
\end{remark}

\begin{theorem}\label{flrbl}
Let $(\A,[\cdot_\lambda \cdot],\Delta_r)$ be a factorizable Lie
conformal bialgebra with $I=r_+-r_-$ and $\theta\in {\bf k}$. Then
$(\A,[\cdot_\lambda\cdot],\mathfrak{B},\omega^I_\lambda(\cdot,\cdot))$
is a quadratic    Rota-Baxter Lie conformal
algebra of weight  $\theta$, where the ${\bf k}[\partial]$-module homomorphism
$\mathfrak{B}:\A\rightarrow \A$ and
$\omega^I_\lambda(\cdot,\cdot)$ are defined respectively by
\begin{align}
    \label{b}       &\mathfrak{B}=\theta r_-\circ I^{-1}, \\
    \label{w}       &\omega^I_\lambda(a,b)=\langle I^{-1}(a),b\rangle_\lambda,  \;\;\;\forall  a,b\in \A.
\end{align}

\end{theorem}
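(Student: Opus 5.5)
We have to establish three things: that $\B=\theta r_-\circ I^{-1}$ is a Rota-Baxter operator of weight $\theta$, that $\omega^I_\lambda$ is a non-degenerate symmetric invariant conformal bilinear form, and that Eq.~(\ref{brblca}) holds. Throughout, $I:(\A^{*c},[\cdot_\lambda\cdot]^r)\to(\A,[\cdot_\lambda\cdot])$ is an isomorphism of Lie conformal algebras. By Theorem \ref{qtl} and Remark \ref{rmk-new}, $r_+$ and $r_-$ are Lie conformal algebra homomorphisms. By Lemma \ref{I}, $I=I^*$ and $I\circ{\rm ad}^*(a)_\lambda={\rm ad}(a)_\lambda\circ I$.

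\textbf{Rota-Baxter identity.} Write $a=I(x)$ and $b=I(y)$ with $x,y\in\A^{*c}$. Then
\[
\B(a)=\theta r_-(x),\qquad \B(a)+\theta a=\theta r_+(x),
\]
using $I=r_+-r_-$. The right-hand side of the Rota-Baxter identity is
\[
\B\bigl([\B(a)_\lambda b]+[a_\lambda\B(b)]+\theta[a_\lambda b]\bigr)=\B\bigl([(\B(a)+\theta a)_\lambda b]+[a_\lambda \B(b)]\bigr)=\theta^2 r_-I^{-1}\bigl([r_+(x)_\lambda I(y)]-[I(x)_\lambda r_-(y)]\bigr).
\]
I will show that
\[
[r_+(x)_\lambda I(y)]-[I(x)_\lambda r_-(y)]=I([x_\lambda y]^r).
\]
Using the intertwining relation $I\circ{\rm ad}^*(r_+(x))_\lambda={\rm ad}(r_+(x))_\lambda\circ I$, we get $I({\rm ad}^*(r_+(x))_\lambda y)=[r_+(x)_\lambda I(y)]$. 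Similarly, $I({\rm ad}^*(r_-(y))_{-\lambda-\partial}x)=[r_-(y)_{-\lambda-\partial}I(x)]=-[I(x)_\lambda r_-(y)]$ by skew-symmetry. Combining these with Eq.~(\ref{acl}) gives the claim. The right-hand side therefore equals $\theta^2 r_-([x_\lambda y]^r)$. Since $r_-$ is a homomorphism, this is $\theta^2[r_-(x)_\lambda r_-(y)]=[\B(a)_\lambda\B(b)]$, which is the left-hand side. Note that $\theta=0$ is trivially allowed. The delicate part of this step is tracking the conformal parameter through the skew-symmetry, i.e.\ checking that the identity $I({\rm ad}^*(c)_{-\lambda-\partial}x)=[c_{-\lambda-\partial}I(x)]$ is legitimate. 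It is, because the intertwining relation holds as a polynomial identity in $\lambda$, so the substitution $\lambda\mapsto-\lambda-\partial$ is valid.

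\textbf{The bilinear form.} Conformal sesquilinearity of $\omega^I$ is inherited from the pairing $\langle\cdot,\cdot\rangle_\lambda$, since $I^{-1}$ is a ${\bf k}[\partial]$-module map. Non-degeneracy holds because $I^{-1}$ is bijective and the pairing between $\A^{*c}$ and $\A$ is non-degenerate, $\A$ being free. For symmetry, by Lemma \ref{I}(a) we have $(I^{-1})^*=I^{-1}$, so
\[
\omega^I_\lambda(a,b)=\langle I^{-1}(a),b\rangle_\lambda=\langle I^{-1}(b),a\rangle_{-\lambda}=\omega^I_{-\lambda}(b,a).
\]
For invariance, the intertwining relation gives $I^{-1}[a_\mu b]={\rm ad}^*(a)_\mu I^{-1}(b)$. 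Hence
\[
\omega^I_\lambda([a_\mu b],c)=\langle I^{-1}[a_\mu b],c\rangle_\lambda=-\langle I^{-1}(b),[a_\mu c]\rangle_{\lambda-\mu}.
\]
I will then use symmetry and the definition of ${\rm ad}^*$ once more, rewriting $\omega_\mu(a,[b_{\lambda-\mu}c])$ as $-\omega_{-\mu}([b_{\lambda-\mu}c],a)$-type expressions together with skew-symmetry, and match the two sides. This is a routine but sign-sensitive computation; I expect it to be the main bookkeeping obstacle. It is easiest done by writing invariance in the equivalent form $\omega_\lambda([a_\mu b],c)=-\omega_{\lambda-\mu}(b,[a_\mu c])$, which follows from Eq.~(\ref{cbf1}) together with symmetry and skew-symmetry.

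\textbf{Compatibility (\ref{brblca}).} With $a=I(x)$ and $b=I(y)$,
\[
\omega^I_\lambda(\B(a),b)=\theta\langle I^{-1}r_-(x),I(y)\rangle_\lambda=\theta\langle r_-(x),y\rangle_{-\lambda},
\]
using $I^*=I$ (moved over to the other slot). By the same computation,
\[
\omega^I_\lambda(a,\B(b))=\theta\langle x,r_-(y)\rangle_\lambda=\theta\langle r_-^*(x),y\rangle_{-\lambda}=-\theta\langle r_+(x),y\rangle_{-\lambda},
\]
by Remark \ref{rmk-d}. Moreover,
\[
\theta\,\omega^I_\lambda(a,b)=\theta\langle x,I(y)\rangle_\lambda=\theta\langle I(x),y\rangle_{-\lambda}.
\]
The sum of the three terms is $\theta\langle (r_--r_++I)(x),y\rangle_{-\lambda}=0$, as required.
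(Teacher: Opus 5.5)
Your proof is essentially the paper's: the same three verifications, the same use of $I=I^*$, $r_-^*=-r_+$ and of $r_-$ being a homomorphism. The one variation is that you obtain $I([x_\lambda y]^r)=[r_+(x)_\lambda I(y)]+[I(x)_\lambda r_-(y)]$ from the intertwining relation of Lemma~\ref{I}(b), whereas the paper gets it from $r_\pm$ both being homomorphisms together with $r_+=I+r_-$.

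Two small repairs are needed. First, in both of your displays the term $[I(x)_\lambda r_-(y)]$ must carry a plus sign: it comes from $[a_\lambda\B(b)]=\theta[I(x)_\lambda r_-(y)]$, and your own skew-symmetry step also produces $+$. The two slips cancel, so the argument stands. Second, the invariance step you left open is one line:
\[
\omega^I_\lambda([a_\mu b],c)=-\omega^I_\lambda([b_{\lambda-\mu}a],c)=\omega^I_\mu(a,[b_{\lambda-\mu}c]).
\]
The first equality is skew-symmetry plus sesquilinearity. The second is your identity $\omega^I_\lambda([a_\mu b],c)=-\omega^I_{\lambda-\mu}(b,[a_\mu c])$ with $a,b$ swapped and $\mu\mapsto\lambda-\mu$. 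This is the direction you actually need, not the one you cite, and it is exactly the paper's chain.
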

\begin{proof}
Let $a,b\in \A$. Then we have
\begin{eqnarray}
    I([I^{-1}(a)_\lambda I^{-1}(b)]^r)&=&(r_+-r_-)([I^{-1}(a)_\lambda I^{-1}(b)]^r)\label{eq-q1}\\
    &=&[(I+r_-)(I^{-1}(a))_\lambda (I+r_-)(I^{-1}(b)) ]-[r_-(I^{-1}(a))_\lambda r_-(I^{-1}(b))]\nonumber\\
    &=&[a_\lambda b]+[r_-(I^{-1}(a))_\lambda b]+[a_\lambda r_-(I^{-1}(b))].\nonumber
\end{eqnarray}
Thus we obtain
\begin{align*}
    [\mathfrak{B}(a)_\lambda \mathfrak{B}(b)]&=[\theta (r_-\circ I^{-1})(a)_\lambda \theta (r_-\circ I^{-1})(b) ]
    =\theta^2 (r_-\circ I^{-1}\circ I)([I^{-1}(a)_\lambda  I^{-1} (b)]^r)\\
    &=\theta^2 (r_-\circ I^{-1})(I([I^{-1}(a)_\lambda  I^{-1} (b)]^r))\\
    &=\theta^2 (r_-\circ I^{-1})([a_\lambda b]+[r_-(I^{-1}(a))_\lambda b]+[a_\lambda r_-(I^{-1}(b))])\\
    &=\mathfrak{B}(\theta[a_\lambda b]+[\mathfrak{B}(a)_\lambda b]+[a_\lambda
    \mathfrak{B}(b)]).
\end{align*}
Hence $\mathfrak{B}$ is a Rota-Baxter operator of weight $\theta$
on $(\A,[\cdot_\lambda \cdot])$. Obviously,
$\omega^I_\lambda(\cdot,\cdot)$ is non-degenerate. By Lemma
\ref{I}, we have
\begin{align*}
    &\omega_\lambda^I(a,b)=\langle I^{-1}(a),b\rangle_\lambda=\langle b,I^{-1}(a)\rangle_{-\lambda}=\langle I^{-1}(b),a\rangle_{-\lambda}
    =\omega_{-\lambda}^I(b,a),\\
    &\omega_{\lambda}^I([a_\mu b],c)=\langle I^{-1}([a_\mu b]),c\rangle_\lambda=-\langle I^{-1}([b_{-\mu-\partial} a]),c\rangle_\lambda =-\langle ad^* (b)_{\lambda-\mu}(I^{-1}(a)),c\rangle_\lambda\\
    &\qquad\qquad\,\,\,\,=\langle I^{-1}(a),[b_{\lambda-\mu}c]\rangle_\mu=\omega_\mu^I(a,[b_{\lambda-\mu
    }c]).
\end{align*}
Hence $\omega^I_\lambda(\cdot,\cdot) $ is a non-degenerate symmetric invariant
conformal bilinear form on $(\A,[\cdot_\lambda\cdot])$. Note that
$r_-^\ast=-r_+$. Therefore we obtain
\begin{eqnarray*}
    &&\omega_{\lambda}^I(a,\mathfrak{B}(b))+\omega_{\lambda}^I(\mathfrak{B}(a),b)+\theta\omega_{\lambda}^I(a,b)\\
    &=&\langle I^{-1} (a),\theta (r_-\circ I^{-1})(b)\rangle_\lambda +\langle I^{-1}(\theta (r_-\circ I^{-1})a,b\rangle_\lambda +\theta\langle I^{-1}a,b\rangle_\lambda\\
    &=&\theta( \langle I^{-1}(a),(r_-\circ I^{-1})(b)+b\rangle_\lambda+\langle (I^{-1}\circ r_-\circ I^{-1})a,b\rangle_\lambda)\\
    &=&\theta\langle a,(I^{-1}\circ r_-\circ I^{-1}+I^{-1}+I^{-1}\circ (-r_+)\circ I^{-1})b\rangle_\lambda=0.
\end{eqnarray*}
So
$(\A,[\cdot_\lambda\cdot],\mathfrak{B},\omega^I_\lambda(\cdot,\cdot))$
is a quadratic    Rota-Baxter Lie conformal
algebra of weight  $\theta$.
\end{proof}

\begin{corollary}
Let $(\A,[\cdot_\lambda \cdot],\Delta_r)$ be a factorizable Lie
conformal bialgebra with $I=r_+-r_-$ and $\theta\in {\bf k}$. Then
$(\A,[\cdot_\lambda\cdot],\Wh{\mathfrak{B}},\omega^I_\lambda(\cdot,\cdot))$
is a quadratic   Rota-Baxter Lie conformal
algebra of weight  $\theta$, where $\mathfrak{B}$ and $\omega^I_\lambda(\cdot,\cdot)$
are given by Eqs. \eqref{b} and \eqref{w} respectively.
\end{corollary}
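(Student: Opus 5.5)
The corollary follows by combining Theorem \ref{flrbl} with Remark \ref{rmk-1} (equivalently, Lemma \ref{bb8}).

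First, I apply Theorem \ref{flrbl} to the factorizable Lie conformal bialgebra $(\A,[\cdot_\lambda\cdot],\Delta_r)$. This shows that $(\A,[\cdot_\lambda\cdot],\mathfrak{B},\omega^I_\lambda(\cdot,\cdot))$ is a quadratic Rota-Baxter Lie conformal algebra of weight $\theta$, with $\mathfrak{B}=\theta r_-\circ I^{-1}$.

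Second, Lemma \ref{bb8} shows that $\Wh{\mathfrak{B}}=-\mathfrak{B}-\theta\id$ is again a Rota-Baxter operator of weight $\theta$ on $(\A,[\cdot_\lambda\cdot])$. It is a ${\bf k}[\partial]$-module homomorphism because $\mathfrak{B}$ and $\id$ are.

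Third, the form $\omega^I_\lambda(\cdot,\cdot)$ is unchanged, so it remains non-degenerate, symmetric and invariant. It only remains to check the compatibility condition \eqref{brblca} for $\Wh{\mathfrak{B}}$. By bilinearity,
\[
\omega^I_\lambda(a,\Wh{\mathfrak{B}}(b))+\omega^I_\lambda(\Wh{\mathfrak{B}}(a),b)+\theta\omega^I_\lambda(a,b)
=-\bigl(\omega^I_\lambda(a,\mathfrak{B}(b))+\omega^I_\lambda(\mathfrak{B}(a),b)+\theta\omega^I_\lambda(a,b)\bigr),
\]
since the two $-\theta\omega^I_\lambda(a,b)$ terms coming from $-\theta\id$ combine with the $+\theta\omega^I_\lambda(a,b)$ term to leave $-\theta\omega^I_\lambda(a,b)$. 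The right-hand side is zero by the compatibility condition for $\mathfrak{B}$ established in Theorem \ref{flrbl}.

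This is exactly the content of Remark \ref{rmk-1}. The argument has no real obstacle: the only computation is the sign bookkeeping in the identity above.

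Alternatively, one can write $\Wh{\mathfrak{B}}=\theta r_+\circ I^{-1}$, using $-\theta r_-\circ I^{-1}-\theta I\circ I^{-1}=-\theta(r_-+I)\circ I^{-1}=-\theta r_+\circ I^{-1}$. With this form, one could redo the proof of Theorem \ref{flrbl} with the roles of $r_+$ and $r_-$ interchanged, but that is unnecessary given Lemma \ref{bb8}.
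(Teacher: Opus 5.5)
Your argument is correct and is exactly the paper's proof, which cites Theorem \ref{flrbl} together with Remark \ref{rmk-1}, i.e.\ Lemma \ref{bb8} plus the sign check for \eqref{brblca} that you carry out explicitly. One minor slip in your closing aside: your own computation gives $\Wh{\mathfrak{B}}=-\theta r_+\circ I^{-1}$, not $\theta r_+\circ I^{-1}$, though this plays no role in the proof.
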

\begin{proof}
It follows directly from Theorem \ref{flrbl} and Remark \ref{rmk-1}.
\end{proof}
\begin{corollary}\label{cfrbl}
Let $(\A,[\cdot_\lambda \cdot],\Delta_r)$ be a factorizable Lie
conformal bialgebra with $I=r_+-r_-$ and $\theta\in {\bf
k}\backslash\{0\}$. Let $\mathfrak{B}$ be the Rota-Baxter
operator of weight $\theta$ on $(\A, [\cdot_\lambda \cdot])$
defined by Eq. \eqref{b}. Then
$(\A,[\cdot_\lambda\cdot]^\mathfrak{B},\Delta_I)$ is  a Lie
conformal bialgebra, where $$\langle x\otimes y,
\Delta_I(a)\rangle_{\lambda, \mu-\lambda}=\langle [x_\lambda y]^I,
a\rangle_\mu,\;\; \forall a\in \A, x,y\in \A^{*c},$$ and
$$
[x_\lambda y]^I=\theta I^{-1}[(\frac{1}{\theta}I (x))_\lambda  (\frac{1}{\theta}I (y))],\qquad \forall   x,y\in \A^{*c}.
$$
Moreover, $\frac{1}{\theta}I$ is an isomorphism of Lie conformal bialgebras from $(\A^{*c}, \A)$ to $(\A_\mathfrak{B}, (\A^{*c},[\cdot_\lambda\cdot]^I))$.
\end{corollary}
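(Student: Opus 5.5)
The strategy is to transport the structure of the Lie conformal bialgebra $(\A^{*c},\A)$ along the ${\bf k}[\partial]$-module isomorphism $\frac{1}{\theta}I:\A^{*c}\to\A$, and its dual on the other factor. The target will be the pair $(\A_\mathfrak{B},(\A^{*c},[\cdot_\lambda\cdot]^I))$. First I would record that, since the factorizable bialgebra $(\A,[\cdot_\lambda\cdot],\Delta_r)$ is a Lie conformal bialgebra with $\A$ free, Proposition \ref{LCB-equ} gives the conformal Manin triple $(\A\oplus\A^{*c},\A,\A^{*c})$ with bracket $[\cdot_\lambda\cdot]^r$ on $\A^{*c}$. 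Its roles can be swapped, because the form \eqref{eq-blf} is symmetric. So $(\A^{*c},[\cdot_\lambda\cdot]^r)$ is a Lie conformal bialgebra whose dual bracket on $(\A^{*c})^{*c}\cong\A$ is $[\cdot_\lambda\cdot]$; this is the bialgebra written $(\A^{*c},\A)$.

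The key algebraic identity is that $\frac1\theta I$ is a Lie conformal algebra isomorphism from $(\A^{*c},[\cdot_\lambda\cdot]^r)$ onto $\A_\mathfrak{B}$. From Eq. \eqref{eq-q1} in the proof of Theorem \ref{flrbl}, with $\mathfrak{B}=\theta r_-\circ I^{-1}$,
\begin{equation*}
I([x_\lambda y]^r)=[I(x)_\lambda I(y)]+[r_-(x)_\lambda I(y)]+[I(x)_\lambda r_-(y)] .
\end{equation*}
Setting $a=\frac1\theta I(x)$ and $b=\frac1\theta I(y)$, this becomes $\frac1\theta I([x_\lambda y]^r)=\theta[a_\lambda b]+[\mathfrak{B}(a)_\lambda b]+[a_\lambda\mathfrak{B}(b)]=[a_\lambda b]^\mathfrak{B}$. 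Since $I$ is bijective, $\frac1\theta I$ is an isomorphism onto $\A_\mathfrak{B}$; in particular this recovers Proposition \ref{prop-2} independently.

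Next I would check that $[\cdot_\lambda\cdot]^I$ is exactly the transport of $[\cdot_\lambda\cdot]$ along the dual map. By Lemma \ref{I}, $I^*=I$, so $(\frac1\theta I)^*=\frac1\theta I$ and $((\frac1\theta I)^*)^{-1}=\theta I^{-1}$. The requirement in the Remark after Proposition \ref{LCB-equ} is that $(\frac1\theta I)^*:(\A^{*c},[\cdot_\lambda\cdot]^I)\to(\A,[\cdot_\lambda\cdot])$ be a Lie conformal algebra isomorphism. Unwinding this gives $[x_\lambda y]^I=\theta I^{-1}[(\frac1\theta I(x))_\lambda(\frac1\theta I(y))]$, which is the stated definition. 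It follows that $[\cdot_\lambda\cdot]^I$ is a Lie conformal bracket, being the pullback of one along a module isomorphism.

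Finally, I would conclude by transport of structure. Pushing the Manin triple $(\A^{*c}\oplus\A,\A^{*c},\A)$ forward along $\frac1\theta I\oplus\theta I^{-1}$ yields $(\A\oplus\A^{*c},\A_\mathfrak{B},(\A^{*c},[\cdot_\lambda\cdot]^I))$. The bilinear form is preserved, since the scalars $\frac1\theta$ and $\theta$ cancel and $I^*=I$, and the bracket is transported, so this is again a conformal Manin triple. Proposition \ref{LCB-equ} then shows that $(\A,[\cdot_\lambda\cdot]^\mathfrak{B},\Delta_I)$, with $\Delta_I$ dual to $[\cdot_\lambda\cdot]^I$ via Eq. \eqref{eq:dual}, is a Lie conformal bialgebra. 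The isomorphism claim is exactly the criterion in that Remark. The main obstacle I expect is bookkeeping in this last step: checking that the identification $(\A^{*c})^{*c}\cong\A$ and the swapped Manin triple match the conventions $\langle x,a\rangle_\lambda=\langle a,x\rangle_{-\lambda}$. The sign of $\lambda$ in the form must come out consistently under the swap, using the symmetry of \eqref{eq-blf} and $I^*=I$.
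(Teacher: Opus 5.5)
Your proposal is correct and follows the paper's route. The paper likewise uses Eq.~\eqref{eq-q1} to show that $\frac{1}{\theta}I$ carries $[\cdot_\lambda\cdot]^r$ to $[\cdot_\lambda\cdot]^{\mathfrak{B}}$ and that $\frac{1}{\theta}I$ trivially carries $[\cdot_\lambda\cdot]^I$ to $[\cdot_\lambda\cdot]$; it then concludes from $(\frac{1}{\theta}I)^*=\frac{1}{\theta}I$ via the isomorphism criterion in the Remark after Proposition~\ref{LCB-equ}. Your explicit transport of the Manin triple along $\frac{1}{\theta}I\oplus\theta I^{-1}$, with the check via $I^*=I$ that the pairing is preserved, adds something the paper leaves implicit: it shows why $(\A,[\cdot_\lambda\cdot]^{\mathfrak{B}},\Delta_I)$ is a Lie conformal bialgebra in the first place, which that Remark presupposes.
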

\begin{proof}
Since $I$ is a Lie conformal algebra isomorphism from $(\A^{*c},[\cdot_\lambda\cdot]^r)$ to $(\A,[\cdot_\lambda\cdot])$,  for any $x$, $y\in \A^{*c}$, let $x=I^{-1}(a) $ and $y=I^{-1}(b)$ where $a,b\in \A$. By Eq. (\ref{eq-q1}), we get
\begin{align*}
    &\frac{1}{\theta}I([x_\lambda y]^r)=\frac{1}{\theta}I([I^{-1}(a)_\lambda I^{-1}(b)]^r)\\
    =&\frac{1}{\theta}( [ r_- (I^{-1}(a))_\lambda b]+[a_\lambda r_-(I^{-1}(b))]+[a_\lambda b] )\\
    =&\frac{1}{\theta^2}[a_\lambda b]^\mathfrak{B}=[ \frac{1}{\theta} a_\lambda \frac{1}{\theta} b]^\mathfrak{B}
    =[(\frac{1}{\theta}I(x))_\lambda   (\frac{1}{\theta}I(y))]^\mathfrak{B}.
\end{align*}
Therefore, $\frac{1}{\theta}I$ is a Lie conformal algebra isomorphism from $(\A^{*c},[\cdot_\lambda\cdot]^r)$ to $\A_\mathfrak{B}$.  It is obvious that  $ \frac{1}{\theta}I$ is a Lie conformal algebra isomorphism from $(\A^{*c},[\cdot_\lambda\cdot]^I)$ to $(\A,[\cdot_\lambda\cdot])$. Note that $ (\frac{1}{\theta}I)^\ast=\frac{1}{\theta}I$. Therefore, $\frac{1}{\theta}I$ is a  Lie conformal bialgebra isomorphism from $(\A^{*c},\A)$ to  $(\A_\mathfrak{B},(\A^{*c},[\cdot_\lambda\cdot]^I)) $.
\end{proof}
\begin{example} \label{excdd}
Let $(\A,[\cdot_\lambda \cdot],\Delta) $ be a Lie conformal
bialgebra, where $\A$ is free as a ${\bf k}[\partial]$-module.
Suppose that $(\A^{\ast c}, [\cdot_\lambda \cdot]^\ast)$ is the
        Lie conformal algebra in which $[\cdot_\lambda \cdot]^\ast
        $ is defined by Eq.~(\ref{eq:dual}).
Then by Proposition \ref{LCB-equ}, there is a conformal Manin
triple $(\A\oplus \A^{\ast c}, \A, \A^{\ast c})$.  Recall
\cite[Thoerem 3.10]{L} that there is a quasi-triangular Lie
conformal bialgebra on the Lie conformal algebra $\D
=\A\Join\A^{*c}$ with $r=\sum_ie_i\otimes e^*_i\in \A\otimes
\A^{*c}\subset\D\otimes\D$, where $\{e_i\}^n_{i=1}$ is a ${\bf
k}[\partial]$-basis of $\A$ and $\{e^*_i\}^n_{i=1}$ is the dual
${\bf k}[\partial]$-basis of $\A^{*c}$. Then  $r_+$ and $r_-$ are
given by
\begin{equation*}
    r_+(a,x)=x,\qquad r_-(a,x)=-a,\qquad  \forall a\in\A,\;\;x\in\A^{*c},
\end{equation*}
which means that $I(a,x)=(r_+-r_-)(a,x)=(a,x)$. Thus
$(\D,\D^{*c})$ is a factorizable Lie conformal bialgebra.  Then by
Theorem \ref{flrbl}, there is a quadratic
Rota-Baxter Lie conformal algebra
$(\D,[\cdot_\lambda\cdot],\B,\omega^I_\lambda)$ of weight  $\theta$, where
\begin{align*}
    &\B(a,x)=(\theta  r_-\circ I^{-1})(a,x)=-\theta a,\\
    &\omega^I_{\lambda}((a,x),(b,y))=\langle I^{-1}(a,x),(b,y ) \rangle_\lambda=x_\lambda(b)+y_{-\lambda}(a),\qquad \forall a,b\in \A, \;\;x,y\in \A^{*c}.
\end{align*}
\end{example}

The converse of Theorem \ref{flrbl} also holds, that is, quadratic
Rota-Baxter Lie conformal algebras of nonzero weight    give factorizable Lie conformal bialgebras.

\begin{theorem}\label{rbflca}
Let
$(\A,[\cdot_\lambda\cdot],\mathfrak{B},\omega_\lambda(\cdot,\cdot))$
be a quadratic   Rota-Baxter Lie conformal
algebra of weight  $\theta$. Let
$I_\omega:\A^{*c}\rightarrow
\A$ be the induced ${\bf k}[\partial]$-module isomorphism given by
$$\langle I^{-1}_\omega a,b
\rangle_\lambda=\omega_\lambda(a,b),\;\; \forall a, b\in \A,$$ and
$r\in \A\otimes \A$ with $r_+=\frac{1}{\theta}(\mathfrak{B}+\theta
\id)\circ I_\omega:\A^{*c}\rightarrow \A$. Then  $r= \sum_{j=1}^n \frac{1}{\theta}e_j\otimes (\B+\theta \id )I_\omega(e_j^*)\in \A\otimes \A$  corresponding to $r_+$  is a solution
of the CCYBE in $(\A,[\cdot_\lambda\cdot])$, where $\{e_j\}_{j=1}^n$ is a ${\bf k}[\partial]$-basis of $\A$ and $\{e_j^*\}_{j=1}^n$ is the dual ${\bf k}[\partial]$-basis of $\A^{*c}$.
Thus, there is  a factorizable Lie conformal bialgebra
$(\A,[\cdot_\lambda\cdot],\Delta_r)$. 


\end{theorem}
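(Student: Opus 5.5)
We reduce everything to Theorem \ref{qtl}. That theorem applies once the symmetric part of $r$ is $\A$-invariant. It then gives that $r$ solves the CCYBE if and only if $(\A^{*c},[\cdot_\lambda\cdot]^r)$ is a Lie conformal algebra and $r_-$ is a homomorphism. By Remark \ref{rmk-new}, the homomorphism condition may equivalently be checked for $r_+$.

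\textbf{Step 1: identify $r_-$ and $I$.} By Remark \ref{rmk-2}, $r$ is determined modulo $\partial^{\otimes^2}(\A\otimes\A)$ by $r_+$, and $r$ is written in the stated form. By Remark \ref{rmk-d}, $r_-=-r_+^*$.

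Since $\omega$ is symmetric, $I_\omega^*=I_\omega$, by the same computation as in Lemma \ref{I}. Condition \eqref{brblca} reads $I_\omega^{-1}\B+\B^*I_\omega^{-1}+\theta I_\omega^{-1}=0$. Hence
$$\B^*=-I_\omega^{-1}(\B+\theta\id)I_\omega.$$
Therefore
$$r_-=-\tfrac1\theta I_\omega(\B^*+\theta\id)=\tfrac1\theta\B I_\omega,\qquad I=r_+-r_-=I_\omega,$$
which is a $\mathbf k[\partial]$-module isomorphism.

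\textbf{Step 2: $\A$-invariance of the symmetric part.} By Lemma \ref{I}(\ref{item2}), it suffices to show $I_\omega\circ{\rm ad}^*(a)_\lambda={\rm ad}(a)_\lambda\circ I_\omega$. Pairing with $b\in\A$, this becomes exactly the invariance \eqref{cbf1} of $\omega$, using skew-symmetry of the bracket.

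\textbf{Step 3: $r_+$ and $r_-$ are homomorphisms.} Define a bracket on $\A^{*c}$ by transport of structure: $[x_\lambda y]':=\theta I_\omega^{-1}[x'_\lambda y']^{\B}$ with $x'=\frac1\theta I_\omega x$, $y'=\frac1\theta I_\omega y$. By Proposition \ref{prop-2}, $(\A^{*c},[\cdot_\lambda\cdot]')$ is a Lie conformal algebra. Moreover, $r_-=\B\circ(\frac1\theta I_\omega)$ and $r_+=\widehat{\B}'\circ(\frac1\theta I_\omega)$ up to sign, where $\B+\theta\id=-\widehat{\B}$. Since both $\B$ and $\widehat\B$ are homomorphisms $\A_\B\to\A$ by Lemma \ref{bb8} and Proposition \ref{prop-2} (note $\A_{\widehat\B}$ has bracket $-[\cdot_\lambda\cdot]^\B$, and the signs cancel), $r_\pm$ are homomorphisms from $[\cdot_\lambda\cdot]'$ to $\A$.

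\textbf{Step 4: the main obstacle, showing $[\cdot_\lambda\cdot]'=[\cdot_\lambda\cdot]^r$.} For $x=I_\omega^{-1}a$ and $y=I_\omega^{-1}b$, I would compute $I_\omega[x_\lambda y]^r$. Step 2 gives $I_\omega{\rm ad}^*(c)_\lambda=[c_\lambda I_\omega(\cdot)]$, so
$$I_\omega[x_\lambda y]^r=[r_+(x)_\lambda b]-[r_-(y)_{-\lambda-\partial}a].$$
Substituting Step 1, this equals
$$\tfrac1\theta\bigl([(\B a+\theta a)_\lambda b]+[a_\lambda \B b]\bigr)=\tfrac1\theta[a_\lambda b]^\B,$$
and this is precisely $I_\omega[x_\lambda y]'$. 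The care needed here is in the $-\lambda-\partial$ skew-symmetry signs; this is where I expect the bookkeeping to be delicate.

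\textbf{Conclusion.} Theorem \ref{qtl} now shows that $r$ solves the CCYBE. Hence $(\A,[\cdot_\lambda\cdot],\Delta_r)$ is quasi-triangular. The map $I=I_\omega$ is a bijective homomorphism by Remark \ref{rmk-new}, so the bialgebra is factorizable; $\A$ is free by Remark \ref{rmk-1}.
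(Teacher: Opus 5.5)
Your proposal is correct and uses the same computations as the paper's proof. These are: the identification $r_-=\frac1\theta\B I_\omega$ and $I=I_\omega$ from \eqref{brblca}; the $\A$-invariance of the symmetric part via $I_\omega\circ{\rm ad}^*(a)_\lambda={\rm ad}(a)_\lambda\circ I_\omega$ and Lemma \ref{I}; and the key identity $\frac1\theta I_\omega([x_\lambda y]^r)=[\frac1\theta I_\omega(x)_\lambda\frac1\theta I_\omega(y)]^\B$, followed by Theorem \ref{qtl}.

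The difference is how you show that $(\A^{*c},[\cdot_\lambda\cdot]^r)$ is a Lie conformal algebra. The paper checks skew-symmetry of $[\cdot_\lambda\cdot]^r$ by hand and asserts the Jacobi identity by ``a similar but longer computation''. Only afterwards does it prove the intertwining identity, to get that $r_-$ is a homomorphism. You instead transport the descendent bracket of Proposition \ref{prop-2} along the module isomorphism $\frac1\theta I_\omega$, and then show that the transported bracket equals $[\cdot_\lambda\cdot]^r$. This gives, in one stroke, the Lie conformal algebra structure and the homomorphism property of $r_-=\B\circ\frac1\theta I_\omega$ (and of $r_+=-\Wh{\B}\circ\frac1\theta I_\omega$). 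The omitted Jacobi computation becomes unnecessary, so your ordering is the more economical one: the intertwining identity the paper proves anyway already implies the Jacobi identity.

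One point in your conclusion needs fixing. Do not invoke the last sentence of Remark \ref{rmk-new} to say that $I=I_\omega$ is a homomorphism into $(\A,[\cdot_\lambda\cdot])$. A difference of two homomorphisms is not a homomorphism in general, and your own Step 4 shows $I_\omega([x_\lambda y]^r)=\frac1\theta[I_\omega(x)_\lambda I_\omega(y)]^\B$. Hence $I_\omega$ is a homomorphism into $(\A,[\cdot_\lambda\cdot])$ only if $[\B(a)_\lambda b]+[a_\lambda \B(b)]=0$ for all $a,b$. This already fails for the quadratic Rota-Baxter structure of Example \ref{excdd} when $\A$ is not abelian. There $I=\id$ and $r_-(a,x)=-a$, and a direct check gives $[(a,0)_\lambda(b,0)]^r=(-[a_\lambda b],0)$.

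What you establish (and what the paper establishes, since its proof ends the same way) is two things: $I$ is a ${\bf k}[\partial]$-module isomorphism, and $\frac1\theta I$ is a Lie conformal algebra isomorphism from $(\A^{*c},[\cdot_\lambda\cdot]^r)$ onto $\A_\B$. This is the reading of ``factorizable'' the paper actually uses in Theorem \ref{flrbl}, Corollary \ref{cfrbl} and Example \ref{excdd}, so state your conclusion in that form.
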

\begin{proof}
Since $\omega_\lambda(\cdot,\cdot)$ is a symmetric conformal bilinear form,
$(I^{-1}_\omega )^*=I^{-1}_\omega$. By Eq. \eqref{brblca}, we have
$$\mathfrak{B}^*\circ I^{-1}_\omega+I^{-1}_\omega\circ
\mathfrak{B}+\theta I^{-1}_\omega=0. $$ This implies $I_\omega \circ
\mathfrak{B}^*+\mathfrak{B}\circ I_\omega+\theta I_\omega=0$. Thus
$r_+=-\frac{1}{\theta}I_\omega\circ \mathfrak{B}^*$ and
$r_-=-r^*_+=\frac{1}{\theta}\mathfrak{B}\circ I_\omega$. Note that
$r_+-r_-=I_\omega$.

By Remark \ref{rmk-2},  according to
$r_+=\frac{1}{\theta}(\mathfrak{B}+\theta \id)\circ I_\omega $, we
set
\begin{align*}
    r&
    =\sum_j \frac{1}{\theta}e_j\otimes (\B+\theta\id)I_\omega (e_j^*).
\end{align*}
 Let $a\in \A$ and $x,y\in \A^{*c}$. Then we have
\begin{eqnarray*}
    &&\langle [x_\lambda y]^r+[y_{-\lambda-\partial}x]^r,a\rangle_\mu\\
    &&\quad=\langle {\rm ad}^*(r_+(x))_\lambda y-{\rm ad}^*(r_-(y))_{-\lambda-\partial}x+ {\rm ad}^*(r_+(y))_{\mu-\lambda} x-{\rm ad}^*(r_-(x))_{\lambda}y ,a\rangle_\mu\\
    &&\quad=-\langle y,[I_\omega (x)_\lambda a]\rangle_{\mu-\lambda}-\langle x,[I_\omega (y)_{\mu-\lambda}a]\rangle_\lambda\\
    &&\quad=\omega_{\mu-\lambda}(I_\omega (y),[a_{-\mu} I_\omega (x)])-\omega_\lambda(I_\omega (x),[I_\omega
    (y)_{\mu-\lambda}a])=0.
\end{eqnarray*}
Hence $$[x_\lambda y]^r=-[y_{-\lambda-\partial}x]^r,\;\; \forall
x, y\in \A^{*c}.$$
For the Jacobi identity, let $x,y,z\in \A^{*c}$ and $a\in \A$. A similar but longer computation shows
that
$$
\langle  [x_\lambda [y_\mu z]^r]^r-[[x_\lambda y]^r_{\lambda+\mu}z]^r-[y_\mu[x_\lambda z]^r]^r ,a\rangle_\nu=0.
$$
 
Therefore, $
(\A^{*c},[\cdot_\lambda\cdot]^r)$ is a Lie conformal algebra. By
the invariant property of the conformal bilinear form
$\omega_\lambda(\cdot,\cdot)$,  $I_\omega\circ {\rm
ad}^*(a)_\lambda={\rm ad}(a)_\lambda\circ I_\omega$. Then by Lemma
\ref{I} (\ref{item2}), the symmetric part of $r$ is
$\A$-invariant.

Furthermore, we have
\begin{align*}
    I_\omega ([x_\mu y]^r)&=I_\omega ({\rm ad}^*( r_+ (x))_\lambda y -{\rm ad}^*(r_-(y))_{-\lambda-\partial}x)
    =[r_+(x)_\lambda I_\omega(y) ]-[r_-(y)_{-\lambda-\partial} I_\omega(x)]\\
    & = [r_+(x)_\lambda ( r_+-r_-)(y)]-[r_-(y)_{-\lambda-\partial}(r_+-r_-)(x)]\\
    & = [r_+(x)_\lambda r_+(y)]-[r_-(x)_\lambda r_-(y)].
\end{align*}
On the other hand, we have
\begin{align*}
    [I_\omega (x)_\lambda I_\omega(y)]^\B&=[\B( I_\omega (x))_\lambda I_\omega(y) ]+    [I_\omega (x)_\lambda \B(I_\omega(y))]+\theta   [I_\omega (x)_\lambda I_\omega(y)]\\
    & =\theta ( [r_-(x)_\lambda  (r_+-r_-)(y)]+[(r_+-r_-)(x)_\lambda r_-(y)]+[(r_+-r_-)(x)_\lambda (r_+-r_-)(y) ])\\
    & =\theta( [r_+(x)_\lambda r_+(y)]-[r_-(x)_\lambda r_-(y)]).
\end{align*}
Hence $\frac{1}{\theta}I_\omega ([x_\lambda
y]^r)=[\frac{1}{\theta}I_\omega (x)_\lambda
\frac{1}{\theta}I_\omega (y)]^\mathfrak{B}$. Since $\mathfrak{B}$
is a Lie conformal algebra homomorphism from $\A_\mathfrak{B}$ to
$(\A,[\cdot_\lambda\cdot])$, it follows that $r_-$ is a Lie
conformal algebra homomorphism. Thus by Theorem \ref{qtl}, $
(\A,[\cdot_\lambda\cdot],\Delta_r)$ is a factorizable Lie
conformal bialgebra where $\Delta_r$ is induced by
$[\cdot_\lambda\cdot]^r$.
\end{proof}


\begin{definition}
    Let $(\A,\Delta)$ be a Lie conformal coalgebra. A ${\bf k}[\partial]$-module homomorphism $\B: \A \rightarrow\A$ is called a {\bf  Rota-Baxter operator  of weight $\theta$ on $(\A,\Delta)$ } if it satisfies
    \begin{equation}\label{bd}
        (\B\otimes \B)\circ\Delta=(\B\otimes \id+\id\otimes\B )\circ\Delta \circ \B + \theta\Delta \circ \B.
    \end{equation}
    A Lie conformal coalgebra $(\A,\Delta)$ with a Rota-Baxter operator $\B$ of weight $\theta$ is called a {\bf   Rota-Baxter Lie conformal coalgebra of weight  $\theta$}, denoted by $(\A,\Delta,\B)$.
\end{definition}
\begin{remark}\label{rmk-rblcca}
    Let $(\A,\Delta,\B)$ be a   Rota-Baxter Lie conformal coalgebra of weight  $\theta$,
    where $\A$ is free as a ${\bf k}[\partial]$-module. It is straightforward to show that $ \B^*$ is a Rota-Baxter operator of weight $\theta$ on the Lie conformal algebra $(\A^{\ast c},[\cdot_\lambda \cdot]^\ast)$,
     where $[\cdot_\lambda \cdot]^\ast$ is defined by
     Eq.~(\ref{eq:dual}).
\end{remark}
\begin{definition}
    Let $(\A,[\cdot_\lambda\cdot],\Delta)$ be a Lie conformal bialgebra. A ${\bf k}[\partial]$-module homomorphism $\B: \A \rightarrow\A$ is called a {\bf  Rota-Baxter operator  of weight $\theta$ on $(\A,[\cdot_\lambda\cdot],\Delta)$ } if $\B$ is a Rota-Baxter operator of weight $\theta$ on $(\A,[\cdot_\lambda\cdot])$ and $\Wh{\B}$ is a Rota-Baxter operator of weight $\theta$ on $(\A,\Delta)$.
\end{definition}

\begin{proposition}\label{ppbtb}
    Let     $(\A,[\cdot_\lambda\cdot],\B)$ and $(\A',[\cdot_\lambda\cdot]',\B')$ be two   Rota-Baxter Lie conformal
    algebras of weight  $\theta$. Let  $(\A,\A',\rho,\sigma)$ be a matched pair of Lie conformal algebras and  $(\A\Join\A',[\cdot_\lambda\cdot]^\Join)$ be the associated Lie conformal algebra where $[\cdot_\lambda\cdot]^\Join$ defined by Eq. \eqref{mplb}.
    Then $\widetilde{\B}=\B\oplus \B':\A\Join\A'\rightarrow\A\Join\A'$ is a Rota-Baxter operator of weight $\theta$ on the Lie conformal algebra $(\A\Join\A',[\cdot_\lambda\cdot]^\Join)$ if and only if  the following conditions hold:
    \begin{align}
        \label{mrb1}&\rho(\B (a))_\lambda(\B'( x))=\B'(\rho (\B (a ))_\lambda x+\rho(a)_\lambda(\B' (x))+\theta\rho(a)_\lambda x),\\
        \label{mrb2}&\sigma(\B'(x))_\lambda(\B (a))=\B(\sigma (\B' (x))_\lambda a+\sigma(x)_\lambda(\B (a))+\theta\sigma(x)_\lambda a), \qquad \forall a\in\A,x\in\A'.
    \end{align}
\end{proposition}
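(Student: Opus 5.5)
The plan is to write out the Rota-Baxter identity for $\widetilde{\B}=\B\oplus\B'$ on a general pair of elements of $\A\Join\A'$ and sort the result into components. For $(a,x),(b,y)\in\A\Join\A'$ we must compare
\[
[\widetilde{\B}(a,x)_\lambda \widetilde{\B}(b,y)]^\Join
\quad\text{and}\quad
\widetilde{\B}\big([\widetilde{\B}(a,x)_\lambda (b,y)]^\Join+[(a,x)_\lambda\widetilde{\B}(b,y)]^\Join+\theta[(a,x)_\lambda(b,y)]^\Join\big).
\]
Both sides are bilinear, and by skew-symmetry and conformal sesquilinearity it suffices to test three types of pairs: $(a,0),(b,0)$; $(0,x),(0,y)$; and the mixed pair $(a,0),(0,y)$. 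The pair $(0,y),(a,0)$ then follows from the mixed case via $[u_\lambda v]^\Join=-[v_{-\lambda-\partial}u]^\Join$, since $\widetilde{\B}$ commutes with $\partial$.

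\emph{First two types.} For two elements of $\A$, Eq.~\eqref{mplb} gives $[(a,0)_\lambda(b,0)]^\Join=([a_\lambda b]^1,0)$. Hence the identity is exactly the Rota-Baxter identity for $\B$ on $\A$, which holds by assumption. The same argument applies to $(0,x),(0,y)$ and $\B'$ on $\A'$. So the only genuine content sits in the mixed terms.

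\emph{Mixed type.} For $(a,0)$ and $(0,y)$, Eq.~\eqref{mplb} gives
\[
[(a,0)_\lambda(0,y)]^\Join=\big(-\sigma(y)_{-\lambda-\partial}a,\ \rho(a)_\lambda y\big).
\]
\begin{itemize}
\item The left side is $\big(-\sigma(\B'y)_{-\lambda-\partial}\B a,\ \rho(\B a)_\lambda \B' y\big)$.
\item The right side is
\[
\Big(-\B\big(\sigma(y)_{-\lambda-\partial}\B a+\sigma(\B'y)_{-\lambda-\partial}a+\theta\sigma(y)_{-\lambda-\partial}a\big),\ \B'\big(\rho(\B a)_\lambda y+\rho(a)_\lambda\B' y+\theta\rho(a)_\lambda y\big)\Big).
\]
\end{itemize}
Equating the $\A'$-components (with $x=y$) gives exactly Eq.~\eqref{mrb1}. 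Equating the $\A$-components gives Eq.~\eqref{mrb2}, with $x=y$ and the parameter $-\lambda-\partial$ in place of $\lambda$. Because $\B$ is a $\mathbf{k}[\partial]$-module map, the substitution $\lambda\mapsto-\lambda-\partial$ is compatible: the identity holding as a polynomial in $\lambda$ for all arguments is equivalent to it holding after this substitution. The mixed case is therefore equivalent to \eqref{mrb1} together with \eqref{mrb2}.

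\emph{Conclusion.} Conversely, if $\widetilde{\B}$ is a Rota-Baxter operator, specializing to mixed pairs recovers \eqref{mrb1} and \eqref{mrb2}. If these hold, bilinearity together with the three cases above gives the identity for all pairs.

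The main obstacle, such as it is, lies in the step justifying the replacement of the formal parameter $-\lambda-\partial$ by $\lambda$ in the $\A$-component. The cleanest way is to note that the map $\lambda\mapsto-\lambda-\partial$ is an invertible change of variables on $\A[\lambda]$ commuting with the $\mathbf{k}[\partial]$-linear map $\B$. Everything else is routine bookkeeping.
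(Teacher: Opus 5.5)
Your proposal is correct and follows the paper's proof. The paper likewise evaluates the Rota-Baxter identity for $\widetilde{\B}$ only on the mixed pair $(a,0),(0,x)$: Eq.~\eqref{mrb1} comes from the $\A'$-component, and Eq.~\eqref{mrb2}, taken at $-\lambda-\partial$, comes from the $\A$-component. Your remarks on the pure pairs, on the reversed mixed pair via skew-symmetry, and on the invertibility of the substitution $\lambda\mapsto-\lambda-\partial$ make explicit steps that the paper leaves to the reader.
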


\begin{proof}
 
    For all $a\in\A$ and $x\in\A'$,  we have
        \begin{align*}
            &[\widetilde{\B}( a,0)_\lambda\widetilde{\B}(0, x) ]^\Join=(-\sigma(\B'x)_{-\lambda-\partial}(\B a),\rho(\B a)_\lambda (\B' x) ),\\
            &\widetilde{\B}(  [\widetilde{\B}(a,0)_\lambda (0,x)   ]^\Join+[(a,0)_\lambda\widetilde{\B}(0,x)]^\Join+\theta[(a,0)_\lambda(0,x)]^\Join      )\\
            =&(-\B(\sigma(\B' x)_{-\lambda-\partial}a+\sigma(x)_{-\lambda-\partial}(\B a)+\theta\sigma(x)_{-\lambda-\partial}a),\B'(\rho(\B a)_\lambda x+\rho(a)_\lambda (\B' x)+\theta\rho(a)_\lambda x)).
        \end{align*}
        Hence $(\A\Join\A',[\cdot_\lambda\cdot]^\Join,\widetilde{\B})$ is a Rota-Baxter Lie conformal algebra of weight  $\theta$ if and only if Eqs. \eqref{mrb1} and \eqref{mrb2} hold.
\end{proof}
\begin{theorem}\label{th-rblca-flcba}
    Let $(\A,[\cdot_\lambda\cdot],\Delta)$ be a Lie conformal bialgebra and $\B$ be a Rota-Baxter operator of weight $\theta$ on  $(\A,[\cdot_\lambda\cdot],\Delta)$  with $\theta\neq 0$, where $\A$ is free as a ${\bf k}[\partial]$-module.
    Suppose that $(\A^{\ast c}, [\cdot_\lambda \cdot]^\ast)$ is the
        Lie conformal algebra in which $[\cdot_\lambda \cdot]^\ast
        $ is defined by Eq.~(\ref{eq:dual}).
    Let $\{e_i\}_{i=1}^n$ be a ${\bf k}[\partial] $-basis of  $\A$ and $\{e_i^*\}_{i=1}^n$ be the dual ${\bf k}[\partial] $-basis of $\A^{*c}$.
    Then $(\A\Join \A^{*c},\Delta_r)$ is a factorizable Lie conformal bialgebra, where $r$ is defined by
    \begin{equation}\label{eq-r}
        r=-\frac{1}{\theta}\sum_i( \B(e_i)\otimes e_i^*+e_i^*\otimes \Wh{\B}(e_i) ).
    \end{equation}
\end{theorem}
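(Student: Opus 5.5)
Rather than checking the CCYBE for $r$ directly, I would route the proof through Theorem \ref{rbflca}: build a quadratic Rota-Baxter Lie conformal algebra of weight $\theta$ on the double $\D=\A\Join\A^{*c}$, and then identify the resulting $r$ with Eq. \eqref{eq-r} modulo $(\partial\otimes\id+\id\otimes\partial)(\D\otimes\D)$. By Remarks \ref{rmk-11} and \ref{rmk-2}, this ambiguity changes neither $\Delta_r$ nor $r_\pm$.

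\textbf{Step 1: a Rota-Baxter operator on the double.} By Proposition \ref{LCB-equ}, $(\A,\A^{*c},{\rm ad}^*,{\rm ad}^*)$ is a matched pair, and $\D$ carries the form $\omega_\lambda$ of Eq. \eqref{eq-blf}. Since $\Wh{\B}$ is a Rota-Baxter operator of weight $\theta$ on $(\A,\Delta)$, Remark \ref{rmk-rblcca} shows that $\Wh{\B}^*$ is a Rota-Baxter operator of weight $\theta$ on $(\A^{*c},[\cdot_\lambda\cdot]^*)$. Lemma \ref{bb8} then shows that $-\theta\id-\Wh{\B}^*=\B^*$ is one as well. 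I will set $\widetilde{\B}=\B\oplus(-\theta\id-\B^*)$ and apply Proposition \ref{ppbtb}. Condition \eqref{mrb1}, with $\rho={\rm ad}^*$ of $\A$ on $\A^{*c}$, is exactly the statement that $(\A^{*c},{\rm ad}^*,-\theta\id-\B^*)$ is a representation of $(\A,[\cdot_\lambda\cdot],\B)$, and Proposition \ref{pp-rbdr} gives this. Condition \eqref{mrb2} is the symmetric statement for the Rota-Baxter Lie conformal algebra $(\A^{*c},[\cdot_\lambda\cdot]^*,\B'')$, where $\B''=-\theta\id-\B^*$, acting coadjointly on $\A=(\A^{*c})^{*c}$ with operator $-\theta\id-\B''^*=\B$. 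Proposition \ref{pp-rbdr} applied to $\A^{*c}$ gives this, using that the coadjoint action of $\A^{*c}$ on $\A$ is the $\sigma$ of the matched pair. I expect this identification of ${\rm ad}^*$ with the matched-pair actions, including the double-dual bookkeeping, to be the main obstacle. If it gets messy, I would instead verify \eqref{mrb2} by pairing against $\A^{*c}$ and using the defining condition \eqref{bd} for $\Wh{\B}$.

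\textbf{Step 2: quadraticity.} I need to check Eq. \eqref{brblca} for $\omega_\lambda$ and $\widetilde{\B}$. Expanding, $\omega_\lambda((a,x),\widetilde{\B}(b,y))+\omega_\lambda(\widetilde{\B}(a,x),(b,y))+\theta\omega_\lambda((a,x),(b,y))$ splits into terms of the shape $\langle x,\B b\rangle_\lambda+\langle(-\theta-\B^*)x,b\rangle_\lambda+\theta\langle x,b\rangle_\lambda$. Each such combination is $0$ by the definition of $\B^*$, and the $y,a$ terms cancel in the same way. So $(\D,[\cdot_\lambda\cdot]^\Join,\widetilde{\B},\omega_\lambda)$ is quadratic Rota-Baxter of weight $\theta$, and Theorem \ref{rbflca} yields a factorizable Lie conformal bialgebra $(\D,\Delta_{r'})$ with $r'_+=\frac1\theta(\widetilde{\B}+\theta\id)\circ I_\omega$.

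\textbf{Step 3: identify $r'$ with $r$.} The form $\omega_\lambda$ is the canonical pairing, so $I_\omega$ is the canonical identification of $\D^{*c}$ with $\D$, swapping the roles of $\A$ and $\A^{*c}$, exactly as in Example \ref{excdd}. Hence $r'_+(e_i^*\text{-part})=\frac1\theta(\B+\theta\id)e_i=-\frac1\theta\Wh{\B}(e_i)$ on the $\A$ side. On the $\A^{*c}$ side, $r'_+=\frac1\theta(-\B^*)$ on dual basis elements. I will compute $r_+$ for Eq. \eqref{eq-r} using Eq. \eqref{Eq-r} and check that it matches. This needs the expansion $\sum_i e_i^*\otimes\B(e_i)\equiv\sum_i\B^*(e_i^*)\otimes e_i$, which follows from pairing with a basis, together with Remark \ref{rmk-2}. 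Then $r\equiv r'$ modulo $(\partial\otimes\id+\id\otimes\partial)(\D\otimes\D)$, so $\Delta_r=\Delta_{r'}$, and factorizability transfers because it depends only on $r_\pm$. A sign check on $r_-=-r_+^*$ completes the argument.
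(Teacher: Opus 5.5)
Your proposal is correct and follows the paper's proof. The paper also puts $\widetilde{\B}=\B\oplus\Wh{\B^*}$ on $\A\Join\A^{*c}$, which is your $\B\oplus(-\theta\id-\B^*)$; it cites Proposition \ref{ppbtb}, checks the same quadraticity identity for $\omega_\lambda$, and invokes Theorem \ref{rbflca}. Your use of Proposition \ref{pp-rbdr} (on $\A$ and on $\A^{*c}$) to verify \eqref{mrb1}--\eqref{mrb2}, and your identification of $r$ with the element from Theorem \ref{rbflca} modulo $(\partial\otimes\id+\id\otimes\partial)(\D\otimes\D)$, are sound and fill in details the paper leaves implicit.
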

\begin{proof}
    By Proposition \ref{LCB-equ}, $(\A\Join \A^{*c},[\cdot_\lambda\cdot]^\Join,\omega_{\lambda}(\cdot,\cdot))$ is a  Lie conformal algebra  with a non-degenerate symmetric invariant conformal bilinear form $\omega_{\lambda}(\cdot,\cdot)$. By Proposition \ref{ppbtb}, $\widetilde{\B}=\B\oplus \Wh{\B^*}$ is a Rota-Baxter operator of weight $\theta$ on $(\A\Join \A^{*c},[\cdot_\lambda\cdot]^\Join)$.  For all $a,b\in \A$ and $x,y\in \A^{*c}$, we have
    \begin{align*}
        &\omega_\lambda((a,x),\widetilde{\B} (b,y))+\omega_\lambda(\widetilde{\B}(a,x),(b,y))+\theta\omega_\lambda((a,x),(b,y))\\
        =&\langle x,\B(b)\rangle_\lambda+\langle \Wh{\B^*}(y),a\rangle_{-\lambda}+\langle \Wh{\B^*},b\rangle_\lambda+\langle y,\B(a)\rangle_{-\lambda}+\theta\langle x,b\rangle_\lambda+\theta \langle
        y,a\rangle_{-\lambda}=0.
    \end{align*}
   Hence $(\A\Join \A^{*c},[\cdot_\lambda\cdot]^\Join,\B,\omega_{\lambda}(\cdot,\cdot))$ is a quadratic Rota-Baxter Lie conformal algebra of weight  $\theta$.
    Then it follows straightforwardly from Theorem \ref{rbflca}.
\end{proof}

\begin{corollary}\label{cor-qrblca-r}
    Let $(\A,[\cdot_\lambda \cdot],\B)$ be a Rota-Baxter Lie conformal algebra of weight  $\theta$ with $\theta\neq 0$, where $\A$ is free as a ${\bf k}[\partial]$-module. Let $\{e_i\}_{i=1}^n$ be a ${\bf k}[\partial] $-basis of  $\A$ and $\{e_i^*\}_{i=1}^n$ be the dual ${\bf k}[\partial] $-basis of $\A^{*c}$. Then $(\A\ltimes_{{\rm ad}^*} \A^{*c},\Delta_r)$ is a factorizable Lie conformal bialgebra where $r$ is defined by Eq. \eqref{eq-r}.
\end{corollary}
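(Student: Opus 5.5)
The plan is to realize this corollary as the special case of Theorem \ref{th-rblca-flcba} where the cobracket on $\A$ is trivial. Take $\Delta=0$ on $\A$. Then $(\A,[\cdot_\lambda\cdot],0)$ is a Lie conformal bialgebra: the coalgebra axioms and the compatibility condition hold trivially since both sides vanish. The dual bracket $[\cdot_\lambda\cdot]^\ast$ defined by Eq.~(\ref{eq:dual}) is then zero, so $\A^{*c}$ is an abelian Lie conformal algebra. In the matched pair $(\A,\A^{*c},{\rm ad}^*,{\rm ad}^*)$ of Proposition~\ref{LCB-equ}, the coadjoint action of the abelian $\A^{*c}$ on $\A$ vanishes. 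Reading off Eq.~(\ref{mplb}), $\A\Join\A^{*c}$ is exactly the semi-direct product $\A\ltimes_{{\rm ad}^*}\A^{*c}$ of Eq.~(\ref{eq-rep}).

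Next I check that $\B$ is a Rota-Baxter operator of weight $\theta$ on the bialgebra $(\A,[\cdot_\lambda\cdot],0)$. It is one on $(\A,[\cdot_\lambda\cdot])$ by hypothesis. Moreover $\Wh{\B}$ satisfies Eq.~(\ref{bd}) for $\Delta=0$ trivially, since every term contains $\Delta$. All hypotheses of Theorem~\ref{th-rblca-flcba} therefore hold with $\theta\neq 0$ and $\A$ free. The theorem then gives that $(\A\Join\A^{*c},\Delta_r)=(\A\ltimes_{{\rm ad}^*}\A^{*c},\Delta_r)$ is factorizable with $r$ as in Eq.~(\ref{eq-r}).

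As a consistency check I would also verify the ingredient used inside the proof of Theorem~\ref{th-rblca-flcba}. This is that $\B\oplus\Wh{\B^*}$ is a Rota-Baxter operator on the semi-direct product, i.e.\ Eqs.~(\ref{mrb1}) and (\ref{mrb2}) of Proposition~\ref{ppbtb}. Here Eq.~(\ref{mrb2}) is vacuous because $\sigma=0$. Eq.~(\ref{mrb1}) says that $(\A^{*c},{\rm ad}^*,-\theta\id-\B^*)$ is a representation of $(\A,[\cdot_\lambda\cdot],\B)$, which is exactly Proposition~\ref{pp-rbdr}, and Proposition~\ref{pp-sprb} packages this.

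The only point needing care is identifying the bracket of $\A\Join\A^{*c}$ with that of $\A\ltimes_{{\rm ad}^*}\A^{*c}$ when $\Delta=0$. This reduces to observing that $\sigma={\rm ad}^*$ of an abelian algebra is zero, so I expect no real obstacle.
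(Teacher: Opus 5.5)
Your proposal is correct and follows the paper's proof: the paper likewise observes that the $\lambda$-bracket on $\A^{*c}$ is trivial in this case (your choice $\Delta=0$), so that $\A\Join\A^{*c}=\A\ltimes_{{\rm ad}^*}\A^{*c}$, and then applies Theorem~\ref{th-rblca-flcba}. Your extra check is a useful addition: Eq.~\eqref{mrb2} is vacuous, and Eq.~\eqref{mrb1} is exactly Proposition~\ref{pp-rbdr}, which fills in a step the paper leaves implicit in the proof of that theorem.
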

\begin{proof}
    Note that in this case the $\lambda$-bracket on $\A^{\ast c}$ is trivial. Then it follows straightforwardly from Theorem \ref{th-rblca-flcba}.
\end{proof}

\subsection{Some induced structures from  factorizable Lie conformal bialgebras}
In this subsection,  since any  Rota-Baxter operator of
nonzero-weight on  a Lie conformal algebra can be rescaled by the
reciprocal of its weight to become a Rota-Baxter operator of
weight 1, we restrict our discussion only to the case of weight 1.
\begin{definition}\label{plca}\cite{YT}
A {\bf post-Lie conformal algebra} $(\A,[\cdot_\lambda\cdot],
\cdot_\lambda\cdot)$   is a Lie conformal algebra
$(\A,[\cdot_\lambda\cdot])$ equipped with a $\lambda$-product
$\cdot_\lambda\cdot: \A\times \A \rightarrow \A[\lambda]$ that
satisfies the conformal sesquilinearity such that the following
conditions hold.
\begin{align}
    \label{eqpl2}& (a_\lambda b)_{\lambda+\mu}c-(b_\mu a)_{\lambda+\mu}c+[a_\lambda b]_{\lambda+\mu}c+b_\mu(a_\lambda c)- a_\lambda(b_\mu c)=0,\\
    \label{eqpl3}& a_\lambda[b_\mu c]=[(a_\lambda b)_{\lambda+\mu}c]+[b_\mu(a_\lambda c)],  \qquad  \forall   a,b,c\in \A.
\end{align}

\end{definition}
\begin{remark}
     Recall \cite{V} that a {\bf post-Lie algebra} $(A,[\cdot,\cdot],\ast)$ is a Lie algebra $(A,[\cdot,\cdot])$ equipped with a binary operation $\ast:A\times A\rightarrow A$ satisfying
    \begin{align*}
        &a\ast[b,c]=[a\ast b,c]+[b,a\ast c],\\
        &(a\ast b-b\ast a+[a,b] )\ast c=a\ast(b\ast c)-b\ast(a\ast c), \qquad \forall a,b,c\in A.
    \end{align*}
\end{remark}

\begin{definition}
A conformal bilinear form
$\omega_{\lambda}(\cdot,\cdot)$ on a post-Lie conformal algebra
$(\A,[\cdot_\lambda\cdot],\cdot_\lambda\cdot)$ is called {\bf
invariant} if Eq. \eqref{cbf1} and the following condition hold.
\begin{equation}\label{cbf2}
    \omega_{\lambda+\mu} (a_\lambda b,c)-\omega_\lambda(a,b_\mu c)=\omega_{\lambda+\mu}(b_\mu a,c)-\omega_\mu(b,a_\lambda c) ,\qquad\forall a,b,c\in \A.
\end{equation}
A {\bf generalized pseudo-Hessian post-Lie conformal algebra}  $(\A,[\cdot_\lambda\cdot],\cdot_\lambda\cdot,\omega_{\lambda}(\cdot,\cdot))$   is a post-Lie  conformal algebra $(\A,[\cdot_\lambda\cdot],\cdot_\lambda\cdot)$ equipped with a non-degenerate symmetric invariant conformal bilinear form $\omega_{\lambda}(\cdot,\cdot)$.
\end{definition}
\begin{remark}
Recall \cite{LBG} that a {\bf generalized pseudo-Hessian post-Lie algebra} $(A,[\cdot,\cdot],\ast, (\cdot,\cdot))$ is a post-Lie algebra $(A,[\cdot,\cdot],\ast)$ equipped with a non-degenerate symmetric bilinear form satisfying
\begin{align*}
    ([a,b],c)=(a,[b,c]),\qquad (a\ast b,c)-(a,b\ast c)=(b\ast a,c) -(b,a\ast c),\qquad \forall a,b,c\in A.
\end{align*}
\end{remark}


\begin{proposition}\label{pp-rb-pl}
Let $(\A,[\cdot_\lambda\cdot],\B)$ be a Rota-Baxter Lie
conformal algebra of weight 1. Define a ${\bf k}$-bilinear map
$\cdot_\lambda\cdot:\A\times \A \rightarrow \A[\lambda]$ by
\begin{equation*}
    a_\lambda b=[\B (a)_\lambda b],\qquad \forall a,b\in \A.
\end{equation*}
Then $(\A,[\cdot_\lambda\cdot],\cdot_\lambda\cdot)$ is a post-Lie conformal algebra, which is called the {\bf induced post-Lie conformal algebra} from   $(\A,[\cdot_\lambda\cdot],\B)$. Furthermore, if there is a non-degenerate symmetric invariant conformal bilinear form $\omega_\lambda(\cdot,\cdot)$ on the Lie conformal algebra $(\A,[\cdot_\lambda\cdot])$, then there is a  generalized pseudo-Hessian post-Lie conformal algebra  $(\A,[\cdot_\lambda\cdot], \cdot_\lambda\cdot, \omega_\lambda(\cdot,\cdot)) $.
\end{proposition}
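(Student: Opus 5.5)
We verify the two parts of the statement separately: first the two post-Lie axioms for $a_\lambda b=[\B(a)_\lambda b]$, then the invariance condition \eqref{cbf2} for $\omega_\lambda(\cdot,\cdot)$.

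\emph{Conformal sesquilinearity.} Since $\B$ is a ${\bf k}[\partial]$-module homomorphism, $(\partial a)_\lambda b=[\partial\B(a)_\lambda b]=-\lambda a_\lambda b$. Likewise $a_\lambda\partial b=(\lambda+\partial)a_\lambda b$ follows from \eqref{cs}.

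\emph{Eq. \eqref{eqpl3}.} Here $a_\lambda[b_\mu c]=[\B(a)_\lambda[b_\mu c]]$. The Jacobi identity expands this into $[[\B(a)_\lambda b]_{\lambda+\mu}c]+[b_\mu[\B(a)_\lambda c]]$, which is exactly $[(a_\lambda b)_{\lambda+\mu}c]+[b_\mu(a_\lambda c)]$.

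\emph{Eq. \eqref{eqpl2}.} First compute
\[
a_\lambda b-b_\mu a+[a_\lambda b],
\]
to be read in the $\lambda+\mu$ slot. By skew-symmetry and sesquilinearity, $(b_\mu a)_{\lambda+\mu}c=[\B([b_\mu a])\ldots]$ should be read as $\B$ applied to $[\B(b)_\mu a]$ with $\mu$ identified so that $[\B(b)_{\mu} a]$ is rewritten as $-[a_{-\mu-\partial}\B(b)]$. Inside the $\lambda+\mu$ bracket, $\partial$ acts as $-(\lambda+\mu)$, so $-\mu-\partial$ becomes $\lambda$. Hence the combination in the first slot is
\[
\B\bigl([\B(a)_\lambda b]+[a_\lambda\B(b)]+[a_\lambda b]\bigr)=[\B(a)_\lambda\B(b)],
\]
by the Rota--Baxter identity of weight 1. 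The left side of \eqref{eqpl2} therefore reduces to
\[
[[\B(a)_\lambda\B(b)]_{\lambda+\mu}c]+[\B(b)_\mu[\B(a)_\lambda c]]-[\B(a)_\lambda[\B(b)_\mu c]],
\]
which vanishes by the Jacobi identity. The only delicate point in this part is the bookkeeping of the substitution $\partial\mapsto-\lambda-\mu$ when the inner $\lambda$-bracket sits in the left slot of an outer bracket. I would handle it with the standard identity $[(\,[x_{-\mu-\partial}y]\,)_{\lambda+\mu}c]=[[x_{\lambda}y]_{\lambda+\mu}c]$, which follows from sesquilinearity.

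\emph{Invariance, Eq. \eqref{cbf2}.} Eq. \eqref{cbf1} holds by hypothesis. Write $\omega_{\lambda+\mu}(a_\lambda b,c)=\omega_{\lambda+\mu}([\B(a)_\lambda b],c)$. Invariance \eqref{cbf1} gives this as $\omega_\lambda(\B(a),[b_\mu c])$. On the other side, $\omega_\lambda(a,b_\mu c)=\omega_\lambda(a,[\B(b)_\mu c])$. So the left side of \eqref{cbf2} is
\[
\omega_\lambda(\B(a),[b_\mu c])-\omega_\lambda(a,[\B(b)_\mu c]),
\]
and the right side is the same expression with $(a,\lambda)$ and $(b,\mu)$ interchanged.

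To compare the two sides, I would move everything to expressions of the form $\omega(\,\cdot\,,c)$ using invariance and symmetry:
\[
\omega_\lambda(\B(a),[b_\mu c])=\omega_{\lambda+\mu}([\B(a)_\lambda b],c),\qquad
\omega_\lambda(a,[\B(b)_\mu c])=\omega_{\lambda+\mu}([a_\lambda\B(b)],c).
\]
The latter step uses $[a_\lambda \B(b)]$ with \eqref{cbf1} in the form $\omega_{\lambda+\mu}([a_\lambda x],c)=\omega_\lambda(a,[x_\mu c])$. Thus the left side equals $\omega_{\lambda+\mu}\bigl([\B(a)_\lambda b]-[a_\lambda\B(b)],c\bigr)$.

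The right side is the analogous quantity with $a,b$ swapped. Skew-symmetry, with $\partial\mapsto-\lambda-\mu$ inside $\omega_{\lambda+\mu}(\cdot,c)$, turns it into $\omega_{\lambda+\mu}\bigl(-[b_{\mu}\ldots]\bigr)$, that is, $\omega_{\lambda+\mu}\bigl(-[a_\lambda\B(b)]+[\B(a)_\lambda b],c\bigr)$. The two sides agree, so \eqref{cbf2} holds without needing any compatibility between $\B$ and $\omega$.

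The main obstacle is again purely in the conformal index bookkeeping. Each application of skew-symmetry inside $\omega_{\lambda+\mu}(\cdot,c)$ must be tracked carefully, using $\omega_\nu(\partial x,c)=-\nu\,\omega_\nu(x,c)$ to replace $\partial$ by $-\lambda-\mu$.
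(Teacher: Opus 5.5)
Your proof is correct. It covers more than the paper's proof, and it organizes the invariance check differently.

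For the post-Lie axioms, the paper gives no computation; it only quotes the known fact that a weight-$1$ Rota--Baxter operator induces a post-Lie conformal structure. Your direct verification makes the argument self-contained. You use Jacobi for \eqref{eqpl3}. For \eqref{eqpl2}, you rewrite $(b_\mu a)_{\lambda+\mu}c$ as $-[\B([a_\lambda \B(b)])_{\lambda+\mu}c]$ via skew-symmetry with $\partial\mapsto-\lambda-\mu$, then apply the weight-$1$ Rota--Baxter identity and Jacobi. Your intermediate expression $[\B([b_\mu a])\ldots]$ is garbled; the correct term is $[\B([\B(b)_\mu a])_{\lambda+\mu}c]$. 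You correct this in the next clause, and the end result is right.

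For \eqref{cbf2}, the paper proves the stronger left-invariance \eqref{cbf3}, $\omega_{\lambda+\mu}(a_\lambda b,c)=-\omega_\mu(b,a_\lambda c)$, in one line. It applies skew-symmetry to $[\B(a)_\lambda b]$ inside $\omega_{\lambda+\mu}(\cdot,c)$ and then uses \eqref{cbf1}. After that, \eqref{cbf2} is just a rearrangement.

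Your route shows instead that the left side of \eqref{cbf2} equals $\omega_{\lambda+\mu}(a_\lambda b,c)+\omega_{\lambda+\mu}(b_\mu a,c)$, which is symmetric under $(a,\lambda)\leftrightarrow(b,\mu)$. It uses the same two ingredients. In fact, your chain $\omega_\lambda(a,b_\mu c)=\omega_{\lambda+\mu}([a_\lambda\B(b)],c)=-\omega_{\lambda+\mu}(b_\mu a,c)$ is exactly \eqref{cbf3} with the roles of $a$ and $b$ swapped.

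It is worth stating that identity explicitly, because it is the more useful conclusion. The paper's subsequent remark and the ``special'' part of Proposition~\ref{phpl-pppl} (hence Theorem~\ref{th-ias}) depend on left-invariance, not merely on \eqref{cbf2}.
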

\begin{proof}
The first conclusion follows directly from \cite{YT}.  
In addition, for all $a,b,c\in \A$, we have
\begin{align*}
    \omega_{\lambda+\mu}(a_\lambda b,c)=-\omega_{\lambda+\mu}([b_\mu \B(a)],c)=-\omega_{\mu}(b,[\B(a)_\lambda c]) =-\omega_\mu(b,a_\lambda
    c).
\end{align*}
Hence Eq. \eqref{cbf2} holds and thus
$(\A,[\cdot_\lambda\cdot],\cdot_\lambda\cdot,
\omega_\lambda(\cdot,\cdot)) $ is a generalized pseudo-Hessian
post-Lie conformal algebra.
\end{proof}

\begin{remark}
A conformal bilinear form $\omega_{\lambda}(\cdot,\cdot)$ on a post-Lie conformal algebra $(\A,[\cdot_\lambda\cdot],\cdot_\lambda\cdot)$ is called {\bf left-invariant} if $\omega_{\lambda}(\cdot,\cdot)$ satisfies Eq. \eqref{cbf1} and
\begin{equation}\label{cbf3}
    \omega_{\lambda+\mu}(a_\lambda b,c)+\omega_\mu(b,a_\lambda c)=0, \qquad \forall a,b,c\in \A.
\end{equation}
 So for the above generalized pseudo-Hessian post-Lie conformal
algebra $(\A,[\cdot_\lambda\cdot],\cdot_\lambda\cdot,
\omega_\lambda(\cdot,\cdot)) $, $\omega_{\lambda}(\cdot,\cdot)$ is
left-invariant on $(\A,[\cdot_\lambda\cdot],\cdot_\lambda\cdot)$.
\end{remark}
Next, we introduce the notion of partial-pre-post-Lie conformal algebras as a conformal generalization of  partial-pre-post-Lie algebras. 
\begin{definition} \label{df-ppplc}
A {\bf partial-pre-post-Lie conformal algebra (pp-post-Lie
conformal algebra)}
$(\A,[\cdot_\lambda\cdot],\lhd_\lambda,\rhd_\lambda)$ is a  Lie
conformal algebra $(\A,[\cdot_\lambda\cdot])$ endowed with two
bilinear $\lambda$-products $\lhd_\lambda,\rhd_\lambda: \A\times\A
\rightarrow \A[\lambda]$ satisfying the conformal sesquilinearity
and the following conditions.
\begin{align}
    \label{pppl1}   &a\lhd_\lambda[b_\mu c]=[a_\lambda b]\lhd_{\lambda+\mu}c-[a_\lambda c]\lhd_{-\mu-\partial}b,\\
    \label{pppl2}   &[a_\lambda (b\lhd_\mu c+c\lhd_{-\mu-\partial} b)]=[a_\lambda c]\lhd_{-\mu-\partial}b+b\lhd_\mu [a_\lambda c],\\
    \label{pppl3}   &a\rhd_\lambda[b_\mu c]-[b_\mu c]\lhd_{-\lambda-\partial}a=[(a\rhd_\lambda b+a\lhd_\lambda b)_{\lambda+\mu}c]+[b_\mu (a\rhd_\lambda c-c\lhd_{-\lambda-\partial}a)],\\
    \label{pppl4}   &\{a_\lambda b\}\rhd_{\lambda+\mu}c=a\rhd_\lambda(b\rhd_\mu c)-b\rhd_\mu(a\rhd_\lambda c)+[b_\mu(a\lhd_\lambda c)]-[a_\lambda (b\lhd_\mu c)]-[a_\lambda b]\lhd_{\lambda+\mu}c,\\
    \label{pppl5}   &(a\rhd_\lambda b-b\lhd_\mu a)\lhd_{\lambda+\mu}c+b\lhd_\mu(a\rhd_\lambda c+a\lhd_\lambda c)=a\rhd_\lambda(b\lhd_\mu c)+[a_\lambda(b\lhd_\mu c)],
\end{align}
where $\{a_\lambda b\}= a\rhd_\lambda b +a\lhd_\lambda
b-b\rhd_{-\lambda-\partial} a-b\lhd_{-\lambda-\partial}
a+[a_\lambda b]$, for all $a,b,c\in \A$.
\end{definition}
\begin{remark}
   Note that the notion of partial-pre-post-Lie (pp-post-Lie) algebras was initially introduced in \cite{LBG}, where they were characterized in terms of representations of post-Lie algebras. This illustrates a ``partial"
   splitting of operations of post-Lie algebras, with the conformal generalization presented in Definition \ref{df-ppplc} extending this structure to the conformal context.

Let $(\A,[\cdot_\lambda\cdot],\lhd_\lambda,\rhd_\lambda)$ be a pp-post-Lie conformal algebra.
\begin{enumerate}
    \item   A pp-post-Lie conformal algebra is called {\bf special},  if $\lhd_\lambda$ is skew-symmetric, i.e., $a\lhd_\lambda b=-b\lhd_{-\lambda-\partial}a $ for all $a,b\in \A$.
    \item If $ \lhd_\lambda$ is trivial, then $(\A,\rhd_\lambda,[\cdot_\lambda\cdot])$ is a post-Lie conformal algebra.
\end{enumerate}

\end{remark}

\begin{proposition}\label{phpl-pppl}
Let $(\A,[\cdot_\lambda\cdot],\cdot_\lambda\cdot,\omega_{\lambda}(\cdot,\cdot))$ be a generalized pseudo-Hessian post-Lie conformal algebra. Then there is a  pp-post-Lie conformal algebra structure $(\A,[\cdot_\lambda\cdot],\lhd_\lambda,\rhd_\lambda)$ on $\A$, where $\lhd_\lambda$ and $\rhd_\lambda$ are defined by
\begin{align}
    \label{pppl-cbf1}   &\omega_{\lambda}(a\lhd_\mu b,c)=\omega_{\mu}(a,c_{-\lambda} b),\\
    \label{pppl-cbf2}   &\omega_\lambda(a\rhd_\mu b,c)=-\omega_{\lambda-\mu}(b,a_\mu c-c_{-\mu-\partial}a),     \qquad \qquad\forall a,b,c\in\A.
\end{align}
Moreover, if $\omega_{\lambda}(\cdot,\cdot) $ is left-invariant, then  $(\A,[\cdot_\lambda\cdot],\lhd_\lambda,\rhd_\lambda)$ is special.
\end{proposition}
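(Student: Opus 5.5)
The plan is a direct verification of the five identities \eqref{pppl1}--\eqref{pppl5}. Each one is paired with an arbitrary test element via the non-degenerate form $\omega$, and the defining relations \eqref{pppl-cbf1}, \eqref{pppl-cbf2} are used to move every $\lhd$ and $\rhd$ off to the other slot.

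First I check that $\lhd_\lambda$ and $\rhd_\lambda$ are well defined and conformal sesquilinear. Non-degeneracy of $\omega$ makes $I_\omega:\A^{*c}\to\A$ an isomorphism (Theorem \ref{rbflca} setting). The right-hand sides of \eqref{pppl-cbf1} and \eqref{pppl-cbf2} are conformal linear in $c$ with the correct $\lambda$-dependence, so they determine unique elements $a\lhd_\mu b$ and $a\rhd_\mu b$. Sesquilinearity in $a$ and $b$ then follows from sesquilinearity of $\cdot_\lambda\cdot$ and of $\omega$.

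It is convenient to rewrite the definitions in operator form. Let $L_\mu(a)c=a_\mu c$ and $R$ be the corresponding right multiplication. Then $\lhd$ is the $\omega$-adjoint of a right-type multiplication by the post-Lie product. The combination $a_\mu c-c_{-\mu-\partial}a$ appearing in \eqref{pppl-cbf2} is exactly the operator whose symmetrization enters the post-Lie axiom \eqref{eqpl2}. So $\rhd$ is minus the $\omega$-adjoint of $L(a)-R(a)$, in the same way that the dual representation $\rho^*$ in Proposition \ref{prop-dual} is defined. This suggests proving the conformal analogue of the classical argument in \cite{LBG}. There, pp-post-Lie structures arise from representations of the post-Lie algebra on the dual space transported through the form: $\rho^*$ for $L-R$ gives $\rhd$, and $\lhd$ comes from the adjoint of right multiplication.

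Concretely, I proceed identity by identity.
\begin{enumerate}
\item For \eqref{pppl1}, I pair with $d$. Using \eqref{pppl-cbf1} and invariance \eqref{cbf1}, the left side becomes $\omega(a,d_{-\lambda}[b_\mu c])$. I then expand with \eqref{eqpl3} to get $[(d\,b)\,c]+[b\,(d\,c)]$ and move the brackets back using invariance and \eqref{pppl-cbf1}.
\item For \eqref{pppl2}, the same procedure works, now using symmetry of $\omega$ together with \eqref{eqpl3}.
\item For \eqref{pppl3}, I use \eqref{cbf2}, which says that $L-R$ is "$\omega$-compatible" under swapping arguments, together with \eqref{eqpl3}.
\item For \eqref{pppl4} and \eqref{pppl5}, I pair with $d$, apply \eqref{pppl-cbf2} twice, and reduce to \eqref{eqpl2}, \eqref{eqpl3}, \eqref{cbf1} and \eqref{cbf2}. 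Identity \eqref{pppl4} should amount to the statement that the dual of $L-R$ is a representation of the sub-adjacent Lie conformal algebra $\{\cdot_\lambda\cdot\}$.
\end{enumerate}
The main obstacle is \eqref{pppl4}, together with keeping track of the spectral parameters: substitutions of the form $-\lambda-\mu-\partial$ arise when passing through skew-symmetry, and $\partial$ acts as $-\lambda$ on the test element. I will handle this by always evaluating on a generic fourth element $d$, and by converting every $\partial$ into an explicit parameter via sesquilinearity before comparing terms.

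For the "moreover" part, suppose $\omega$ is left-invariant \eqref{cbf3}. Then
\[
\omega_\lambda(a\lhd_\mu b,c)=\omega_\mu(a,c_{-\lambda}b)=-\omega_{\mu-\lambda}(\ldots)
\]
can be rewritten using \eqref{cbf3} and symmetry to equal $-\omega_\lambda(b\lhd_{-\mu-\partial}a,c)$, with $\partial$ replaced by the appropriate parameter. Non-degeneracy then gives skew-symmetry of $\lhd$, so the structure is special.
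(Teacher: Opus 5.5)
Your plan matches the paper's proof: the paper verifies \eqref{pppl1} exactly as you outline (pair with $d$, move $\lhd$ across by \eqref{pppl-cbf1}, move the brackets back by \eqref{cbf1} and skew-symmetry, then apply \eqref{eqpl3}), states that \eqref{pppl2}--\eqref{pppl5} follow by similar computations, and gets skew-symmetry of $\lhd_\lambda$ from \eqref{cbf3} and the symmetry of $\omega$, just as in your last paragraph. One correction to your list of ingredients: \eqref{pppl2} does not follow from symmetry, \eqref{cbf1} and \eqref{eqpl3} alone, because after pairing with $d$ both sides become values of the defect $\omega_{\lambda+\mu}(x_\lambda y,z)+\omega_\mu(y,x_\lambda z)$ with a bracket in one argument, and matching them needs \eqref{cbf2}, which says exactly that this defect is symmetric in $x,y$ (without \eqref{cbf2} the identity can fail).
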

\begin{proof}
Let $a,b,c,d\in \A$. Then we have
\begin{align*}
    &\omega_\nu( a\lhd_\lambda[b_\mu c]-[a_\lambda b]\lhd_{\lambda+\mu}c+[a_\lambda c]\lhd_{-\mu-\partial}b,d  )\\
    =&\omega_\lambda(a ,d_{-\nu}[b_\mu c]  )-\omega_{\lambda+\mu}([a_\lambda b],d_{-\nu}c)+\omega_{\nu-\mu}([a_\lambda c],d_{-\nu}b)\\
    =&\omega_\lambda(a,d_{-\nu}[b_\mu c]-[b_\mu(d_{-\nu}c)]+[c_{\nu-\mu-\lambda}(d_{-\nu}b)])\\
    =&\omega_\lambda(a,d_{-\nu}[b_\mu c]-[b_\mu(d_{-\nu}c)]-[(d_{-\nu}b)_{\mu-\nu}c ]
    )=0.
\end{align*}
Hence Eq. \eqref{pppl1} holds. Similarly, we can verify that Eqs.
\eqref{pppl2} -- \eqref{pppl5} hold and thus the first conclusion
holds. If in addition, $\omega_{\lambda}(\cdot,\cdot)$ is
left-invariant, then we have
\begin{align*}
    \omega_{-\lambda}(c\lhd_{-\lambda-\mu} b,a )=\omega_{-\lambda-\mu}(c,a_\lambda b)= \omega_{\lambda+\mu}(a_\lambda b,c)
    = -\omega_\mu(b,a_\lambda
    c)=-\omega_{-\lambda}(b\lhd_{\mu}c,a).
\end{align*}
Hence $\lhd_\lambda$ is skew-symmetric. Therefore,
$(\A,[\cdot_\lambda\cdot],\lhd_\lambda,\rhd_\lambda)$ is special.
\end{proof}

\begin{theorem}\label{th-ias}
Let $(\A,[\cdot_\lambda\cdot],\Delta_r)$ be a factorizable Lie
conformal bialgebra with $I=r_+-r_-$. Then
 there is a generalized pseudo-Hessian post-Lie conformal algebra
 $(\A,[\cdot_\lambda\cdot],\cdot_\lambda\cdot,\omega_{\lambda}(\cdot,\cdot))$
 in which $\cdot_\lambda\cdot$ and $\omega_{\lambda}(\cdot,\cdot)$
 are respectively defined by
    \begin{align*}
        a_\lambda b=[r_-\circ I^{-1}(a)_\lambda b],\qquad \omega_\lambda (a,b)=\langle I^{-1}(a),b\rangle_\lambda, \qquad\forall a,b\in \A.
    \end{align*}
Furthermore, this generalized pseudo-Hessian post-Lie conformal
algebra induces a special pp-post-Lie conformal algebra
    $(\A,[\cdot_\lambda\cdot],\lhd_\lambda,\rhd_\lambda)$ in which
    $\lhd_\lambda$ and $\rhd_\lambda$ are respectively
    defined by
    $$
    a   \lhd_\lambda b=(r_-\circ I^{-1}+ \id)([a_\lambda b]),\qquad a\rhd_\lambda b=[r_-\circ I^{-1}(a)_\lambda b]-a    \lhd_\lambda b,\qquad\forall a,b\in \A.
    $$

\end{theorem}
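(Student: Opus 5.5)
The plan is to chain three earlier results: Theorem \ref{flrbl} with $\theta=1$, then Proposition \ref{pp-rb-pl}, then Proposition \ref{phpl-pppl}. After that, the only real work is to identify the products $\lhd_\lambda,\rhd_\lambda$ defined implicitly by Eqs. \eqref{pppl-cbf1}--\eqref{pppl-cbf2} with the explicit formulas in the statement.

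First, apply Theorem \ref{flrbl} with $\theta=1$. This gives the quadratic Rota-Baxter Lie conformal algebra $(\A,[\cdot_\lambda\cdot],\B,\omega^I_\lambda(\cdot,\cdot))$ of weight $1$, where $\B=r_-\circ I^{-1}$ and $\omega^I_\lambda(a,b)=\langle I^{-1}(a),b\rangle_\lambda$; this $\omega^I$ is exactly the $\omega$ of the statement. Proposition \ref{pp-rb-pl} then shows that $a_\lambda b=[\B(a)_\lambda b]=[r_-\circ I^{-1}(a)_\lambda b]$ defines a post-Lie conformal algebra. Since $\omega$ is non-degenerate, symmetric and invariant on $(\A,[\cdot_\lambda\cdot])$, the same proposition gives a generalized pseudo-Hessian post-Lie conformal algebra, and the remark following it shows $\omega$ is left-invariant. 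Proposition \ref{phpl-pppl} then yields a special pp-post-Lie conformal algebra, with $\lhd_\lambda,\rhd_\lambda$ determined by Eqs. \eqref{pppl-cbf1}--\eqref{pppl-cbf2}. These products are well defined because $\omega$ is non-degenerate.

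It remains to compute $\lhd_\lambda$ and $\rhd_\lambda$ explicitly. For $\lhd$, I would rewrite the right side of \eqref{pppl-cbf1}:
\[
\omega_\mu(a,c_{-\lambda}b)=\omega_\mu(a,[\B(c)_{-\lambda}b]).
\]
Using invariance and skew-symmetry, I would move the bracket onto $a$ to obtain a term of the form $\omega_\lambda([a_\mu b],\B(c))$, after checking the spectral parameters with conformal sesquilinearity. Next I would apply \eqref{brblca} with $\theta=1$, namely $\omega_\lambda(x,\B(y))=-\omega_\lambda((\B+\id)(x),y)$, which turns this into $-\omega_\lambda((\B+\id)[a_\mu b],c)$ up to the correct overall sign. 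Comparing with the left side of \eqref{pppl-cbf1} and using non-degeneracy should give $a\lhd_\mu b=(r_-\circ I^{-1}+\id)([a_\mu b])$.

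For $\rhd$, I would expand the right side of \eqref{pppl-cbf2}:
\[
-\omega_{\lambda-\mu}(b,a_\mu c-c_{-\mu-\partial}a)=-\omega_{\lambda-\mu}(b,[\B(a)_\mu c])+\omega_{\lambda-\mu}(b,[\B(c)_{-\mu-\partial}a]).
\]
The first term, by left-invariance, equals $\omega_\lambda(a_\mu b,c)$, which is $\omega_\lambda([\B(a)_\mu b],c)$. The second term is the same shape as the $\lhd$ expression, with roles permuted; treating it the same way should give $-\omega_\lambda(a\lhd_\mu b,c)$. Non-degeneracy then gives $a\rhd_\mu b=[\B(a)_\mu b]-a\lhd_\mu b$.

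The main obstacle is the bookkeeping of the spectral parameters $\lambda,\mu$ and of the substitution $-\mu-\partial$ inside $\omega$, where $\partial$ acts as $-\lambda$ or $\lambda$ depending on the slot. This bookkeeping determines whether the signs come out as stated, and I would verify it carefully on each invariance step.
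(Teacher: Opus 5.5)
Your proposal is correct and follows the paper's proof, which simply chains Theorem~\ref{flrbl} (with $\theta=1$), Proposition~\ref{pp-rb-pl} and Proposition~\ref{phpl-pppl}. Your extra identification of $\lhd_\lambda$ and $\rhd_\lambda$, which the paper leaves implicit, also checks out: $\omega_\mu(a,[\B(c)_{-\lambda}b])=-\omega_\lambda([a_\mu b],\B(c))=\omega_\lambda((\B+\id)[a_\mu b],c)$, and in the $\rhd$ computation the second term equals $\omega_\lambda(b\lhd_{\lambda-\mu}a,c)=-\omega_\lambda(a\lhd_\mu b,c)$.
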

\begin{proof}
    By Theorem \ref{flrbl}, Proposition \ref{pp-rb-pl} and Proposition \ref{phpl-pppl}, this conclusion holds.
\end{proof}

\section{Factorizable Lie conformal bialgebras via factorizable Gel'fand-Dorfman bialgebras}\label{4}

We give the correspondence between factorizable GDBAs and a  class
of factorizable Lie conformal bialgebras as well as the
correspondence between  quadratic   Rota-Baxter GDAs of nonzero
weight and a class of  quadratic   Rota-Baxter Lie conformal
algebras of the same weight. Moreover, factorizable Novikov
bialgebras induce post-Novikov algebras and special
partial-pre-post-Novikov algebras which correspond to the induced
structures of certain factorizable Lie conformal bialgebras
respectively.

\subsection{A correspondence between factorizable Gel'fand-Dorfman bialgebras and a class of factorizable Lie conformal bialgebras}
Recall that a {\bf Novikov algebra} $(A,\cdot)$ is a vector space $A$ equipped with a binary operation $\cdot$ satisfying
\begin{align}
  \label{na1}  &(a\cdot b)\cdot c-a\cdot(b\cdot c)=(b\cdot a)\cdot c-b\cdot(a\cdot c),\\
    \label{na2}&(a\cdot b)\cdot c=(a\cdot c)\cdot b,\qquad    \forall a,b,c\in A.
\end{align}

A {\bf Gel'fand-Dorfman algebra (GDA)} $(A,\cdot,[\cdot,\cdot])$
is a vector space $A$ with two binary operations $\cdot$,
$[\cdot,\cdot]$ such that $(A, \cdot)$ is a Novikov algebra, $(A,
[\cdot,\cdot])$  is a Lie algebra, and they satisfy the following
compatibility condition.
\begin{equation*}
    [a\cdot b,c]-[a\cdot c,b]+[a,b]\cdot c-[a,c]\cdot b-a\cdot [b,c]=0,\qquad \forall a,b,c\in A.
\end{equation*}

\begin{proposition}  \cite[Theorem 2.2]{X} \label{pp-lc-gd}
    Let $A$ be a vector space with binary operations $\cdot$ and $[\cdot,\cdot]$. Then $(R(A)={\bf k}[\partial] A=\bfk[\partial]\ot_\bfk A,[\cdot_\lambda\cdot])$ is a Lie conformal algebra where the $\lambda$-bracket is defined by
    \begin{equation}\label{eq-lc-gd}
        [a_\lambda b]=\partial(b\cdot a)+\lambda(a\cdot b+b\cdot a)+[a,b],\qquad \forall a,b\in A,
    \end{equation}
    if and only if $(A,\cdot,[\cdot,\cdot])$ is a GDA. $(R(A), [\cdot_\lambda \cdot])$ is called the {\bf Lie conformal algebra corresponding to (the GDA)}  $(A,\cdot,[\cdot,\cdot])$.
    In particular, if $(A,[\cdot,\cdot])$ is abelian, then $(R(A), [\cdot_\lambda \cdot])$  is called the {\bf Lie conformal algebra corresponding to (the Novikov algebra)} $(A,\cdot)$.
\end{proposition}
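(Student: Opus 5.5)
The plan is to verify the three axioms of Definition \ref{lca} for the $\lambda$-bracket \eqref{eq-lc-gd} on $R(A)={\bf k}[\partial]\otimes_{\bf k}A$, and to check that each one reduces to an identity in $A$.

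Because $R(A)$ is free over ${\bf k}[\partial]$ with ${\bf k}$-basis $A$ as generators, I first extend \eqref{eq-lc-gd} from $A\times A$ to $R(A)\times R(A)$ using the conformal sesquilinearity rules. For $f,g\in{\bf k}[\partial]$ this gives $[f(\partial)a_\lambda g(\partial)b]=f(-\lambda)g(\lambda+\partial)[a_\lambda b]$. This extension is well defined and is the unique $\lambda$-bracket satisfying \eqref{cs}. Each of the remaining axioms is compatible with sesquilinearity, so it suffices to check skew-symmetry and the Jacobi identity on elements $a,b,c\in A$.

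Skew-symmetry comes first. We compute
\[
-[b_{-\lambda-\partial}a]=-\partial(a\cdot b)+(\lambda+\partial)(a\cdot b+b\cdot a)-[b,a]
=\partial(b\cdot a)+\lambda(a\cdot b+b\cdot a)-[b,a].
\]
Comparing with $[a_\lambda b]$, skew-symmetry holds if and only if $[a,b]=-[b,a]$. So skew-symmetry of the $\lambda$-bracket is equivalent to skew-symmetry of $[\cdot,\cdot]$.

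The Jacobi identity is the main computation. I expand both $[a_\lambda[b_\mu c]]$ and $[[a_\lambda b]_{\lambda+\mu}c]+[b_\mu[a_\lambda c]]$ using sesquilinearity; in the middle term a $\partial$ in the first slot is replaced by $-(\lambda+\mu)$. Each side is then a polynomial in $\partial,\lambda,\mu$ with coefficients in $A$ and total degree at most $2$. I compare coefficients of the monomials $\partial^2,\ \partial\lambda,\ \partial\mu,\ \lambda^2,\ \lambda\mu,\ \mu^2,\ \partial,\ \lambda,\ \mu,\ 1$. The expected outcome is as follows.
\begin{itemize}
\item The constant term gives the Jacobi identity of $[\cdot,\cdot]$.
\item The quadratic terms, together with their symmetrizations in $a,b$, give the Novikov identities \eqref{na1} and \eqref{na2}. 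The left-symmetry \eqref{na1} comes, for instance, from the $\partial\lambda$ and $\lambda\mu$ coefficients, and the right-commutativity \eqref{na2} from the $\lambda^2$ or $\partial^2$ coefficient.
\item The linear terms give the Gel'fand--Dorfman compatibility condition, possibly combined with its images under permutations of $a,b,c$.
\end{itemize}

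I expect this bookkeeping to be the main obstacle. For the converse direction I must also show that the linear-term identities, read together with skew-symmetry of $[\cdot,\cdot]$ and the Novikov identities, are equivalent to the single compatibility condition. I would isolate the coefficient of $\partial$ first, since it should be exactly the compatibility identity $[a\cdot b,c]-[a\cdot c,b]+[a,b]\cdot c-[a,c]\cdot b-a\cdot[b,c]=0$ up to relabeling. I would then check that the $\lambda$ and $\mu$ coefficients follow from it by permuting variables and using skew-symmetry.

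The two directions then close as follows.
\begin{itemize}
\item If $(A,\cdot,[\cdot,\cdot])$ is a GDA, every coefficient vanishes, so $R(A)$ is a Lie conformal algebra.
\item Conversely, if $R(A)$ is a Lie conformal algebra, freeness of $R(A)$ over ${\bf k}[\partial]$ lets me extract each coefficient separately. This recovers the Lie, Novikov and compatibility axioms.
\end{itemize}
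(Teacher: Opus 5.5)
Your plan is correct. The paper gives no proof of this statement; it only cites \cite[Theorem 2.2]{X}. Your direct verification is the standard argument: extend by conformal sesquilinearity, reduce to generators in $A$, and compare coefficients using freeness of $R(A)$. It is also exactly the method the paper itself uses for the analogous Propositions~\ref{pl-n} and~\ref{pppl-n}.

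Carrying out the bookkeeping confirms your predictions, with one correction.

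\textbf{Quadratic terms.} Right-commutativity comes from the $\partial^2$ coefficient, which is precisely $(c\cdot b)\cdot a=(c\cdot a)\cdot b$. The $\lambda^2$ coefficient, $a\cdot(c\cdot b)+(c\cdot b)\cdot a=(a\cdot b)\cdot c+c\cdot(a\cdot b)$, is only a consequence of both Novikov identities and does not yield right-commutativity on its own. Given right-commutativity, the $\lambda\partial$ coefficient reduces exactly to left-symmetry: $(c\cdot a)\cdot b-c\cdot(a\cdot b)=(a\cdot c)\cdot b-a\cdot(c\cdot b)$. The $\lambda^2$, $\mu^2$, $\lambda\mu$ and $\mu\partial$ coefficients then vanish automatically.

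\textbf{Linear terms.} The $\partial$ coefficient is
$[a,c\cdot b]+[b,c]\cdot a-c\cdot[a,b]-[b,c\cdot a]-[a,c]\cdot b$. By skew-symmetry of $[\cdot,\cdot]$ this is the compatibility condition with $(a,b,c)$ replaced by $(c,a,b)$. Write $C(x,y,z)$ for the compatibility expression. Then the $\lambda$ coefficient equals $C(c,a,b)-C(a,b,c)$, up to the term $[b,a\cdot c]+[a\cdot c,b]$, which vanishes by skew-symmetry. The $\mu$ coefficient follows because, once the $\lambda$-bracket is skew-symmetric, the Jacobiator is antisymmetric under $(a,\lambda)\leftrightarrow(b,\mu)$.

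This confirms both directions exactly as you outlined.
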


Recall \cite{WH} the notions of  representations of GDAs.
Let $(A, \cdot,[\cdot,\cdot])$ be a GDA.  Then $(V,l,r,\rho)$ is a
{\bf representation} of $(A, \cdot,[\cdot,\cdot])$ where $V$ is a
vector space and  $l,r,\rho: A \rightarrow {\rm End}_{\bf k} (V) $
are linear maps if and only if $(A\oplus V,\bullet
,[\cdot,\cdot])$ is a GDA where $\bullet$ and $[\cdot,\cdot]$ are
defined by
\begin{align*}
    (a,u)\bullet(b,v)=(a\cdot b,l(a)v+r(b)u),\;\; [(a,u),(b,v)]=([a,b],\rho(a)v-\rho(b)u),\;\;\forall a, b\in A, u, v\in V.
\end{align*}
 The GDA $(A\oplus V,\bullet ,[\cdot,\cdot])$  is called the {\bf semi-direct product} of $(A, \cdot,[\cdot,\cdot])$ and the representation $(V,l,r,\rho)$, which is denoted by $A\ltimes_{(l,r),\rho}V$.  In particular, if the Lie bracket and $\rho$ are trivial, then $(V,l,r)$ is a representation of the Novikov algebra $(A,\cdot)$ \cite{O}.
Note that $(A,L,R,{\rm ad} )$ is a representation of the GDA $(A,\cdot,[\cdot,\cdot])$, where $L,R :A \rightarrow {\rm End}_{\bf k} (A)$ are defined by
$L(a) b=a\cdot b$ and $ R(a)b=b\cdot a$ for all $a,b\in A$. $(A,L,R,{\rm ad} )$ is called the {\bf adjoint representation} of $(A,\cdot,[\cdot,\cdot])$.

Recall \cite{HBG0} that a {\bf Novikov coalgebra}  $(A,\alpha)$ is
a vector space $A$ equipped with a linear map $\alpha$ from $A$ to
$A\otimes A$ satisfying the following conditions.
\begin{align*}
    &(\id\otimes \alpha)\alpha(a)-(\tau\otimes \id)(\id\otimes \alpha)\alpha(a)=(\alpha\otimes \id)\alpha(a)-(\tau\otimes\id)(\alpha\otimes\id)\alpha(a),\\
    &(\tau\otimes \id)(\id\otimes \alpha)\tau\alpha(a)=(\alpha\otimes \id)\alpha(a), \qquad \forall a\in A.
\end{align*}
A {\bf Novikov bialgebra} is a triple $(A,\cdot, \alpha)$, where
$(A,\cdot)$ is a Novikov algebra, $(A,\alpha)$ is a Novikov
coalgebra and they satisfy the following conditions.
\begin{align*}
    &\alpha(a\cdot b)=(R(b)\otimes \id)\alpha(a)+(\id\otimes (L+R)(a))(\id+\tau)\alpha(b),\\
    &(\id-\tau)( (L+R)(a)\otimes \id)\alpha(b)=(\id-\tau)((L+R)(b)\otimes \id)\alpha(a),\\
    &(\id\otimes R(a)-R(a)\otimes \id)(\id+\tau)\alpha(b)=(\id\otimes R(b)-R(b)\otimes \id)(\id+\tau)\alpha(a),\qquad \forall a,b\in A.
\end{align*}
A {\bf Lie coalgebra} $(A,\beta)$ is a vector space $A$ equipped
with  a linear map $\beta$ from $A$ to $A\otimes A$ satisfying the
following conditions.
\begin{align*}
    \beta(a)=-\tau\beta(a),\qquad
    (\beta\otimes \id)\beta(a)=(\id\otimes \beta)\beta(a)-(\tau\otimes \id)(\id\otimes \beta)\beta(a), \qquad \forall a\in A.
\end{align*}
A {\bf Lie bialgebra} is a triple $(A,[\cdot,\cdot],\beta)$ where
$(A,[\cdot,\cdot])$ is a Lie algebra and $(A,\beta)$ is a Lie
coalgebra and they satisfy the following condition.
\begin{equation*}
    \beta([a,b])=({\rm ad}(a)\otimes \id+\id\otimes {\rm ad}(a)) \beta(b)-({\rm ad}(b)\otimes \id+\id\otimes {\rm ad}(b)) \beta(a),\qquad  \forall a,b\in A.
\end{equation*}
\begin{definition}\cite{HBG}
    A {\bf  Gel'fand-Dorfman bialgebra (GDBA)} is a quintuple $(A,\cdot,[\cdot,\cdot],$ $\alpha,\beta)$  where $(A,\cdot, \alpha)$ is a Novikov bialgebra and  $(A,[\cdot,\cdot],\beta)$ is a Lie bialgebra and
    the following compatibility conditions hold.
    \begin{align*}
        &(\id\otimes \beta)\alpha(a)-(\tau\otimes \id)(\id\otimes \alpha)\beta(a)+(\tau\otimes\id)(\id\otimes \beta)\tau\alpha(a)=(\alpha\otimes \id)\beta(a)+(\beta\otimes \id)\alpha(a),\\
        &\beta(b\cdot a )+\alpha([a,b])=( R(a)\otimes \id)\beta(b)+(L(b)\otimes \id+ \id\otimes (L+R)(b))\beta(a)\\
        &\qquad\qquad+({\rm ad}(a)\otimes \id+\id\otimes {\rm ad}(a))\alpha(b)-(\id\otimes {\rm ad}(b))(\id+\tau)\alpha(a),\qquad  \forall a,b\in A.\nonumber
    \end{align*}
    In addition, if  $(A,\cdot,[\cdot,\cdot],\alpha_r,\beta_r)$ is a GDBA, where $r\in A\otimes A$ and $\alpha_r$ and $\beta_r$ are defined by
    \begin{align}
        \label{nr}&\alpha_r(a)=-(L(a)\otimes \id+\id\otimes (L+R)(a))r,\\
        \label{lr}& \beta_r(a)=({\rm ad}(a)\otimes \id+\id\otimes {\rm ad}(a))r, \qquad \forall a\in A,
    \end{align}
    then $(A,\cdot,[\cdot,\cdot],\alpha_r,\beta_r)$ is called {\bf coboundary}.
\end{definition}
\begin{remark}
    Since co-products  $\alpha$ and $\beta$ imply that there is a GDA structure on $A^*$, we always denote the GDBA $(A,\cdot,[\cdot,\cdot],$ $\alpha,\beta)$ by $(A,A^*)$.
\end{remark}
In particular, a  Novikov bialgebra $(A,\cdot, \alpha_r)$  is called {\bf coboundary} when $\alpha_r$ is defined by  Eq. \eqref{nr} for
$r\in A\otimes A$ and a Lie bialgebra $(A,[\cdot,\cdot])$ is called {\bf coboundary} when $\beta_r$ is defined by Eq. \eqref{lr} for $r\in A\otimes A$.

\begin{proposition}\cite[Theorem 2.17]{HBG}\label{lbgb}
    Let $ R(A)={\bf k}[\partial]A$ be the Lie conformal algebra corresponding to a GDA $(A,\cdot,[\cdot,\cdot])$. Suppose that $\alpha$ and $\beta$ are ${\bf k}[\partial]$-module homomorphisms from $ R(A)$ to $ R(A)\otimes  R(A)$ such that $\alpha(A),~\beta(A)\subset A\otimes A$. Then $(  R(A),[\cdot_\lambda\cdot],\Delta)$ is a Lie conformal bialgebra with $\Delta$ defined by
    \begin{equation}\label{eq-lcc-gdc}
        \Delta(a)=(\partial\otimes \id)\alpha(a)-\tau(\partial\otimes \id)\alpha(a)+\beta(a), \qquad \forall a\in A,
    \end{equation}
    if and only if $(A,\cdot,[\cdot,\cdot],\alpha,\beta)$ is a GDBA. We call $(  R(A),[\cdot_\lambda\cdot],\Delta)$ the {\bf Lie conformal bialgebra corresponding to the GDBA} $(A,\cdot,[\cdot,\cdot],\alpha,\beta)$.
\end{proposition}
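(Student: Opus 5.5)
We prove the equivalence through the Manin triple and matched pair characterizations, using free ${\bf k}[\partial]$-module structure. Since $R(A)={\bf k}[\partial]\otimes A$ is free with ${\bf k}[\partial]$-basis any basis $\{e_i\}$ of $A$, the conformal dual $R(A)^{*c}$ is identified with $R(A^*)={\bf k}[\partial]A^*$ via $\langle e_i^*, e_j\rangle_\lambda=\delta_{ij}$. Because $\Delta$ is a ${\bf k}[\partial]$-module homomorphism determined by its values on $A$, the Lie conformal coalgebra axioms and the compatibility condition only need checking on generators $a,b\in A$.

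First we identify the dual bracket. Pair $\Delta(a)$ from \eqref{eq-lcc-gdc} with $x\otimes y$ for $x,y\in A^*$ through Eq.~\eqref{eq:dual}. The term $(\partial\otimes\id)\alpha(a)$ pairs to $-\mu$ times the pairing with $\alpha(a)$. The term $-\tau(\partial\otimes\id)\alpha(a)$ pairs to $-(\lambda-\mu)$ times the flipped pairing. Rewriting $\lambda$ via $\partial$ on the output, we obtain
\[
[x_\mu y]^*=\partial(y\circ x)+\mu(x\circ y+y\circ x)+[x,y]_*,
\]
where $\circ$ and $[\cdot,\cdot]_*$ are the duals of $\alpha$ and $\beta$. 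Signs must be tracked, and they should reproduce exactly the shape of Eq.~\eqref{eq-lc-gd}. By Proposition~\ref{pp-lc-gd}, $(R(A),\Delta)$ is then a Lie conformal coalgebra if and only if $(A^*,\circ,[\cdot,\cdot]_*)$ is a GDA, that is, if and only if $(A,\alpha)$ is a Novikov coalgebra, $(A,\beta)$ is a Lie coalgebra, and the first compatibility condition in the definition of a GDBA holds. Comparing this with the dualized form of the GDA axioms is routine.

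Next, for the compatibility between bracket and cobracket, we use Proposition~\ref{LCB-equ}: $(R(A),\Delta)$ is a Lie conformal bialgebra if and only if $(R(A),R(A^*),{\rm ad}^*,{\rm ad}^*)$ is a matched pair. Equivalently, the bracket on $R(A)\oplus R(A^*)=R(A\oplus A^*)$ given by \eqref{mplb} is a Lie conformal bracket. This bracket has the GDA form \eqref{eq-lc-gd} for the operations on $A\oplus A^*$ obtained from $\cdot,[\cdot,\cdot]$, their duals, and the dual (coadjoint-type) representations. By Proposition~\ref{pp-lc-gd}, it is Lie if and only if $A\oplus A^*$ is a GDA. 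The analogous known characterization of GDBAs in \cite{HBG} is that $(A,A^*)$ is a GDBA if and only if $A\oplus A^*$, with the double GDA structure and its natural bilinear form, forms a Manin triple of GDAs. Taking that as the route, the theorem reduces to matching the two doubles.

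The main obstacle is this matching. We must compute ${\rm ad}^*(a)_\lambda x$ for $a\in A$ and $x\in A^*$ explicitly in the form $\partial(\cdots)+\lambda(\cdots)+(\cdots)$. This means computing $\langle {\rm ad}^*(a)_\lambda x,b\rangle_\mu=-\langle x,[a_\lambda b]\rangle_{\mu-\lambda}$ with $[a_\lambda b]$ expanded by \eqref{eq-lc-gd}, and then reading off the coefficients. The goal is to check that they are exactly the GDA coadjoint actions $L^*,R^*,{\rm ad}^*$ combined into the mixed products $x\cdot a$, $a\cdot x$, $[a,x]$. If this direct identification proves delicate, we instead expand the bialgebra compatibility $\Delta([a_\lambda b])=a_\lambda\Delta(b)-b_{-\lambda-\partial}\Delta(a)$ directly on generators. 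We then separate coefficients of $\lambda^k$ and of $\partial$ acting on each tensor factor. The $\lambda^0$, pure-$\beta$ part should give the Lie bialgebra condition. The $\lambda$- and $\partial$-linear parts should give the three Novikov bialgebra identities. The mixed terms should give the second GDBA compatibility condition, which is the bookkeeping-heavy step.
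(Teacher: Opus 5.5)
The paper gives no proof of this statement; it is imported from \cite[Theorem 2.17]{HBG}, so there is no internal argument to compare with. Your reduction is sound.

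\textbf{The coalgebra part.} With $R(A)^{*c}\cong R(A^*)$ one gets
\[
[x_\mu y]^*=\partial(y\circ x)+\mu(x\circ y+y\circ x)+[x,y]_*,
\]
so Proposition~\ref{pp-lc-gd} turns the conformal coalgebra axioms into the GD-coalgebra axioms. Using skew-symmetry of $\beta$, the first GDBA compatibility is exactly the dual of the GDA compatibility identity.

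\textbf{The bialgebra part.} Proposition~\ref{LCB-equ}(c), combined with Proposition~\ref{pp-lc-gd} applied to $A\oplus A^*$, reduces the bialgebra condition to the double being a GDA.

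\textbf{A sign slip.} We have $\langle x,\partial a_{(1)}\rangle_\mu=\mu\langle x,a_{(1)}\rangle$. So $(\partial\otimes\id)\alpha(a)$ contributes $+\mu\langle x\otimes y,\alpha(a)\rangle$, not $-\mu$. With $-\mu$ you would not land on the shape of \eqref{eq-lc-gd}; your final formula is nonetheless the correct one.

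\textbf{The step you call the main obstacle is short.} Put $\langle L^*(a)x,b\rangle=-\langle x,a\cdot b\rangle$, $\langle R^*(a)x,b\rangle=-\langle x,b\cdot a\rangle$ and $\langle {\rm ad}^*(a)x,b\rangle=-\langle x,[a,b]\rangle$. Then $\langle {\rm ad}^*(a)_\lambda x,b\rangle_\mu=-\langle x,[a_\lambda b]\rangle_{\mu-\lambda}$ gives
\[
{\rm ad}^*(a)_\lambda x=-\partial\big(R^*(a)x\big)+\lambda L^*(a)x+{\rm ad}^*(a)x .
\]
This is the bracket \eqref{eq-lc-gd} of the semidirect product by the dual representation $(L^*+R^*,-R^*,{\rm ad}^*)$ used in Corollary~\ref{cor-rbgda-f}. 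The same formula with $A$ and $A^*$ interchanged gives ${\rm ad}^*(x)_\lambda b$. The $-{\rm ad}^*(\cdot)_{-\lambda-\partial}$ terms of \eqref{mplb} then also come out in the form \eqref{eq-lc-gd}. Hence the conformal double is $R(A\Join A^*)$ for the double GDA structure of the type in Theorem~\ref{th5}, up to the notation for the dual actions.

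\textbf{What remains is imported.} The actual content lies in the equivalence between ``$A\Join A^*$ is a GDA'' and the explicit GDBA axioms. That matched-pair characterization is proved in \cite{HBG}, not in this paper, so your argument is complete only relative to that citation. You need only the matched-pair version, not the Manin triple or the bilinear form. To make the proof self-contained, you would have to dualize the matched-pair identities into:
\begin{itemize}
\item the three Novikov bialgebra identities,
\item the Lie bialgebra identity,
\item the second mixed compatibility.
\end{itemize}
That is exactly the bookkeeping of your fallback.
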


Recall some notions related to Gel'fand-Dorfman Yang-Baxter
equation. For the details, one refers to \cite{HBG}.
Let $(A,\cdot,[\cdot,\cdot])$ be a GDA and $r=\sum_i a_i\otimes b_i\in A\otimes A$. Define
\begin{align*}
    N(r)&=\sum_{i,j}(a_i\otimes a_j\otimes ({b_i}\cdot b_j) +a_i\otimes (b_i\cdot a_j+a_j\cdot b_i)\otimes b_j+(a_i\cdot a_j)\otimes b_j\otimes b_i),\\
    C(r)&=\sum_{i,j}( [a_i,a_j]\otimes b_i\otimes b_j+a_i\otimes[b_i,a_j]\otimes b_j+a_i\otimes a_j\otimes [b_i, b_j]).
\end{align*}
The equation $N(r)=0$ is called the {\bf Novikov Yang-Baxter
equation (NYBE)} in $(A,\cdot)$. The equation $C(r)=0$ is called
the {\bf classical Yang-Baxter equation (CYBE)} in
$(A,[\cdot,\cdot])$. The equation $C(r)=N(r)=0$ is called the {\bf
Gel'fand-Dorfman Yang-Baxter equation (GDYBE)}  in
$(A,\cdot,[\cdot,\cdot])$.


Recall that a  Lie bialgebra $(A,[\cdot,\cdot],\beta_r)$ is
coboundary if and only if  the symmetric part of $r$ is {\bf
ad-invariant}, that is, $({\rm ad}(a)\otimes \id+\id\otimes {\rm
ad}(a))(r+\tau r)=0$ 
 and ${\rm ad}(a)C(r)=0$, that is,
\begin{equation*}
    ({\rm ad}(a)\otimes \id\otimes \id+\id\otimes {\rm ad}(a)\otimes \id+\id\otimes \id\otimes {\rm ad}(a) )C(r)=0,\qquad \forall a\in A.
\end{equation*}
Recall \cite{CH} that
 if  $r$ is a solution of the NYBE in $(A,\cdot)$ and the
symmetric part of $r$  satisfies the {\bf $(L,R)$-invariant}
condition, i.e.,  $$(L(a)\otimes \id+\id\otimes (L+R)(a))(r+\tau
r)=0,\;\; \forall a\in A,$$ then  $(A, \cdot, \alpha_r)$ is a
Novikov bialgebra where $\alpha_r$ is definde by Eq. (\ref{nr}),
which is called a {\bf quasi-triangular Novikov bialgebra}.
Moreover, if $r$ is skew-symmetric, then $ (A,\cdot,\alpha_r)$ is
called a {\bf triangular Novikov bialgebra}.

\begin{proposition}  \label{gdcob}
    Let $(A,\cdot,[\cdot,\cdot])$ be a GDA. Then $(A,\cdot,[\cdot,\cdot],\alpha_r,\beta_r)$ with $r=\sum_i a_i\otimes b_i\in A\otimes A$ is a coboundary GDBA
    if and only if $(A,\cdot,\alpha_r)$ is a coboundary Novikov algebra, $(A,[\cdot,\cdot],\beta_r)$ is a coboundary Lie algebra and the following equations
    hold.
    \begin{eqnarray*}
        &&(L(a)\otimes \id\otimes \id +\id\otimes(L+R)(a)\otimes\id+\id\otimes\id\otimes(L+R)(a))C(r)-(\id\otimes1\otimes {\rm ad}(a))N(r)\\
        &&\qquad -(\id\otimes {\rm ad}(a)\otimes \id)M(r) - \sum_i( (R(a_i)\otimes \id)({\rm ad}(a)\otimes \id+\id\otimes {\rm ad}(a))(r+\tau r)\otimes b_i   \\
        &&\qquad+ a_i\otimes(\id\otimes {\rm ad}(b_i))((L+R)(a)\otimes\id+\id\otimes L(a)  )(r+\tau r))=0,  \\
        &&(\id\otimes {\rm ad}(b))( (L+R)(a)\otimes\id +\id\otimes L(a))(r+\tau r)=0, \qquad \forall a,b\in A,
    \end{eqnarray*}
where
    $M(r)=N(r)+\tau_{12}N(r)- \sum_i( (L+R)(a_i)\otimes\id+\id\otimes L(a_i)  )(r+\tau r)\otimes b_i$.

\end{proposition}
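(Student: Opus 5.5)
The plan is to unpack the definition of a GDBA into its constituent conditions and then test each one on the coboundary coproducts $\alpha_r$ and $\beta_r$. By definition, $(A,\cdot,[\cdot,\cdot],\alpha_r,\beta_r)$ is a GDBA exactly when three things hold: $(A,\cdot,\alpha_r)$ is a Novikov bialgebra, $(A,[\cdot,\cdot],\beta_r)$ is a Lie bialgebra, and the two compatibility conditions of the definition hold. The first two requirements are, by definition, the coboundary Novikov and coboundary Lie bialgebra conditions in the statement. So the whole content of the proposition is that, for $\alpha=\alpha_r$ and $\beta=\beta_r$, the two compatibility conditions are equivalent to the two displayed equations.

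I would treat the second, linear, compatibility condition first. Substitute Eqs. \eqref{nr} and \eqref{lr} into
$$\beta_r(b\cdot a)+\alpha_r([a,b])-(R(a)\otimes\id)\beta_r(b)-\cdots.$$
Then use the GDA compatibility axiom, written as an operator identity among $L,R,{\rm ad}$ acting on a tensor factor (for instance ${\rm ad}(b\cdot a)-[R(a),{\rm ad}(b)]-\ldots$). All terms involving only $r$ should then cancel. The expected outcome is that everything collapses to an expression in the symmetric part $r+\tau r$. Using Lie-algebra identities and the Novikov identities \eqref{na1} and \eqref{na2}, I expect the remainder to be exactly
$$(\id\otimes{\rm ad}(b))((L+R)(a)\otimes\id+\id\otimes L(a))(r+\tau r).$$
This gives the second displayed equation.

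Next I would treat the first, cubic, compatibility condition. I would write each of
$$(\id\otimes\beta_r)\alpha_r(a),\quad (\tau\otimes\id)(\id\otimes\alpha_r)\beta_r(a),\quad (\tau\otimes\id)(\id\otimes\beta_r)\tau\alpha_r(a),\quad (\alpha_r\otimes\id)\beta_r(a),\quad (\beta_r\otimes\id)\alpha_r(a)$$
as a double sum over $i,j$ in $r=\sum_i a_i\otimes b_i$. The terms should be grouped by which operator, $L(a)$, $(L+R)(a)$ or ${\rm ad}(a)$, acts on which tensor slot. The classical pattern (as in Proposition \ref{prop-cob} and the Lie case) is that the terms organize into three pieces:
\begin{enumerate}
\item $(L(a)\otimes\id\otimes\id+\id\otimes(L+R)(a)\otimes\id+\id\otimes\id\otimes(L+R)(a))C(r)$;
\item ${\rm ad}(a)$ acting on the third slot of $N(r)$, together with ${\rm ad}(a)$ acting on the second slot of $N(r)+\tau_{12}N(r)$, where the latter is corrected by symmetric-part terms to give $M(r)$;
\item leftover terms in which $r+\tau r$ appears inside, paired with $a_i$ or $b_i$.
\end{enumerate}
To produce this grouping I would repeatedly use the GDA compatibility identity and the Leibniz-type rules $L(x)[y,z]$, ${\rm ad}(x)(y\cdot z)$ to move ${\rm ad}(a)$ past the products in $N(r)$ and $L(a)$ past the brackets in $C(r)$. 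Whatever fails to telescope should then be recognizable as the explicit sum over $i$ in the first displayed equation.

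The main obstacle is this cubic bookkeeping: there are roughly two dozen index-$(i,j)$ terms, and the permutations $\tau$ and $\tau_{12}$ must be matched against the slots of $N(r)$ and $C(r)$. My strategy is to first verify the identity in the special case where $r$ is skew-symmetric. There all symmetric-part terms vanish, so the identity should reduce to comparing operator-acted $C(r)$ against ${\rm ad}$-acted $N(r)$. I would then track the correction terms by writing $\tau r=-r+(r+\tau r)$ wherever a flipped $r$ occurs, and collecting them at the end. Once both compatibility conditions are shown equivalent to the displayed equations, the equivalence claimed in the proposition follows.
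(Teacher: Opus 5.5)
Your plan is correct, but it is not the paper's route. The paper's proof is a one-line appeal to \cite[Theorem 3.3]{HBG}, whereas you redo that computation directly.

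Your reduction is right. The Novikov-bialgebra and Lie-bialgebra parts of the GDBA axioms are, by definition, the first two hypotheses, so everything rests on the two compatibility conditions. The direct computation does go through, condition by condition and for arbitrary $r$.

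\emph{Linear condition.} The terms of the form $(X_1\otimes\id+\id\otimes X_2)r$ vanish by the Gel'fand--Dorfman compatibility identity alone, taken with arguments $(b,a,c)$ and $(c,a,b)$, plus skew-symmetry of the bracket. The mixed terms $X\otimes Y$ cancel outright. Rewriting $(\id+\tau)\alpha_r(a)$ via $\tau r=(r+\tau r)-r$ leaves exactly $-(\id\otimes{\rm ad}(b))((L+R)(a)\otimes\id+\id\otimes L(a))(r+\tau r)$. The Novikov identities you invoke are not needed.

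\emph{Quadratic condition.} The left side minus the right side equals minus the first displayed expression, identically; read the statement's $\id\otimes 1\otimes{\rm ad}(a)$ as $\id\otimes\id\otimes{\rm ad}(a)$. To see this, expand the five terms and the displayed expression; the symmetric-part pieces coming from $M(r)$ and from the explicit sums partly cancel among themselves. Terms in which two slots each carry a binary product or bracket then match verbatim after relabeling $i\leftrightarrow j$. Terms in which one slot carries a ternary expression cancel slot-wise by the compatibility identity; for example, the terms of shape $X\otimes b_i\otimes b_j$ sum to exactly that identity with arguments $(a,a_j,a_i)$. Your detour through the skew-symmetric case is therefore unnecessary.

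\emph{Comparison.} Your route gives a self-contained proof and shows more than the citation does: each compatibility condition is individually equivalent to its displayed equation, using only the compatibility identity and skew-symmetry, not Jacobi, the Novikov identities, or the bialgebra hypotheses. The paper's route buys brevity, at the cost of relying on the external theorem and on its conditions being transcribed faithfully.
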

\begin{proof}
   It  follows directly from \cite[Theorem 3.3]{HBG}.
\end{proof}


\begin{corollary}\label{rmk-m2}
Let $(A,\cdot,[\cdot,\cdot])$ be a GDA and $r\in A\otimes A$. If
$r$ is a solution of the GDYBE  in $(A,\cdot,[\cdot,\cdot])$ and
the symmetric part of $r$  is   $(L,R)$-invariant and {\rm
ad}-invariant, then $(A,\cdot,[\cdot,\cdot],\alpha_r,\beta_r)$ is
a  GDBA, where $\alpha_r$ and $\beta_r$ are defined by Eqs.~{\rm
(\ref{nr})} and {\rm (\ref{lr})} respectively.
\end{corollary}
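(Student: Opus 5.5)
The plan is to apply Proposition~\ref{gdcob} directly. Assume that $r$ is a solution of the GDYBE, i.e.\ $C(r)=N(r)=0$, and that its symmetric part $r+\tau r$ is both $(L,R)$-invariant and ad-invariant. The task is to check that each hypothesis listed in Proposition~\ref{gdcob} then holds.

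First I would treat the two "component" bialgebras. For the Lie part, ad-invariance of $r+\tau r$ is given, and $C(r)=0$ trivially yields $({\rm ad}(a)\otimes\id\otimes\id+\id\otimes{\rm ad}(a)\otimes\id+\id\otimes\id\otimes{\rm ad}(a))C(r)=0$. By the recalled characterization, $(A,[\cdot,\cdot],\beta_r)$ is therefore a coboundary Lie bialgebra. For the Novikov part, $N(r)=0$ together with $(L,R)$-invariance of $r+\tau r$ means, by the result recalled from \cite{CH}, that $(A,\cdot,\alpha_r)$ is a quasi-triangular, hence coboundary, Novikov bialgebra.

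Next come the two compatibility equations of Proposition~\ref{gdcob}. The second one is $(\id\otimes{\rm ad}(b))((L+R)(a)\otimes\id+\id\otimes L(a))(r+\tau r)=0$. Since $r+\tau r$ is symmetric, I can apply $\tau$ to the $(L,R)$-invariance identity $(L(a)\otimes\id+\id\otimes(L+R)(a))(r+\tau r)=0$. This gives $((L+R)(a)\otimes\id+\id\otimes L(a))(r+\tau r)=0$, so the inner expression already vanishes before ${\rm ad}(b)$ is applied.

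For the first equation I would check it term by term. The $C(r)$ and $N(r)$ terms vanish by assumption. In $M(r)$, the pieces $N(r)$ and $\tau_{12}N(r)$ vanish, and the sum $\sum_i((L+R)(a_i)\otimes\id+\id\otimes L(a_i))(r+\tau r)\otimes b_i$ vanishes by the flipped invariance identity just obtained. In the last sum, $({\rm ad}(a)\otimes\id+\id\otimes{\rm ad}(a))(r+\tau r)=0$ by ad-invariance, and the other summand contains $((L+R)(a)\otimes\id+\id\otimes L(a))(r+\tau r)=0$ once more. Every term is zero, so Proposition~\ref{gdcob} gives that $(A,\cdot,[\cdot,\cdot],\alpha_r,\beta_r)$ is a GDBA.

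The only delicate point is making sure the flipped form of the $(L,R)$-invariance identity is justified. This rests on $\tau(r+\tau r)=r+\tau r$ and on $\tau(X\otimes Y)\tau=Y\otimes X$, so it is routine. No real obstacle is expected.
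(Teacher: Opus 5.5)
Your proposal is correct and takes the same route as the paper: the paper simply says the corollary follows directly from Proposition~\ref{gdcob}. You have written out the term-by-term check that the paper leaves implicit, including the flipped form $((L+R)(a)\otimes\id+\id\otimes L(a))(r+\tau r)=0$ of $(L,R)$-invariance obtained by applying $\tau$, which kills $M(r)$, the last sum, and the second equation.
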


\begin{proof}
It follows directly from Proposition \ref{gdcob}.
\end{proof}

\begin{definition}
Let $(A,\cdot,[\cdot,\cdot])$ be a GDA. If $r\in A\otimes A$ is a
solution of the GDYBE in $(A,\cdot,[\cdot,\cdot])$ and the
symmetric part of  $r$ is both {\rm ad}-invariant and
$(L,R)$-invariant, then we say that the GDBA
$(A,\cdot,[\cdot,\cdot],\alpha_r,\beta_r)$ with $\alpha_r$ and
$\beta_r$ defined by Eqs. \eqref{nr} and \eqref{lr} respectively
is a {\bf quasi-triangular GDBA}.   In addition,
$(A,\cdot,[\cdot,\cdot],\alpha_r,\beta_r)$ is called a {\bf
triangular GDBA}, if $r$ is a skew-symmetric solution of the GDYBE
in $(A,\cdot,[\cdot,\cdot])$.

\end{definition}



\begin{proposition}
    Let $(A,\cdot,[\cdot,\cdot])$ be a GDA and the symmetric part of $r=\sum_i a_i\otimes b_i \in A\otimes A$ be {\rm ad}-invariant and $(L,R)$-invariant. Then $r$ is a solution of the GDYBE in $(A,\cdot,[\cdot,\cdot])$ if and only if the maps
    $r^G_+$, $r^G_-:A^{*}\rightarrow A$ are GDA homomorphisms, where
    \begin{equation*}
        r^G_+(x)=\sum_i \langle x,a_i\rangle b_i,\qquad  r^G_-(x)=-\sum_i \langle x,b_i\rangle a_i,\qquad \forall x\in A^*.
    \end{equation*}
In this case,     $(A^{*},\cdot_r,[\cdot,\cdot]_r)$ is a GDA, where $\cdot_r$ and $[\cdot,\cdot]_r$ are defined by
    \begin{equation*}
        [x,y]_r={\rm ad}^*(r^G_+(x))y-{\rm ad}^*(r^G_-(y))x,\qquad x\cdot_r y=(L^*+R^*)(r^G_+(x))y-R^*(r^G_-(y))x,\qquad
        \forall x,y\in A^*.
    \end{equation*}
\end{proposition}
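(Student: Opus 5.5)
The plan is to transfer Theorem \ref{qtl} to the GDA level through the correspondence of Proposition \ref{pp-lc-gd}, or, since everything is finite-dimensional and non-conformal, to redo its argument directly for the two operations. I would take the direct route. It splits into the Lie part and the Novikov part, and each part decouples into a separate identity.

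\emph{Lie part.} For the Lie bracket this is the classical fact for Lie algebras. Pairing with $x\otimes y\otimes z\in A^*\otimes A^*\otimes A^*$ gives
$$\langle x\otimes y\otimes z, C(r)\rangle = \langle x, [r^G_-(y),r^G_-(z)] - r^G_-([y,z]_r)\rangle$$
up to sign and ordering conventions, and similarly for $r^G_+$. Concretely, I expand $[r^G_-(y),r^G_-(z)]$ and $r^G_-({\rm ad}^*(r^G_+(y))z-{\rm ad}^*(r^G_-(z))y)$ term by term, exactly as in the proof of Theorem \ref{qtl} with $\lambda,\partial$ set to $0$. The ad-invariance of $r+\tau r$ is what lets one write $r^G_+=r^G_-+I^G$ with $I^G$ commuting with the coadjoint actions (the analogue of Lemma \ref{I}). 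From this, $r^G_-$ is a Lie homomorphism if and only if $r^G_+$ is (Remark \ref{rmk-new}). The same commutation property gives skew-symmetry of $[\cdot,\cdot]_r$ and reduces its Jacobi identity to $C(r)=0$ paired suitably.

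\emph{Novikov part.} I would prove the analogous identity
$$\langle x\otimes y\otimes z, N(r)\rangle = \pm\langle x, r^G_-(y)\cdot r^G_-(z) - r^G_-(y\cdot_r z)\rangle$$
(with the appropriate placement of $x,y,z$ in the three tensor slots). The steps are:
\begin{enumerate}
\item Write $y\cdot_r z=(L^*+R^*)(r^G_+(y))z-R^*(r^G_-(z))y$.
\item Substitute $r^G_+=r^G_-+I^G$.
\item Use the $(L,R)$-invariance of $r+\tau r$, namely $(L(a)\otimes\id+\id\otimes(L+R)(a))(r+\tau r)=0$, to remove the $I^G$ contributions.
\item Match the remaining three sums against the three summands of $N(r)$.
\end{enumerate}
The same computation with the roles of $a_i,b_i$ exchanged handles $r^G_+$.

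\emph{Conclusion.} Combining the two parts, $C(r)=N(r)=0$ holds if and only if both maps preserve both operations. The final claim, that $(A^*,\cdot_r,[\cdot,\cdot]_r)$ is a GDA, then follows without further computation from Corollary \ref{rmk-m2}: the GDBA $(A,\cdot,[\cdot,\cdot],\alpha_r,\beta_r)$ exists, and its dual operations computed from Eqs.~\eqref{nr} and \eqref{lr} are exactly $\cdot_r$ and $[\cdot,\cdot]_r$. This is the analogue of Proposition \ref{prop-1}, obtained by a one-line pairing $\langle x\cdot_r y,a\rangle=\langle x\otimes y,\alpha_r(a)\rangle$. Alternatively, one can pass to $R(A)$ via Proposition \ref{lbgb} and use Theorem \ref{qtl}, extracting the coefficients of $\lambda$ and $\partial$.

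\emph{Main obstacle.} The hardest step is the Novikov identity. The operation $\cdot_r$ is asymmetric, mixing $L^*+R^*$ with $R^*$, so one must check that invariance makes the $I^G$-terms cancel rather than merely symmetrize. If this fails directly, my fallback is to deduce it from the conformal statement. Under Proposition \ref{lbgb}, $r$ viewed in $R(A)\otimes R(A)$ has $r_\pm$ extending $r^G_\pm$ ${\bf k}[\partial]$-linearly. The CCYBE for $r$ separates, by degree in $\partial$, into $N(r)=0$ and $C(r)=0$, as in \cite{HBG}. A Lie conformal homomorphism $R(A^*)\to R(A)$ that maps $A^*$ into $A$ is exactly a GDA homomorphism, because comparing the $\partial$, $\lambda$ and constant coefficients of Eq.~\eqref{eq-lc-gd} isolates the Novikov product and the Lie bracket separately.
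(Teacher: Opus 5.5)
Your plan is essentially the paper's proof: pair the homomorphism defects of $r^G_\pm$ against $C(r)$ and $N(r)$, use the ad- and $(L,R)$-invariance of $r+\tau r$ to pass between $r^G_+$ and $r^G_-$, and get the GDA structure on $A^*$ from Corollary~\ref{rmk-m2} via $\langle x\cdot_r y,a\rangle=\langle x\otimes y,\alpha_r(a)\rangle$ and $\langle [x,y]_r,a\rangle=\langle x\otimes y,\beta_r(a)\rangle$.

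Your Novikov step needs reordering. The $r^G_+$-defect already satisfies $\langle z, r^G_+(x\cdot_r y)-r^G_+(x)\cdot r^G_+(y)\rangle=-\langle x\otimes y\otimes z,N(r)\rangle$ with no invariance at all. Invariance, in the form $I_G\circ(L^*+R^*)(a)=L(a)\circ I_G$ and $I_G\circ R^*(a)=-R(a)\circ I_G$, is needed only to show $I_G(x\cdot_r y)=r^G_+(x)\cdot r^G_+(y)-r^G_-(x)\cdot r^G_-(y)$, so the two defects coincide. By contrast, substituting $r^G_+=r^G_-+I_G$ inside $\cdot_r$ does not make the $I_G$-terms vanish, as your step 3 claims.
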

\begin{proof}
Let $x$, $y\in A^\ast$ and $a\in A$.   Then it is straightforward
to show that $\langle [x,y]_r, a\rangle =\langle x\otimes y,
\beta_r(a)\rangle$ and $\langle x\cdot_r y, a\rangle =\langle
x\otimes y, \alpha_r(a)\rangle$, where $\alpha_r$ and $\beta_r$
are defined by Eqs. (\ref{nr}) and (\ref{lr}) respectively. By
Remark \ref{rmk-m2}, if $r$ is a solution of the GDYBE in
$(A,\cdot,[\cdot,\cdot])$, then $(A, \cdot, [\cdot,\cdot],
\alpha_r, \beta_r)$ is a GDBA. Therefore,
$(A^{*},\cdot_r,[\cdot,\cdot]_r)$ is a GDA.
    By a straightforward calculation, we have
    \begin{align*}
        &\langle z,r^G_+(x\cdot_r y)-r^G_+(x)\cdot r^G_+(y)\rangle=-\langle x\otimes y\otimes z,N(r)\rangle,\\
        &\langle z,r^G_+([x,y]_r)-[r^G_+(x),r^G_+(y)]\rangle=-\langle x\otimes y\otimes
        z,C(r)\rangle.
    \end{align*}
Hence $r^G_+$ is a homomorphism of GDAs from $(A^*, \cdot_r,
[\cdot,\cdot]_r)$ to $(A, \cdot, [\cdot,\cdot])$ if and only if $r$ is a
solution of the GDYBE in $(A,\cdot,[\cdot,\cdot])$.
By   a straightforward calculation,  $r^G_+$ is a GDA homomorphism if and only if
$r^G_-$ is a GDA homomorphism under the condition that the symmetric part of $r$ is  {\rm ad}-invariant and $(L,R)$-invariant.
\delete{Similarly,
$r^G_-$ is a homomorphism of GDAs from $(A^*, \alpha_r, \beta_r)$
to $(A, \cdot, [\cdot,\cdot])$ if and only if $r$ is a solution of
the GDYBE in $(A,\cdot,[\cdot,\cdot])$.} Thus this conclusion
holds.
\end{proof}

\begin{definition}
    A quasi-triangular GDBA $(A,\cdot,[\cdot,\cdot],\alpha_r,\beta_r)$ is called {\bf factorizable} if the linear map $I_G=r^G_+-r^G_-:(A^*, \cdot_r, [\cdot,\cdot]_r)\rightarrow (A, \cdot, [\cdot,\cdot])$ is an isomorphism of GDAs.
\end{definition}

\begin{theorem}\label{th-flgd}
    Let $(R(A)={\bf k}[\partial]A,[\cdot_\lambda\cdot])$ be the Lie conformal algebra corresponding to a GDA $(A,\cdot,[\cdot,\cdot])$.
    Let $r\in A\otimes A$, which can also be naturally regarded as an element in $ R(A)\otimes  R(A)$.
    Suppose that $\alpha_r$ and $\beta_r$ defined by Eqs. \eqref{nr} and \eqref{lr} respectively are ${\bf k}[\partial]$-module homomorphisms from $ R(A)$ to $R(A)\otimes  R(A)$ such that $ \alpha_r(A),\beta_r(A)\subset A\otimes A$. Then $( R(A),[\cdot_\lambda\cdot],\Delta_r)$ is a quasi-triangular Lie conformal bialgebra if and only if $(A,\cdot,[\cdot,\cdot],\alpha_r,\beta_r)$ is a quasi-triangular GDBA. Moreover, $( R(A),[\cdot_\lambda\cdot],\Delta_r)$ is a factorizable  Lie conformal bialgebra  if and only if $(A,\cdot,[\cdot,\cdot],\alpha_r,\beta_r)$ is a factorizable GDBA.
\end{theorem}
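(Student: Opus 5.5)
We first compare the coproducts. For $r=\sum_i a_i\otimes b_i\in A\otimes A$ and $a\in A$, expand $\Delta_r(a)=a_{-\partial^{\otimes^2}}r$ using Eq. \eqref{eq-lc-gd}. Since $[a_\lambda a_i]=\partial(a_i\cdot a)+\lambda(a\cdot a_i+a_i\cdot a)+[a,a_i]$ with $\lambda=-\partial^{\otimes^2}$, the terms $\lambda(\cdots)$ become $-(\partial\otimes\id+\id\otimes\partial)$ applied to the corresponding tensors. Collecting terms, we aim to obtain
\[
\Delta_r(a)=(\partial\otimes\id)\alpha_r(a)-\tau(\partial\otimes\id)\alpha_r(a)+\beta_r(a),
\]
which is exactly Eq. \eqref{eq-lcc-gdc}, possibly only modulo the identification of $r$ with $\tau r$ parts. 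We then check that the symmetric part of $r$ is $R(A)$-invariant if and only if it is both ${\rm ad}$-invariant and $(L,R)$-invariant. To do this, we separate $a_{-\partial^{\otimes^2}}(r+\tau r)$ by degree in $\partial$: the $\partial$-free component is $({\rm ad}(a)\otimes\id+\id\otimes{\rm ad}(a))(r+\tau r)$, and the degree-one components give $(L(a)\otimes\id+\id\otimes(L+R)(a))(r+\tau r)$ together with its flip. Since $R(A)$ is free on $A$, each coefficient must vanish separately.

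Next we treat the CCYBE. We expand $[\![r,r]\!]$ in $R(A)^{\otimes 3}$ and reduce modulo $\partial\cdot(R(A)^{\otimes 3})$ by using the substitution $\partial\otimes\id\otimes\id\equiv-\id\otimes\partial\otimes\id-\id\otimes\id\otimes\partial$. This puts the expression in a normal form, namely a polynomial in $\id\otimes\partial\otimes\id$ and $\id\otimes\id\otimes\partial$ with coefficients in $A^{\otimes 3}$; such representatives are unique. We expect the degree-zero coefficient to be $C(r)$ and the degree-one coefficients to be $N(r)$ and a permuted copy of it, up to sign. Hence $[\![r,r]\!]\equiv 0$ if and only if $C(r)=N(r)=0$. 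Combined with the invariance equivalence, this gives the first claim. Alternatively, for the case where the symmetric part is invariant, we can use Proposition \ref{lbgb} with Theorem \ref{qtl} and the GDA-homomorphism characterization. This normal-form bookkeeping is the main obstacle: tracking which tensor slot receives each $\partial$ after the substitution is where errors occur. As a cross-check, I would first verify the computation in the skew-symmetric case.

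For factorizability, we identify $R(A)^{*c}$ with $R(A^*)$: a basis $\{e_i\}$ of $A$ is a ${\bf k}[\partial]$-basis of $R(A)$, and its dual ${\bf k}[\partial]$-basis corresponds to the dual basis of $A^*$. Since $r$ has coefficients in $A\otimes A$ with no $\partial$, Eq. \eqref{Eq-r} shows that $r_\pm$ is the ${\bf k}[\partial]$-linear extension of $r^G_\pm$. Therefore $I$ is the extension of $I_G$, and $I$ is bijective if and only if $I_G$ is. Next, the bracket $[\cdot_\lambda\cdot]^r$ on $R(A^*)$ is dual to $\Delta_r$, and by the first step and Proposition \ref{lbgb} it is the Lie conformal algebra corresponding to the GDA $(A^*,\cdot_r,[\cdot,\cdot]_r)$. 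Since the extension of a linear map between GDAs is a Lie conformal algebra homomorphism exactly when the map is a GDA homomorphism (compare Eq. \eqref{eq-lc-gd} coefficientwise in $\partial$ and $\lambda$), $I$ is a Lie conformal algebra isomorphism if and only if $I_G$ is a GDA isomorphism. This proves the factorizable equivalence.
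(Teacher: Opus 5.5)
Your treatment of the invariance condition (splitting $a_{-\partial^{\otimes^2}}(r+\tau r)$ by $\partial$-degree) and of factorizability (viewing $r_\pm$ as the $\mathbf{k}[\partial]$-linear extensions of $r^G_\pm$ and comparing coefficients in Eq.~\eqref{eq-lc-gd}) matches the paper, which takes the CCYBE/GDYBE step from [HBG, Proposition 3.18] without computing it. The genuine problem is your normal-form claim for the CCYBE, which is false as stated. Substituting $\partial\otimes\id\otimes\id\equiv-\id\otimes\partial\otimes\id-\id\otimes\id\otimes\partial$ gives
\[
[\![r,r]\!]\equiv C(r)+(\id\otimes\partial\otimes\id)P(r)-(\id\otimes\id\otimes\partial)N(r),
\]
where
\[
P(r)=\sum_{i,j}\big(a_i\cdot a_j\otimes b_i\otimes b_j-a_i\otimes b_i\cdot a_j\otimes b_j-a_i\otimes a_j\otimes(b_i\cdot b_j+b_j\cdot b_i)\big).
\]
In general $P(r)$ is not a permutation of $N(r)$. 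One checks $\langle x\otimes y\otimes z,P(r)\rangle=\langle x,r^G_-(y)\cdot r^G_-(z)-r^G_-(y\cdot_r z)\rangle$, so $P(r)$ measures multiplicativity of $r^G_-$, whereas $N(r)$ measures that of $r^G_+$. For a concrete failure, let $A$ have basis $a,b,c$ with $b\cdot a=c=-a\cdot b$, all other products zero, and trivial Lie bracket. All triple products vanish, so this is Novikov. Take $r=a\otimes b$. Then $C(r)=N(r)=0$, but $[\![r,r]\!]=-(\id\otimes\partial\otimes\id)(a\otimes c\otimes b)\notin\partial\cdot(R(A)\otimes R(A)\otimes R(A))$. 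Here $r+\tau r$ is not $(L,R)$-invariant. So you cannot first prove ``CCYBE $\Leftrightarrow$ $C(r)=N(r)=0$'' and then combine it with the invariance equivalence.

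The missing idea is to use the $(L,R)$-invariance of $r+\tau r$ inside the CCYBE step. Dualized, it says $I_G\circ(L^*+R^*)(a)=L(a)\circ I_G$ and $I_G\circ R^*(a)=-R(a)\circ I_G$. Together with $x\cdot_r y=(L^*+R^*)(r^G_+(x))y-R^*(r^G_-(y))x$, this gives $I_G(x\cdot_r y)=r^G_+(x)\cdot I_G(y)+I_G(x)\cdot r^G_-(y)$, and hence
\[
r^G_+(x\cdot_r y)-r^G_+(x)\cdot r^G_+(y)=r^G_-(x\cdot_r y)-r^G_-(x)\cdot r^G_-(y).
\]
This says exactly that $N(r)$ is the image of $P(r)$ under $u\otimes v\otimes w\mapsto v\otimes w\otimes u$. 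Only then does your normal form give CCYBE $\Leftrightarrow$ GDYBE. That is enough, because both notions of quasi-triangularity include the (equivalent) invariance conditions.

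The same caveat makes the hedge in your first paragraph precise. Exactly, $\Delta_r(a)$ equals the right-hand side of Eq.~\eqref{eq-lcc-gdc} (with $\alpha=\alpha_r$, $\beta=\beta_r$) minus $(\id\otimes\partial)\big((L+R)(a)\otimes\id+\id\otimes L(a)\big)(r+\tau r)$. So identifying $[\cdot_\lambda\cdot]^r$ with the bracket on $R(A^*)$ built from $(A^*,\cdot_r,[\cdot,\cdot]_r)$ is legitimate only because invariance holds there. This applies both in your last paragraph and in your fallback through Theorem~\ref{qtl}. Your fallback, completed by the $r^G_+$/$r^G_-$ comparison above, is the correct way to organize the CCYBE step. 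With invariance made explicit at these points, the argument goes through.
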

\begin{proof}
    By \cite[Proposition 3.18]{HBG},  $r\in A\otimes A$  is a solution of the CCYBE in $( R(A),[\cdot_\lambda\cdot])$ if and only if $r$ is a solution of the GDYBE in $(A,\cdot,[\cdot,\cdot])$.
 
    Let $r=\sum_i a_i\otimes b_i$. For all $a\in A$ and $x,y\in A^*$, we have
    \begin{align*}
        &\langle x\otimes y,a_{-\partial^{\otimes^2}} (r+\tau r)\rangle _{\lambda,\mu}\\
        =&\sum_i \langle x\otimes y, [a_{-\lambda-\mu}a_i]\otimes b_i+a_i\otimes [a_{-\lambda-\mu} b_i]+[a_{-\lambda-\mu}b_i]\otimes a_i+b_i\otimes [a_{-\lambda-\mu} a_i]\rangle_{\lambda,\mu}\\
        =&-\langle x\otimes y,(\lambda(L(a)\otimes \id+\id\otimes (L+R)(a))+\mu( (L+R)(a)\otimes \id+\id\otimes L(a))) (r+\tau r) \rangle\\
        &+\langle x\otimes y,({\rm ad}(a)\otimes \id+\id\otimes {\rm ad}(a))(r+\tau
        r)\rangle.
    \end{align*}
   Hence the symmetric part of $r$ is $ R(A)$-invariant if and only if the symmetric part of $r$ is {\rm ad}-invariant and $(L,R)$-invariant. Therefore, $( R(A),[\cdot_\lambda\cdot],\Delta_r)$ is a quasi-triangular Lie conformal bialgebra if and only if $(A,\cdot,[\cdot,\cdot],\alpha_r,\beta_r)$ is a quasi-triangular GDBA.

    Note that $r_+\mid_A =r^G_+$ and $r_-\mid_A=r^G_-$. Therefore, $r^G_+$, $r^G_-:A^*\rightarrow A$ are GDA homomorphisms if and only if $r_+,r_-: R(A)^{*c}\rightarrow  R(A)$ are Lie conformal algebra homomorphisms. Thus $I_G:A^*\rightarrow A$ is a GDA isomorphism if and only if $I : R(A)^{*c}\rightarrow  R(A)$ is a Lie conformal algebra isomorphism. Then this conclusion holds.
\end{proof}
\begin{remark}
    By Theorem \ref{th-flgd}, if $(A, [\cdot,\cdot])$ is trivial, we can get a correspondence between a class of quasi-triangular (resp. factorizable) Lie conformal bialgebras  and quasi-triangular (resp. factorizable) Novikov bialgebras.  
\end{remark}


\subsection{Factorizable Gel'fand-Dorfman bialgebras via quadratic  Rota-Baxter  Gel'fand-Dorfman algebras of nonzero weight}

\begin{definition}
A {\bf    Rota-Baxter GDA}
$(A,\cdot,[\cdot,\cdot],\mathfrak{B})$ of weight  $\theta$ consists of  a  GDA
$(A,\cdot,[\cdot,\cdot])$ and a linear map $\B: A\rightarrow A$
such that
\begin{equation}\label{lrb}
    \begin{aligned}
        &\mathfrak{B}(a)\cdot\mathfrak{B}(b)=\mathfrak{B}( \mathfrak{B}(a)\cdot b+a\cdot \mathfrak{B}(b)+\theta a\cdot b ),\\
        &[\B(a),\B(b)]=\B( [\B(a),b]+[a,\B(b)]+\theta[a,b]),\qquad \forall a,b\in A.
    \end{aligned}
\end{equation}
For convenience, sometimes we also denote it by $(A,\mathfrak{B})$.
\end{definition}

Recall \cite{HBG} that a bilinear form $(\cdot,\cdot)$ on a GDA $(A,\cdot,[\cdot,\cdot])$ is called {\bf invariant} if
\begin{align}
&([a,b],c)=(a,[b,c]),\\
\label{pn-bf-1}& (a\cdot b,c)=-(b,a\cdot c+c\cdot a), \qquad \forall a,b,c\in A.
\end{align}
If a GDA $(A,\cdot,[\cdot,\cdot])$ equipped with a non-degenerate symmetric invariant bilinear form $(\cdot,\cdot)$,  then $(A,\cdot,[\cdot,\cdot],(\cdot,\cdot))$ is called  a { \bf quadratic GDA}.
In particular, if the Lie bracket in $(A,\cdot,[\cdot,\cdot],(\cdot,\cdot))$  is trivial, then $(A,\cdot,(\cdot,\cdot))$ is a {\bf quadratic Novikov algebra}. \delete{If Novikov product is  trivial, then $(A,[\cdot,\cdot],(\cdot,\cdot))$ is a {\bf quadratic Lie algebra}.}

\begin{definition}
Let $(A,\cdot,[\cdot,\cdot],\mathfrak{B})$ be a Rota-Baxter GDA of
weight  $\theta$ and $(A,\cdot,[\cdot,\cdot],(\cdot,\cdot))$ be a
quadratic GDA. Then $(A,\cdot,[\cdot,\cdot],\mathfrak{B}
,(\cdot,\cdot))$ is called a {\bf quadratic Rota-Baxter GDA of
weight  $\theta$} if  the following compatibility condition holds.
\begin{equation*}
    (\mathfrak{B}(a),b)+    (a, \mathfrak{B}(b))+\theta(a,b)=0,\qquad \forall a,b\in A.
\end{equation*}
In particular, if the Lie bracket in $(A,\cdot,[\cdot,\cdot],\mathfrak{B} ,(\cdot,\cdot))$ is trivial, then $(A,\cdot,\B,(\cdot,\cdot))$ is a {\bf quadratic   Novikov algebra of weight  $\theta$}.
\end{definition}

\begin{proposition}\label{pp-qrblca-gda}
Let $(R(A), [\cdot_\lambda \cdot])$ be the Lie conformal algebra
corresponding to a  GDA $(A$, $\cdot$, $[\cdot,\cdot])$, and $\B$
be a ${\bf k}[\partial]$-module homomorphism from $R(A)$ to $R(A)$
such that $\B(A)\subset A$. Then $(R(A), [\cdot_\lambda
\cdot],\B)$ is a   Rota-Baxter Lie conformal
algebra  of weight  $\theta$ if and only if $(A, \cdot, [\cdot,\cdot], $ $\B|_A )$ is a
Rota-Baxter GDA of weight  $\theta$. $(R(A), [\cdot_\lambda \cdot],
\B)$  is called {\bf the   Rota-Baxter Lie
conformal algebra of weight  $\theta$ corresponding to $(A, \cdot, [\cdot,\cdot],
\B|_A)$}.

In particular, if there is a conformal bilinear form
$\omega_\lambda(\cdot,\cdot)$ defined by
$$
\omega_\lambda(a,b)=(a,b),\qquad \forall a,b\in A,
$$
then $(R(A), [\cdot_\lambda \cdot],$
$\B,\omega_{\lambda}(\cdot,\cdot))$ is a quadratic
Rota-Baxter Lie conformal algebra of weight  $\theta$ if and only if
$(A, \cdot, [\cdot,\cdot], $ $\B|_A,(\cdot,\cdot) )$ is a
quadratic Rota-Baxter GDA of weight  $\theta$. $(R(A),
[\cdot_\lambda \cdot], \B,\omega_{\lambda}(\cdot,\cdot))$  is
called {\bf the quadratic   Rota-Baxter Lie
conformal algebra  of weight  $\theta$ corresponding to $(A, \cdot, [\cdot,\cdot],
\B|_A,(\cdot,\cdot))$}.
\end{proposition}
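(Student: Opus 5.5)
The plan is to reduce everything to the generators $A$, using conformal sesquilinearity together with $\B(A)\subset A$. Since $\B$ is a ${\bf k}[\partial]$-module homomorphism, it is determined by $\B|_A$, and $\B(\partial^m a)=\partial^m\B(a)$. Both sides of the Rota-Baxter identity are conformal sesquilinear in $(a,b)$:
\[
[\B(\partial a)_\lambda\B(b)]=-\lambda[\B(a)_\lambda \B(b)],
\]
and similarly on the right-hand side and in the second argument. Hence the identity holds on $R(A)$ if and only if it holds for all $a,b\in A$.

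For $a,b\in A$, expand both sides with Eq.~\eqref{eq-lc-gd}. The left side is
\[
[\B(a)_\lambda \B(b)]=\partial(\B(b)\cdot\B(a))+\lambda(\B(a)\cdot\B(b)+\B(b)\cdot\B(a))+[\B(a),\B(b)].
\]
Since $\B$ commutes with $\partial$, the right side equals $\partial\,\B(X(b,a))+\lambda\,\B(X(a,b)+X(b,a))+\B(Y(a,b))$. Here
\[
X(a,b)=\B(a)\cdot b+a\cdot\B(b)+\theta a\cdot b,\qquad Y(a,b)=[\B(a),b]+[a,\B(b)]+\theta[a,b].
\]
Because $R(A)={\bf k}[\partial]\otimes A$ is free, we compare the coefficients of $\partial$, $\lambda$ and $1$. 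The $\partial$-coefficient gives $\B(b)\cdot\B(a)=\B(X(b,a))$ for all $a,b$, which is exactly the Novikov Rota-Baxter identity. It also makes the $\lambda$-coefficient automatic. The constant term gives the Lie Rota-Baxter identity. Conversely, the two identities in Eq.~\eqref{lrb} give all three coefficient equations, which proves the first claim.

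For the quadratic part, first note that $\omega_\lambda(a,b)=(a,b)$ on $A$ extends uniquely to a conformal bilinear form on $R(A)$ by sesquilinearity, $\omega_\lambda(\partial^m a,\partial^n b)=(-\lambda)^m\lambda^n(a,b)$. Symmetry of $\omega_\lambda$ is equivalent to symmetry of $(\cdot,\cdot)$, since $\omega_{-\lambda}(b,a)$ computed on generators is $(b,a)$. Non-degeneracy also transfers: the induced map $R(A)\to R(A)^{*c}$ sends the basis of $A$ to the dual-basis combination given by the Gram matrix of $(\cdot,\cdot)$. That map is a ${\bf k}[\partial]$-isomorphism exactly when the Gram matrix is invertible over ${\bf k}$, because it is constant in $\partial$. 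The compatibility condition \eqref{brblca} is sesquilinear and, on generators, reads $(a,\B(b))+(\B(a),b)+\theta(a,b)=0$, since $\B(A)\subset A$. So it is equivalent to the GDA compatibility.

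The main obstacle is invariance. Using $\partial$-linearity, it suffices to check $\omega_\lambda([a_\mu b],c)=\omega_\mu(a,[b_{\lambda-\mu}c])$ for $a,b,c\in A$. By sesquilinearity, the left side equals
\[
-\lambda(b\cdot a,c)+\mu(a\cdot b+b\cdot a,c)+([a,b],c),
\]
and the right side equals
\[
\mu(a,c\cdot b)+(\lambda-\mu)(a,b\cdot c+c\cdot b)+(a,[b,c]).
\]
Comparing the constant terms gives Lie invariance. The coefficients of $\lambda$ and $\mu$ give two scalar identities; using symmetry they are equivalent to \eqref{pn-bf-1}. This is the invariance equivalence for quadratic GDAs already implicit in \cite{HBG}, and I would either cite it there or verify the two coefficient identities directly, permuting variables as needed. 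Together with the first part, this gives the equivalence for quadratic Rota-Baxter structures.
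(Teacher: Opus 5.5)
Your proposal is correct and follows the paper's route: the paper only says ``It is straightforward,'' and your argument is that direct verification, namely reducing to generators in $A$ via sesquilinearity (using $\B(A)\subset A$), expanding with Eq.~\eqref{eq-lc-gd}, and comparing coefficients of $\partial$, $\lambda$, $\mu$ and the constant term in the free module ${\bf k}[\partial]\otimes A$. The one step you left as a plan also checks out: the $\lambda$-coefficient of the invariance identity is exactly \eqref{pn-bf-1}, and given that, the $\mu$-coefficient $(a\cdot b+b\cdot a,c)=-(a,b\cdot c)$ reduces to $(a\cdot b,c)=(a,c\cdot b)$, which follows from two applications of \eqref{pn-bf-1} together with symmetry.
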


\begin{proof}
It is straightforward.
\end{proof}

\begin{theorem}\label{th-rbgd-fgd}
Let $(A,\cdot,[\cdot,\cdot],\alpha_r,\beta_r)$ be a factorizable GDBA with $I_G=r^G_+-r^G_-$ and $\theta\in {\bf k}$. Then $(A,\cdot,[\cdot,\cdot],\B,(\cdot,\cdot)^{I_G})$ is a quadratic Rota-Baxter GDA of weight
 $\theta$, where the linear map $\B:A\rightarrow A$ and the bilinear form $(\cdot,\cdot)^I$ are respectively defined  by
$$
\B=\theta r_-^G\circ I_{G}^{-1},\qquad (a,b)^{I_G}=\langle I_G^{-1}a, b\rangle, \qquad \forall a,b\in A.
$$
Conversely, Let $(A,\cdot,[\cdot,\cdot],\B,(\cdot,\cdot))$ be a quadratic Rota-Baxter GDA of weight  $\theta$ with $\theta\neq 0\in {\bf k}$.
Let $I_G: A^*\rightarrow A$ be the induced vector space isomorphism given by $(a,b)=\langle I_G^{-1} a,b\rangle $ for all $a,b \in \A$.
Then $r=\sum_{j=1}^n \frac{1}{\theta}e_j\otimes (\B+\theta\id)I_G(e_j^*)\in A\otimes A$ is a solution of the GDYBE in $(A,\cdot,[\cdot,\cdot])$, where $\{ e_j\}_{j=1}^n$ is a basis of $A$ and $\{e_j^*\}_{j=1}^n$ is the dual basis of $A^*$. Thus, there is a factorizable GDBA $(A,\cdot,[\cdot,\cdot],\alpha_r,\beta_r)$.
 
\end{theorem}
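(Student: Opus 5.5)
The plan is to deduce both directions from the conformal results of Section \ref{3} by passing through the Lie conformal algebra $R(A)={\bf k}[\partial]A$, using Theorem \ref{th-flgd} and Proposition \ref{pp-qrblca-gda} as the dictionary. If the transfer becomes awkward, one can instead repeat the computations of Theorems \ref{flrbl} and \ref{rbflca} directly in the GDA, with $\langle\cdot,\cdot\rangle$ in place of $\langle\cdot,\cdot\rangle_\lambda$.

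\emph{Direct part.} Let $(A,\cdot,[\cdot,\cdot],\alpha_r,\beta_r)$ be factorizable. The coproducts $\alpha_r,\beta_r$ in \eqref{nr}, \eqref{lr} extend ${\bf k}[\partial]$-linearly and preserve $A\otimes A$. So Theorem \ref{th-flgd} shows that $(R(A),[\cdot_\lambda\cdot],\Delta_r)$ is a factorizable Lie conformal bialgebra.

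Since $R(A)^{*c}$ has dual ${\bf k}[\partial]$-basis $\{e_i^*\}$, the maps $r_\pm$ and $I$ send $A^*$ into $A$, and $r_\pm|_{A^*}=r^G_\pm$. Hence $I^{-1}$ sends $A$ into $A^*$, with $I^{-1}|_A=I_G^{-1}$.

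Theorem \ref{flrbl} then gives a quadratic Rota-Baxter Lie conformal algebra $(R(A),[\cdot_\lambda\cdot],\theta r_-\circ I^{-1},\omega^I_\lambda)$. This operator restricts to $\B=\theta r^G_-\circ I_G^{-1}$ on $A$. For $a,b\in A$ we have $\omega^I_\lambda(a,b)=\langle I_G^{-1}a,b\rangle=(a,b)^{I_G}$, which is constant in $\lambda$. Proposition \ref{pp-qrblca-gda} now gives the claim.

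A cheap alternative is to redo the identity \eqref{eq-q1} verbatim with the GDA brackets $\cdot_r,[\cdot,\cdot]_r$:
\[
I_G(I_G^{-1}a\cdot_r I_G^{-1}b)=a\cdot b+r^G_-I_G^{-1}(a)\cdot b+a\cdot r^G_-I_G^{-1}(b),
\]
and likewise for the Lie bracket. One then checks the invariance and the compatibility condition using $I_G^*=I_G$ and $(r^G_-)^*=-r^G_+$.

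\emph{Converse.} Let $(A,\cdot,[\cdot,\cdot],\B,(\cdot,\cdot))$ be a quadratic Rota-Baxter GDA. Extend $\B$ ${\bf k}[\partial]$-linearly and set $\omega_\lambda(a,b)=(a,b)$. By Proposition \ref{pp-qrblca-gda}, this gives a quadratic Rota-Baxter Lie conformal algebra on $R(A)$, and the induced $I_\omega$ restricts to $I_G$.

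Theorem \ref{rbflca} then shows that $r=\sum_j\frac1\theta e_j\otimes(\B+\theta\id)I_G(e_j^*)$ solves the CCYBE in $R(A)$ and that $(R(A),\Delta_r)$ is factorizable. Here $\{e_j\}$ is simultaneously a basis of $A$ and a ${\bf k}[\partial]$-basis of $R(A)$. Because $r\in A\otimes A$, \cite[Proposition 3.18]{HBG} (as used in the proof of Theorem \ref{th-flgd}) shows that $r$ solves the GDYBE. Theorem \ref{th-flgd} then gives that $(A,\cdot,[\cdot,\cdot],\alpha_r,\beta_r)$ is a factorizable GDBA.

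\emph{Main obstacle.} The delicate point is the hypothesis of Theorem \ref{th-flgd}, namely that $\alpha_r$ and $\beta_r$ define ${\bf k}[\partial]$-module maps preserving $A\otimes A$. This holds because $r\in A\otimes A$ and the formulas \eqref{nr}, \eqref{lr} only use $L,R,{\rm ad}$, so extending ${\bf k}[\partial]$-linearly is harmless.

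We also need the identification $r_\pm|_{A^*}=r^G_\pm$ to persist under inversion, i.e. $I^{-1}(A)\subseteq A^*$. This follows because $I$ maps the ${\bf k}[\partial]$-span of $A^*$ bijectively, degree-preservingly, onto $R(A)$, and is the ${\bf k}[\partial]$-linear extension of $I_G$.
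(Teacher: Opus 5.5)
Your proposal is correct and follows the paper's own route. The paper likewise passes both directions through $R(A)$, using Theorem \ref{th-flgd} and Proposition \ref{pp-qrblca-gda} as the dictionary and Theorems \ref{flrbl} and \ref{rbflca} for the conformal statements; your checks that $r_\pm$, $I$ and $I^{-1}$ restrict to $r^G_\pm$, $I_G$ and $I_G^{-1}$ (because $r\in A\otimes A$) supply exactly the details the paper leaves as ``straightforward''.
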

\begin{proof}

It is straightforward by Theorems \ref{th-flgd}, \ref{flrbl},  \ref{rbflca} and Proposition  \ref{pp-qrblca-gda}.
\end{proof}

 Therefore, we have the following commutative diagram for
$\theta\neq 0$.
\begin{equation*}
\hspace*{-3cm}%
\scalebox{0.8}{
    {\small \xymatrix@C=2cm{
            & \txt{  quadratic    Rota-Baxter GDAs of weight  $\theta$\\  $(A,\cdot,[\cdot,\cdot],\mathfrak{B},(\cdot,\cdot)^{I_G})$} \ar[d]^{\text{Prop.}\ref{pp-qrblca-gda}} \ar@<1ex>[r]^{\quad\qquad \text{Thm.}\ref{th-rbgd-fgd}    }
            &\txt{ factorizable GDBAs\\$(A,\cdot,[\cdot,\cdot],\alpha_r,\beta_r)$} \ar[d]^{\text{Thm.}\ref{th-flgd}}      \ar[l] \\
            & \txt{quadratic     Rota-Baxter Lie conformal algebras\\of weight  $\theta$ \\$(R(A),[\cdot_\lambda\cdot],\B,\omega_\lambda(\cdot,\cdot))$} \ar[u]\ar@<1ex>[r]^{ \text{Thm.} \ref{rbflca}}
            & \txt{factorizable Lie conformal bialgebras\\ $(R(A),[\cdot_\lambda\cdot],\Delta_r)$}\ar[u] \ar[l]^{\text{Thm. }\ref{flrbl} } }               }}
            \end{equation*}

            \begin{definition}
            Let $ ( A,A^*)$ be a GDBA. A linear map $\B:A\rightarrow A$ is called a {\bf Rota-Baxter operator of weight $\theta$ on $ ( A,\cdot,[\cdot,\cdot],\alpha,\beta)$}, if $\B$ is a Rota-Baxter operator of weight $\theta$ on $(A,\cdot,[\cdot,\cdot])$ and $\Wh{\B^*}$ is a Rota-Baxter operator of weight $\theta$ on $(A^*,\cdot^*,[\cdot,\cdot]^*)$.
            \end{definition}

                \begin{theorem}\label{th5}
                Let $ ( A,\cdot_A,[\cdot,\cdot]_A,\alpha,\beta)$ be a  GDBA and $\B$ be a Rota-Baxter operator of weight $\theta$ on $ ( A,\cdot_A,[\cdot,\cdot]_A,\alpha,\beta)$ with   $\theta\neq
                0$. Let $(A^\ast, \cdot_{A^\ast}, [\cdot,\cdot]_{A^{\ast}})$ be the
                GDA structure on $A^*$, where $\cdot_{A^\ast}$ and  $[\cdot,\cdot]_{A^{\ast}}$ are
respectively given by
                 $$\langle x\cdot_{A^{\ast}} y, a\rangle =\langle x\otimes y,\alpha (a)\rangle,\;\;\langle [x,y]_{A^{\ast}} , a\rangle =\langle x\otimes y,\beta (a)\rangle,\;\;\forall x,~y\in A^\ast, a\in
                 A.$$
                   Let $\{e_i\}_{i=1}^n$ be a  basis of $A$ and $\{e_i^*\}_{i=1}^n$ be the dual basis of $A^{*}$. Then
                   $( A\Join A^{*}, \cdot^\Join,[\cdot,\cdot]^\Join,\alpha_r,\beta_r )$ is a factorizable GDBA, where $r$ is defined by Eq. \eqref{eq-r} and $\cdot^\Join,[\cdot,\cdot]^\Join$ are respectively defined by
                \begin{align*}
                    &(a,x)\cdot^\Join (b,y)=(a\cdot_A b+L^*_{A^*}(x)b+R^*_{A^*}(y)a, x\cdot_{A^*} y+L^*_{A}(a)y+R^*_{A}(b)x),\\
                    &[(a,x),(b,y) ]^\Join=([a,b]_A+ {\rm ad}_{A^*}(x)b-{\rm ad}_{A^*}(y)a, [x,y]_{A^*}+{\rm ad}_{A}(a)y-{\rm ad}_{A}(b)x  ),
                \end{align*}
                for all $a,b\in A$ and $ x,y\in A^*$. Moreover, there is a factorizable Lie conformal bialgebra $( R(A\Join A^*),[\cdot_\lambda\cdot],\Delta_r )$, where $[\cdot_\lambda\cdot]$
                and $\Delta_r$ are respectively defined by Eqs. \eqref{eq-lc-gd} and
                \eqref{eq-lcc-gdc}. 
            \end{theorem}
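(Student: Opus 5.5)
The plan is to follow the proof of Theorem \ref{th-rblca-flcba}, now in the GDA setting, and then transfer the result to the conformal setting through Theorem \ref{th-flgd}.

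First, the Manin triple. Since $(A,A^*)$ is a GDBA, the Manin-triple characterization of GDBAs in \cite{HBG} shows that $(A\Join A^*,\cdot^\Join,[\cdot,\cdot]^\Join)$ is a GDA. This is the GDA analogue of Proposition \ref{LCB-equ}. The natural form
$$((a,x),(b,y))=\langle x,b\rangle+\langle y,a\rangle$$
is nondegenerate and symmetric. It is invariant in the GDA sense: it satisfies $([u,v],w)=(u,[v,w])$ together with Eq. \eqref{pn-bf-1}. Hence $A\Join A^*$ is a quadratic GDA. I would get invariance without new computation, by passing to $R(A\Join A^*)$. That algebra is the Lie conformal algebra of the conformal Manin triple attached to the Lie conformal bialgebra $R(A)$ via Proposition \ref{lbgb}. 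Its invariant form restricts to the form above on $A\Join A^*$, so the quadratic GDA condition follows from the second part of Proposition \ref{pp-qrblca-gda}.

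Second, the Rota-Baxter operator. Set $\widetilde{\B}=\B\oplus\Wh{\B^*}$, where $\Wh{\B^*}=-\B^*-\theta\id$. By hypothesis, $\B$ is a Rota-Baxter operator of weight $\theta$ on $A$ and $\Wh{\B^*}$ is one on $A^*$. To show $\widetilde{\B}$ is Rota-Baxter on $A\Join A^*$, I would prove a GDA version of Proposition \ref{ppbtb}. The cross terms reduce to the conditions \eqref{mrb1} and \eqref{mrb2} for the actions $L^*,R^*,{\rm ad}^*$ in each direction, both for the Novikov product and for the Lie bracket.

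I expect verifying these cross conditions to be the main obstacle. I would handle them by dualization, in the same way as Lemma \ref{m8} and Proposition \ref{pp-rbdr}. The condition for $A$ acting on $A^*$ by $(L^*,R^*,{\rm ad}^*)$ with operator $-\theta\id-\B^*$ is the dual of the identity \eqref{eqtb} for $\beta=\B$. This is the coadjoint-representation statement, and it holds automatically because $\B$ is Rota-Baxter on $A$. Symmetrically, the condition for $A^*$ acting on $A$ with operator $\B=-\theta\id-(\Wh{\B^*})^*$ holds because $\Wh{\B^*}$ is Rota-Baxter on $A^*$. The mixed terms involving both products need the combined form of the semi-direct product. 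I would avoid checking them by hand by lifting to conformal algebras. On $R(A\Join A^*)=R(A)\Join R(A)^{*c}$, extend $\widetilde{\B}$ $\bfk[\partial]$-linearly. The GDA Rota-Baxter conditions for $\B$ and $\Wh{\B^*}$ are equivalent to the conformal ones by Proposition \ref{pp-qrblca-gda}. Proposition \ref{pp-rbdr} together with Proposition \ref{ppbtb} then gives that $\widetilde{\B}$ is Rota-Baxter on $R(A\Join A^*)$, and Proposition \ref{pp-qrblca-gda} brings this back to the GDA level.

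Third, compatibility with the form. The identity $(\widetilde{\B}u,v)+(u,\widetilde{\B}v)+\theta(u,v)=0$ is the same one-line computation as in Theorem \ref{th-rblca-flcba}. So $(A\Join A^*,\widetilde{\B},(\cdot,\cdot))$ is a quadratic Rota-Baxter GDA of weight $\theta\neq0$.

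Now apply the converse part of Theorem \ref{th-rbgd-fgd}. Here $I_G$ is induced by the form and, in the basis $\{e_i,e_i^*\}$, is the identity as in Example \ref{excdd}. The formula $r=\sum_j\frac1\theta e_j\otimes(\widetilde{\B}+\theta\id)I_G(e_j^*)$ then reduces to Eq. \eqref{eq-r}, using $\widetilde{\B}+\theta\id=(\B+\theta\id)\oplus(-\B^*)$ and regrouping the sum over the dual bases. This gives the factorizable GDBA $(A\Join A^*,\alpha_r,\beta_r)$. Finally, Theorem \ref{th-flgd} turns it into the factorizable Lie conformal bialgebra $(R(A\Join A^*),[\cdot_\lambda\cdot],\Delta_r)$. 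The hypotheses of Theorem \ref{th-flgd} hold because $r\in A\otimes A$, so $\alpha_r$ and $\beta_r$ map $A$ into $A\otimes A$.
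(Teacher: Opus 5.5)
Your proposal is correct and follows essentially the same route as the paper. The paper's proof just cites \cite[Corollary 4.10]{HBG} for the double $A\Join A^*$ and then applies Theorems \ref{th-rbgd-fgd} and \ref{th-flgd}; you fill in the step it leaves implicit, namely that $(A\Join A^*,\B\oplus\Wh{\B^*},(\cdot,\cdot))$ is a quadratic Rota--Baxter GDA of weight $\theta$, mirroring the proof of Theorem \ref{th-rblca-flcba}. Your conformal lift via Propositions \ref{pp-rbdr}, \ref{ppbtb} and \ref{pp-qrblca-gda}, which uses the identification $R(A\Join A^*)\cong R(A)\Join R(A)^{*c}$, is sound, and so is your reduction of $r$ to Eq.~\eqref{eq-r} via $\sum_i e_i\otimes\B^*(e_i^*)=\sum_i\B(e_i)\otimes e_i^*$.
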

            \begin{proof}

            It follows straightforwardly from \cite[Corollary 4.10]{HBG}, Theorems \ref{th-rbgd-fgd} and \ref{th-flgd}.
            \end{proof}
                \begin{corollary}\label{cor-rbgda-f}
                Let  $(A,\cdot,[\cdot,\cdot],\B)$ be a Rota-Baxter GDA of weight  $\theta$ with $\theta\neq 0$. Let $\{e_i\}_{i=1}^n$ be a  basis of  $A$ and $\{e_i^*\}_{i=1}^n$ be the dual basis
                of $A^{*}$. Then $(A\ltimes_{(L^*+R^*,-R^*),{\rm ad}^*}A^*, \bullet,[\cdot,\cdot],\alpha_r,\beta_r)$ is a factorizable GDBA where $r$ is defined by Eq.
                \eqref{eq-r}. Moreover, there is a factorizable Lie conformal bialgebra $( R(A\ltimes_{(L^*+R^*,-R^*),{\rm ad}^*}A^*),[\cdot_\lambda\cdot],\Delta_r )$, where $[\cdot_\lambda\cdot]$
                and $\Delta_r$ are defined by Eqs. \eqref{eq-lc-gd} and
                \eqref{eq-lcc-gdc}.
            \end{corollary}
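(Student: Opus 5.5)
The plan is to reduce Corollary \ref{cor-rbgda-f} to Theorem \ref{th5} by specializing to the GDBA with zero cobracket. This mirrors how Corollary \ref{cor-qrblca-r} was obtained from Theorem \ref{th-rblca-flcba}.

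First, take the trivial GDBA $(A,\cdot,[\cdot,\cdot],\alpha=0,\beta=0)$. The Novikov bialgebra axioms, the Lie bialgebra axioms and the two GDBA compatibility conditions hold trivially when $\alpha=\beta=0$, since every term contains $\alpha$ or $\beta$. The induced GDA structure on $A^*$ is then the zero one: $\cdot_{A^*}=0$ and $[\cdot,\cdot]_{A^*}=0$.

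Next, check that $\B$ is a Rota-Baxter operator of weight $\theta$ on this GDBA. By assumption $\B$ is a Rota-Baxter operator of weight $\theta$ on $(A,\cdot,[\cdot,\cdot])$. The second requirement is that $\Wh{\B^*}=-\B^*-\theta\,\id$ be a Rota-Baxter operator of weight $\theta$ on $(A^*,\cdot^*,[\cdot,\cdot]^*)$. Since both operations on $A^*$ are zero, both sides of Eq. \eqref{lrb} vanish, so this condition is automatic.

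Then identify the double $A\Join A^*$. Setting $\cdot_{A^*}=0$ and $[\cdot,\cdot]_{A^*}=0$ kills the terms $L^*_{A^*}(x)b$, $R^*_{A^*}(y)a$ and ${\rm ad}_{A^*}(x)b$. The operations $\cdot^\Join$ and $[\cdot,\cdot]^\Join$ in Theorem \ref{th5} therefore reduce to the semi-direct product structure $A\ltimes V$ for a representation on $V=A^*$. The one point needing care is matching the coadjoint-type maps to the notation $(L^*+R^*,-R^*),{\rm ad}^*$. Here one uses the sign convention for dual maps under which the Novikov dual representation of $(A,L,R)$ is $(L^*+R^*,-R^*)$, and this matches the $L^*_A,R^*_A$ appearing in Theorem \ref{th5} as defined in \cite{HBG}. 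I expect this bookkeeping of conventions to be the only mild obstacle; the mathematics is immediate.

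Finally, apply Theorem \ref{th5} with $\theta\neq0$. It gives directly that $(A\ltimes_{(L^*+R^*,-R^*),{\rm ad}^*}A^*,\bullet,[\cdot,\cdot],\alpha_r,\beta_r)$ is a factorizable GDBA with $r$ given by Eq. \eqref{eq-r}. It also gives the factorizable Lie conformal bialgebra $(R(A\ltimes_{(L^*+R^*,-R^*),{\rm ad}^*}A^*),[\cdot_\lambda\cdot],\Delta_r)$ via Eqs. \eqref{eq-lc-gd} and \eqref{eq-lcc-gdc}. The latter is equivalently obtained from Theorem \ref{th-flgd}.
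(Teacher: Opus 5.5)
Your proposal is correct and follows the paper's route. The paper simply states that the corollary follows from Theorem \ref{th5}. You supply the details: specialize to the GDBA with $\alpha=\beta=0$. The dual GDA on $A^*$ is then trivial, so $\Wh{\B^*}$ is automatically a Rota-Baxter operator. The double $A\Join A^*$ then collapses to the semi-direct product $A\ltimes_{(L^*+R^*,-R^*),{\rm ad}^*}A^*$, and your sign convention for the coadjoint maps agrees with the paper's example.
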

            \begin{proof}
                It follows straightforwardly from Theorem \ref{th5}.
 \end{proof}

         We present an example to illustrate a GDBA with a
Rota-Baxter operator of nonzero weight gives a factorizable Lie
conformal bialgebra.

            \begin{example}
                Let $(A={\bf k}e_1\oplus{\bf k}e_2, \cdot,[\cdot,\cdot])$ be the 2-dimensional GDA whose nonzero products are given by
                \begin{align*}
                    e_1\cdot e_1=e_1,\qquad e_2\cdot e_1=e_2,\qquad [e_1,e_2]=-a e_2,
                \end{align*}
                where $a\in {\bf k}\backslash \{0\}$.  Let $\{e_1^*,e_2^*\}$ be the basis of $A^*$ dual to $\{e_1,e_2\}$. Then the nonzero products of  the GDA $(A\ltimes_{(L^*+R^*,-R^*),{\rm ad}^*}A^*, \bullet,[\cdot,\cdot])$ are given by
{\small                \begin{align*}
                    &e_1\bullet e_1=e_1,\qquad e_2\bullet e_1=e_2,\qquad e_1\bullet e_1^*=-2e_1^*,\qquad  e_1\bullet e_2^*=-e_2^*,\\
                    &e_1^*\bullet e_1=e_1^*,\qquad e_2^*\bullet e_1= e_2^*,\qquad e_2\bullet e_2^*=-e_1^*,\qquad [e_1,e_2]=-a e_2, \\
                    &[e_1,e_2^*]=ae_2^*,\qquad [e_2,e_2^*]=-ae_1^*.
                \end{align*}}
                It is straightforward to show that $\B$ is a Rota-Baxter operator of weight
                $1$ on $(A, \cdot,[\cdot,\cdot])$, where $\B(e_1)=-e_1+be_2$ and $\B(e_2)=0$, where $b\in {\bf k}$.
                Then by Corollary \ref{cor-rbgda-f},
                $(A\ltimes_{(L^*+R^*,-R^*),{\rm ad}^*}A^*, \bullet,[\cdot,\cdot],\alpha_r,\beta_r)$ is a factorizable GDBA where  $r=e_1\otimes e_1^*-b e_2\otimes e_1^*+b e_1^*\otimes e_2+e_2^*\otimes e_2$. Thus by Theorem \ref{th-flgd}, $(R(A\ltimes_{(L^*+R^*,-R^*),{\rm ad}^*}A^*),[\cdot_\lambda \cdot],\Delta_r )$ is a factorizable Lie conformal bialgebra where its nonzero $\lambda$-brackets and $\Delta_r$ are given by
                {\small
                \begin{align*}
                    &[{e_1}_\lambda e_1]=(\partial+2\lambda )e_1,\qquad [{e_1}_\lambda e_2]=(\partial+\lambda)e_2-ae_1,\qquad [{e_1}_\lambda e_1^*]=(\partial-\lambda )e_1^*,\\
                    &[{e_1}_\lambda e_2^*]=(\partial+a)e_2^*,\qquad [{e_2}_\lambda e_2^*]=-(\lambda+a)e_1^*,\qquad     \Delta_r (e_1)=b (\partial^{\otimes^2}-a)( e_1^*\otimes e_2 - e_2\otimes e_1^*),\\
                    &\Delta_r(e_2)=(\partial^{\otimes^2}-a)(e_1^*\otimes e_2-e_2\otimes e_1^*),\qquad \Delta_r(e_1^*)=(\id\otimes\partial -\partial\otimes \id)e_1^*\otimes e_1^*,\\
                    &\Delta_r(e_2^*)=b(\partial\otimes \id-\id\otimes \partial )e_1^*\otimes e_1^*.
                \end{align*}}
                \delete{In addition, by Theorem \ref{thfgdrb}, $(\overline{A\ltimes_{(L^*+R^*,-R^*),ad^*}A^* },[\cdot_\lambda \cdot],\omega,\B)$ is a quadratic Rota-Baxter Lie conformal algebra where $ \omega_\lambda (\cdot,\cdot)$ defined by Eq. \eqref{eq-blf}.}
            \end{example}

\subsection{Induced structures from factorizable Novikov bialgebras and their correspondences with those from a class of factorizable Lie conformal bialgebras}
    \begin{definition} \cite{YH}
A {\bf post-Novikov algebra} is a quadruple $(A, \cdot,\lhd,\rhd )$, where $(A,\cdot)$ is a Novikov algebra and  $\lhd,\rhd:A\times A\rightarrow A$ are binary operations  satisfying
{\small
\begin{align}
    \label{eq-pn1}  &(a\rhd c)\lhd b=(a\rhd b+a\lhd b+a\cdot b)\rhd c,\\
    \label{eq-pn2}  &a\rhd(c\lhd b)-(a\rhd c)\lhd b+(c\lhd a)\lhd b=c\lhd(a\lhd b+a\rhd b+a\cdot b),\\
    \label{eq-pn3}&(a\rhd b+a\lhd b+a\cdot b)\rhd c-(b\rhd a +b\lhd a+b\cdot a )\rhd c=a\rhd(b\rhd c)-b\rhd(a\rhd c),\\
        \label{eq-pn4}&(a\lhd b)\lhd c=(a\lhd c)\lhd b,\\
            \label{eq-pn5}&(a\rhd b)\cdot c-a\rhd(b\cdot c)=(b\lhd a)\cdot c-b\cdot(a\rhd c),\\
                \label{eq-pn6}&(b\cdot c)\lhd a-b\cdot(c\lhd a)=(c\cdot b)\lhd a -c\cdot(b\lhd a),\\
    \label{eq-pn7}&(a\rhd b)\cdot c=(a\rhd c)\cdot b,\\
    \label{eq-pn8}&(a\cdot b)\lhd c=(a\lhd c)\cdot b,\qquad \forall a,b,c\in A.
\end{align}}
\end{definition}

\begin{definition}
    A bilinear form $(\cdot,\cdot)$ on a post-Novikov algebra $(A,\cdot,\lhd,\rhd)$ is called {\bf invariant} if $(\cdot,\cdot)$ satisfies Eq.  \eqref{pn-bf-1} and the following condition.
    \begin{equation}\label{pn-bf2}
        (a\rhd b,c)=-(b,a\rhd c+c\lhd a), \qquad \forall a,b,c\in A.
    \end{equation}

    Denote a post-Novikov algebra $(A,\cdot,\lhd,\rhd)$ equipped with a non-degenerate symmetric invariant bilinear form $(\cdot,\cdot)$ by
    $(A,\cdot,\lhd,\rhd,(\cdot,\cdot))$.
\end{definition}
\begin{proposition}\label{pl-n}
Consider a quadruple $ (A,\cdot,\lhd,\rhd)$, where $A$ is a vector space and $\cdot,\lhd,\rhd $ are binary operations on A. Then $(R(A)={\bf k}[\partial ]A, [\cdot_\lambda \cdot], \cdot_\lambda \cdot)$ is a post-Lie conformal algebra where $[\cdot_\lambda \cdot]$ and $\cdot_\lambda \cdot$ are defined by
\begin{align}\label{eq-pn}
    [a_\lambda b]=\partial(b\cdot a)+\lambda(a\cdot b+b\cdot a),\quad a_\lambda b=\partial(b\lhd a)+\lambda(a\rhd b+b\lhd a),\qquad\forall a,b,c\in A.
\end{align}
if and only if  $ (A,\cdot,\lhd,\rhd)$ is a post-Novikov algebra.
$(R(A),[\cdot_\lambda \cdot],\cdot_\lambda \cdot)$ is called the
{\bf post-Lie conformal algebra corresponding to (the post-Novikov
algebra) } $(A,\cdot,\lhd,\rhd)$. In addition, if there is a
conformal bilinear form $\omega_\lambda(\cdot,\cdot)$ defined by
$\omega_\lambda(a,b)= (a,b)$ for all $a,b\in A$, then
$(R(A),[\cdot_\lambda \cdot],\cdot_\lambda \cdot,
\omega_\lambda(\cdot,\cdot))$ is a generalized pseudo-Hessian
post-Lie conformal algebra if and only if  $(A,\cdot,\lhd,\rhd,(\cdot,\cdot))$
is a  post-Novikov algebra equipped with a non-degenerate
symmetric invariant bilinear form $(\cdot,\cdot)$.

\end{proposition}
\begin{proof}
By Proposition \ref{pp-lc-gd}, $(R(A),[\cdot_\lambda \cdot])$ is a
Lie conformal algebra if and only if $(A,\cdot)$ is a Novikov
algebra. Taking Eq. \eqref{eq-pn} into Eq. \eqref{eqpl2}, by
comparing the coefficients of
$\lambda^2,\lambda\partial,\lambda\mu,\partial^2$, we show that
\delete{ For all $a,b,c\in A$, we have
\begin{align*}
    &(a_\lambda b)_{\lambda+\mu}c-(b_\mu a)_{\lambda+\mu}c+[a_\lambda b]_{\lambda+\mu}c+b_\mu(a_\lambda c)- a_\lambda(b_\mu c)\\
    =&( \partial (b\lhd a-a\lhd b+b\cdot a)+\lambda (a\rhd b+b\lhd a+a\cdot b+b\cdot a)-\mu(b\rhd a+a\lhd b)  )_{\lambda+\mu }c\\
    &+b_\mu(\partial (c\lhd a)+\lambda(a\rhd c+c\lhd a))-a_\lambda(\partial (c\lhd b)+\mu(b\rhd c+c\lhd b))\\
    =&\partial^2( (c\lhd b)\lhd a-(c\lhd a)\lhd b  )+ \mu^2(-c\lhd(b\diamond a) - (b\diamond a)\rhd c +b\rhd (c\lhd a)+(c\lhd a)\lhd b)\\
    &+\lambda^2(c\lhd (a\diamond b)+(a\diamond b)\rhd c-a\rhd (c\lhd b)-(c\lhd b)\lhd a   )
    +\lambda \partial (c\lhd (a\diamond b)- a\rhd ( c\lhd b)\\
    &+(a\rhd c+c\lhd a)\lhd b-2(c\lhd b)\lhd a )
    +\mu\partial ( -c\lhd (b\diamond a)+ b\rhd ( c\lhd a)-(b\rhd c+c\lhd b)\lhd a\\
    &+2(c\lhd a)\lhd b   )
    +\lambda\mu(c\lhd (a\diamond b- b\diamond a)+(a\diamond b-b\diamond a)\rhd c -(b\rhd c+c\lhd b)\lhd a\\
    &-a\rhd (b\rhd c+c\lhd b)+b\rhd(a\rhd c+c\lhd a)+(a\rhd c+c\lhd a)\lhd b ),
\end{align*}
where $a\diamond b=a\lhd b+a\rhd b+a\cdot b$.}
 Eq. \eqref{eqpl2} holds if and only if Eqs. \eqref{eq-pn1} -- \eqref{eq-pn4} hold. Similarly, we  show that Eq. \eqref{eqpl3} holds if and only if  Eqs. \eqref{eq-pn5} -- \eqref{eq-pn8} hold. Thus, the first conclusion holds. By  Proposition \ref{pp-qrblca-gda}, we just need to verify that
Eq. \eqref{cbf3} holds if and only if Eq. \eqref{pn-bf2} holds.
For all $a,b, c\in A$, we have
 \begin{align*}
    &\omega_{\lambda+\mu}(a_\lambda b,c)+\omega_\mu(b,a_\lambda c)\\
    =&\omega_{\lambda+\mu}(\partial(b\lhd a)+\lambda(a\rhd b+b\lhd a),c )+\omega_{\mu}(b, \partial(c\lhd a)+\lambda(a\rhd c+c\lhd a)).
\end{align*}
 By comparing the coefficient of $\lambda$, we get the second conclusion.
\end{proof}

\begin{definition}
A   {\bf partial-pre-post-Novikov algebra (pp-post-Novikov algebra)} $(A,\cdot,\nearrow,\searrow,$ $\nwarrow,\swarrow)$ is a Novikov algebra $(A,\cdot)$ endowed with four
 binary operations $\nearrow,\searrow,\nwarrow,\swarrow:A\times A\rightarrow A$ satisfying the following
 conditions.
{\small
\begin{align}
    \label{pppn1}&(c\nwarrow b+ c\nearrow b)\cdot a=(c\cdot a )\nwarrow b+(c\cdot a)\nearrow b,\\
    \label{pppn2}&a\cdot (c\nearrow b+c\nwarrow b)+(c\nearrow b+ c\nwarrow b)\cdot a=(a\nearrow b+a\searrow b)\cdot c+c\cdot (a\nearrow b+a\searrow b),\\
    \label{pppn3}&(a \cdot b) \searrow c + c \nwarrow (a \cdot b) = a \searrow (c \cdot b) + (c \cdot b) \nwarrow a,\\
        \label{pppn4}&(c \cdot b) \nwarrow a = (c \cdot a) \nwarrow b= (c\cdot b)\searrow a,\\
           \label{pppn5}&(a\nwarrow b + a\swarrow b) \cdot c + c \cdot (a\nwarrow b + a\swarrow b)
        = a \swarrow (c \cdot b) + a \searrow (c \cdot b) + (c \cdot b)\nwarrow a+ (c\cdot b)\nearrow a , \\
             \label{pppn6}   &a \searrow (b \cdot c) + (b \cdot c) \nwarrow a = b \searrow (a \cdot c) + (a \cdot c) \nwarrow b,\\
              \label{pppn7}& (a \nwarrow b- a\searrow b ) \cdot c =  c\cdot ( a\searrow b- a\nwarrow b)=0,\\
             \label{pppn8}&(a\swarrow c + a\searrow c) \cdot b = (a\nwarrow b + a\swarrow b) \cdot c,                \\
             \label{pppn9}   & (a\nwarrow b+a\swarrow b )\searrow c+c\nwarrow(a\nwarrow b+a\swarrow b)\\
             &= a\cdot(c\nwarrow b)+(c\nwarrow b)\cdot a+a\swarrow(c\nwarrow b)+(c\nwarrow b)\nearrow a,\nonumber\\
              \label{pppn10}& a\swarrow(c\nearrow b )+(c\nearrow b)\nearrow a+(a\searrow c)\cdot b\\
             &= c\nearrow(a\star b)+c\nwarrow(a\cdot b)+a\cdot (c\nwarrow b)+(c\nwarrow b)\cdot a+(a\swarrow c)\nearrow b,  \nonumber \\
             \label{pppn11}  &(a\nwarrow b+a\nearrow b)\nwarrow c=(a\nwarrow c)\nearrow b+(a\nwarrow c)\cdot b,\\
              \label{pppn12}&(a\star b)\swarrow c+(a\searrow c)\cdot b+(a\cdot b)\searrow c=(a\swarrow c)\nearrow b,
\end{align} }
    {\small
        \begin{flalign}
       \label{pppn13} &a\swarrow (c\searrow b)+(c\searrow b)\nearrow a+a\cdot(c\searrow b)+(c\searrow b)\cdot a\\
       &=c\searrow(a\swarrow b+a\searrow b)+(a\swarrow b+a\searrow b)\nwarrow c,\nonumber\\
                \label{pppn14}&(c\nearrow a+c\searrow a)\searrow b=(c\searrow b)\nearrow a+(c\searrow b)\cdot a,\\
            \label{pppn15}&a\swarrow(b\swarrow c)+(b\swarrow c)\nearrow a+b\cdot(a\swarrow c)+(a\swarrow c)\cdot b\\
        &=b\swarrow(a\searrow c)+(a\swarrow c)\nearrow b+a\cdot(b\swarrow c)+(b\swarrow c)\cdot a,\nonumber\\
        \label{pppn16} & (a\nwarrow b+a\swarrow b)\searrow c=(a\swarrow c+a\searrow c)\nwarrow b,\\
        \label{pppn17}  &(a\nearrow b+a\searrow b)\searrow c+c\nwarrow (a\nearrow b+a\searrow b)\\
        &=a\searrow(c\nearrow b+c\nwarrow b)+(c\nearrow b+c\nwarrow b)\nwarrow a, \nonumber\\
            \label{pppn18}   &(a\nearrow b)\nearrow c+(a\searrow c)\cdot b=(a\nearrow c)\nearrow b+(a\searrow b)\cdot c,\qquad \forall a,b,c\in A,&
\end{flalign} }
where $a\star b= a\nwarrow b+a\nearrow b+a\swarrow b+a\searrow
b+a\cdot b$. In particular, a
pp-post-Novikov algebra is called {\bf special}, if $a\searrow
b=a\nwarrow b$ for all $a$, $b\in A$.
\end{definition}
\begin{proposition}\label{pppl-n}
Let $A$ be a vector space with binary
 $\cdot,\nearrow,\searrow,\nwarrow$ and $\swarrow$.  Then $(R(A)={\bf k}[\partial ]A, [\cdot_\lambda \cdot], \lhd_\lambda, \rhd_\lambda)$ is a pp-post-Lie conformal algebra where $[\cdot_\lambda \cdot]$,  $\lhd_\lambda$ and $ \rhd_\lambda$ are respectively defined by
\begin{equation}\label{pppn}
    \begin{aligned}
        &[a_\lambda b]=\partial(b\cdot a)+\lambda(a\cdot b+b\cdot a),   \qquad   a\lhd_\lambda b=\partial(b\searrow a)+\lambda(a\nwarrow b+b\searrow a),   \\
        & a \rhd_\lambda b=\partial(b\nearrow a)+\lambda(a\swarrow b+b\nearrow a),  \qquad\forall a,b,c\in A.
    \end{aligned}
\end{equation}
if and only if  $(A,\cdot,\nearrow,\searrow,\nwarrow,\swarrow)$  is a pp-post-Novikov algebra. In addition,  $(R(A),[\cdot_\lambda \cdot],\lhd_\lambda, \rhd_\lambda)$ is special if and only if $(A,\cdot,\nearrow,\searrow,\nwarrow,\swarrow)$ is special. $(R(A),[\cdot_\lambda \cdot],\lhd_\lambda, \rhd_\lambda)$ is called the {\bf pp-post-Lie conformal algebra corresponding to (the pp-post-Novikov algebra) } $(A,\cdot,\lhd,\rhd)$.
\end{proposition}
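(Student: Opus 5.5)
We prove Proposition~\ref{pppl-n} the same way as Proposition~\ref{pl-n}: substitute the brackets \eqref{pppn} into the defining identities of Definition~\ref{df-ppplc} and compare coefficients. First, by Proposition~\ref{pp-lc-gd} (with trivial Lie bracket), $(R(A),[\cdot_\lambda\cdot])$ is a Lie conformal algebra if and only if $(A,\cdot)$ is a Novikov algebra. The $\lambda$-products $\lhd_\lambda,\rhd_\lambda$ are defined on $A$ and extended to $R(A)$ by conformal sesquilinearity, so the extension is well defined. Each of \eqref{pppl1}--\eqref{pppl5} is ${\bf k}[\partial]$-compatible in every argument. Hence it suffices to check these identities for $a,b,c\in A$.

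For $a,b,c\in A$, each side of \eqref{pppl1}--\eqref{pppl5} is a polynomial in $\partial,\lambda,\mu$ of total degree two with coefficients in $A$. Since $R(A)={\bf k}[\partial]\otimes A$ is free, we may compare coefficients of the monomials $\partial^2,\lambda\partial,\mu\partial,\lambda^2,\lambda\mu,\mu^2$. Brackets such as $[a_\lambda b]\lhd_{\lambda+\mu}c$ and $[a_\lambda c]\lhd_{-\mu-\partial}b$ must be expanded using sesquilinearity: $(\partial u)\lhd_\nu v=-\nu\, u\lhd_\nu v$, and $u\lhd_\nu \partial v=(\nu+\partial)\,u\lhd_\nu v$. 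In the expression $\{a_\lambda b\}$, the substitution $-\lambda-\partial$ produces extra $\partial$-terms, and these must be tracked.

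The work then splits into five blocks.
\begin{itemize}
\item \eqref{pppl1}: it involves only $\lhd$ and $\cdot$, i.e.\ $\searrow,\nwarrow,\cdot$. Its coefficients should give \eqref{pppn1}, \eqref{pppn3}, \eqref{pppn4}, \eqref{pppn6} and part of \eqref{pppn7}.
\item \eqref{pppl2}: it should give \eqref{pppn2} and the remaining part of \eqref{pppn7}.
\item \eqref{pppl3}: it mixes $\rhd$ with the bracket, and should give \eqref{pppn5} and \eqref{pppn8}.
\item \eqref{pppl4} and \eqref{pppl5}: their quadratic terms in $\lambda,\mu$ and their $\partial$-terms should give \eqref{pppn9}--\eqref{pppn18}, with $\star$ arising from the bracket $\{a_\lambda b\}$.
\end{itemize}
In each block we show that the coefficient identities collected are equivalent to the stated subset of the pp-post-Novikov axioms. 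Symmetric pairs such as the $\lambda^2$ and $\mu^2$ coefficients typically coincide after renaming $a\leftrightarrow b$. Some coefficients are consequences of others together with \eqref{na1}--\eqref{na2}, so we must also show that no identity is lost or spurious. This gives the equivalence in both directions.

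For speciality, skew-symmetry of $\lhd_\lambda$ means
$$a\lhd_\lambda b=-b\lhd_{-\lambda-\partial}a=-\partial(a\searrow b)+(\lambda+\partial)(b\nwarrow a+a\searrow b)=\partial(b\nwarrow a)+\lambda(b\nwarrow a+a\searrow b).$$
Comparing this with $\partial(b\searrow a)+\lambda(a\nwarrow b+b\searrow a)$ coefficientwise yields exactly $b\searrow a=b\nwarrow a$ for all $a,b$, which is the definition of special.

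The main obstacle is the bookkeeping in \eqref{pppl4} and \eqref{pppl5}. These involve triple nested products and the composite bracket $\{\cdot_\lambda\cdot\}$, and matching the resulting coefficient equations with the somewhat irregular list \eqref{pppn9}--\eqref{pppn18} is delicate. To handle it, we first write $a\lhd_\lambda b$ and $a\rhd_\lambda b$ in the uniform shape $\partial X(b,a)+\lambda(Y(a,b)+X(b,a))$, as is done for $\cdot_\lambda$ in Proposition~\ref{pl-n}. We then compute a generic nested product $(u\,\square_\lambda v)\,\square'_{\lambda+\mu}w$ and $u\,\square_\lambda(v\,\square'_\mu w)$ once in terms of $X,Y$, and specialise these formulas in each identity.
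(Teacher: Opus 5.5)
Your strategy is the paper's: Proposition~\ref{pp-lc-gd} for the Novikov part, substitution of \eqref{pppn} into \eqref{pppl1}--\eqref{pppl5}, comparison of coefficients, and the same skew-symmetry computation for speciality, which you do correctly. What is wrong is the predicted matching between the two lists of axioms, and that matching is essentially the whole content of the proof.

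Identities \eqref{pppl1} and \eqref{pppl2} involve only $\lhd_\lambda$ and the bracket, so their coefficients can contain only $\searrow$, $\nwarrow$ and $\cdot$. They therefore cannot produce \eqref{pppn1} or \eqref{pppn2}, both of which contain $\nearrow$. Moreover, every term of \eqref{pppl1} has the form $x\searrow(y\cdot z)$, $(y\cdot z)\searrow x$, $x\nwarrow(y\cdot z)$ or $(y\cdot z)\nwarrow x$. So no part of \eqref{pppn7}, where $\cdot$ is the outermost operation, can come from it.

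The paper's proof records the following split:
\begin{itemize}
\item \eqref{pppl1} gives \eqref{pppn3}, \eqref{pppn4}, \eqref{pppn6}. Indeed, the $\lambda^2$ and $\mu^2$ coefficients are relabelings of \eqref{pppn3} and \eqref{pppn6}, and the $\partial^2$ and $\lambda\partial$ coefficients give \eqref{pppn4}.
\item \eqref{pppl2} gives \eqref{pppn7}.
\item \eqref{pppl3} gives \eqref{pppn1}, \eqref{pppn2}, \eqref{pppn5}, \eqref{pppn8}.
\item \eqref{pppl4} gives \eqref{pppn10}, \eqref{pppn12}, \eqref{pppn15}, \eqref{pppn18}.
\item \eqref{pppl5} gives the remaining six.
\end{itemize}

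Second, these equivalences are not independent block by block. The redundancies you mention come from other pp-post-Novikov axioms, not only from \eqref{na1}--\eqref{na2}, so the equivalence has to be argued for the conjunction, in order.

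For example, put $D(x,y)=x\searrow y-x\nwarrow y$. The coefficient equations of \eqref{pppl2} are equivalent to \eqref{pppn7} together with $D(x\cdot y,z)=0$ and $D(z,x\cdot y)=0$. The first extra condition is part of \eqref{pppn4}. The second is not in the list. It follows by subtracting \eqref{pppn6} (with $b,c$ interchanged) from \eqref{pppn3} (with $a,c$ interchanged) and then using $(c\cdot b)\searrow a=(c\cdot b)\nwarrow a$ from \eqref{pppn4}.

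So ``\eqref{pppl2} $\Leftrightarrow$ \eqref{pppn7}'' holds only once \eqref{pppl1} is available. Your ``no identity lost or spurious'' step has to be carried out with this ordering in mind.
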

\begin{proof}
Taking Eq. \eqref{pppn} into Eq. \eqref{pppl1}, by comparing
coefficients of $\lambda^2,\partial^2,\lambda\partial,
\lambda\mu$, we show that Eq. \eqref{pppl1} holds if and only if
Eqs. \eqref{pppn3}, \eqref{pppn4} and \eqref{pppn6} hold.
Similarly, we have
\begin{align*}
    & {\rm Eq.} \eqref{pppl2}  \Longleftrightarrow {\rm Eq.} \eqref{pppn7},\;\;
    {\rm Eq.} \eqref{pppl3}\Longleftrightarrow {\rm Eqs.} \eqref{pppn1}, \eqref{pppn2}, \eqref{pppn5} {\rm~ and~} \eqref{pppn8}, \\
    &  {\rm Eq.} \eqref{pppl4}\Longleftrightarrow  {\rm Eqs.} \eqref{pppn10},\eqref{pppn12},\eqref{pppn15} {\rm ~and ~} \eqref{pppn18},\\
    &{\rm Eq.} \eqref{pppl5} \Longleftrightarrow  {\rm Eqs.} \eqref{pppn9},\eqref{pppn11},\eqref{pppn13},\eqref{pppn14},\eqref{pppn16} {\rm ~and~ }\eqref{pppn17}.
\end{align*}
Thus, the first conclusion holds and the second conclusion follows
from
$$
a\lhd_\lambda b=\partial(b\searrow a)+\lambda(a\nwarrow b+b\searrow a)=-b\lhd_{-\lambda-\partial}a=\partial(b\nwarrow a)+\lambda(b\nwarrow a+a\searrow b)
$$
for all $a,b\in A$.
\end{proof}
\begin{theorem}\label{th-fn-pna}
Let $(A ,\cdot,\alpha_r)$ be a factorizable Novikov bialgebra with
$I_G=r^G_+-r^G_-$. Then there is a post-Novikov algebra
$(A,\cdot,\lhd,\rhd,(\cdot,\cdot))$ with a non-degenerate  symmetric invariant
bilinear form $(\cdot,\cdot)$, where $\lhd$, $\rhd $ and
$(\cdot,\cdot)$ are respectively given by
$$ a\lhd b=a\cdot r^G_-\circ {I_G}^{-1}(b),\quad  a\rhd b=r^G_-\circ {I_G}^{-1}(a) \cdot b ,\quad   (a,b)=\langle {I_G}^{-1}a,b\rangle,\qquad   \forall  a,b\in A.$$
Furthermore, there is a special pp-post-Novikov algebra
$(A,\cdot,\nearrow,\searrow,$ $\nwarrow,\swarrow)$ in which
$\nearrow,\searrow,$ $\nwarrow$ and $\swarrow$ are respectively
given by
\begin{align*}
    &a\nwarrow b=a\searrow b=(r^G_-\circ {I_G}^{-1}+\id)(a\cdot b),\quad  a\nearrow b= a\cdot r^G_-\circ {I_G}^{-1}(b)-a\searrow b,\\
    &a\swarrow b=r^G_-\circ {I_G}^{-1}(a)\cdot b-a\searrow b , \qquad \forall a,b\in A.
\end{align*}

\end{theorem}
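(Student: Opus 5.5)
The plan is to pass to the conformal side, use the results of Section~\ref{3} there, and then translate back to $A$ through the correspondences of Propositions~\ref{pl-n} and~\ref{pppl-n}.

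\emph{Step 1 (pass to the conformal side).} Let $(R(A),[\cdot_\lambda\cdot],\Delta_r)$ be the Lie conformal bialgebra corresponding to the Novikov bialgebra $(A,\cdot,\alpha_r)$, with trivial Lie bracket and $\beta_r=0$. By Theorem~\ref{th-flgd}, specialized as in the remark following it, this is a factorizable Lie conformal bialgebra. Moreover $r_\pm|_A=r^G_\pm$, so $I|_A=I_G$ and $r_-\circ I^{-1}$ restricts to $P:=r^G_-\circ I_G^{-1}$ on $A$.

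\emph{Step 2 (apply the conformal results).} Apply Theorem~\ref{th-ias} with weight $\theta=1$. This gives a generalized pseudo-Hessian post-Lie conformal algebra on $R(A)$ with
\[
a_\lambda b=[P(a)_\lambda b]=\partial(b\cdot P(a))+\lambda\bigl(P(a)\cdot b+b\cdot P(a)\bigr),
\]
and with $\omega_\lambda(a,b)=\langle I_G^{-1}a,b\rangle$ for $a,b\in A$. Comparing with Eq.~\eqref{eq-pn} gives $b\lhd a=b\cdot P(a)$ and $a\rhd b=P(a)\cdot b$, which are exactly the stated operations. Since $\omega$ is left-invariant by the remark after Proposition~\ref{pp-rb-pl}, Proposition~\ref{pl-n} shows that $(A,\cdot,\lhd,\rhd,(\cdot,\cdot))$ is a post-Novikov algebra with a non-degenerate symmetric invariant form. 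Here I will check that $\omega$ restricted to $A$ is constant in $\lambda$; this holds because $I_G$ maps $A^*$ into $A$. I will also check symmetry directly from $I_G=I_G^*$, which is Lemma~\ref{I}.

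\emph{Step 3 (the special pp-post structure).} For the second part I use the special pp-post-Lie conformal algebra from Theorem~\ref{th-ias}:
\begin{align*}
a\lhd_\lambda b&=(P+\id)[a_\lambda b]=\partial\bigl((P+\id)(b\cdot a)\bigr)+\lambda(P+\id)(a\cdot b+b\cdot a),\\
a\rhd_\lambda b&=[P(a)_\lambda b]-a\lhd_\lambda b.
\end{align*}
Here I use that $P$ is a ${\bf k}[\partial]$-module map preserving $A$. Matching with Eq.~\eqref{pppn}, I read off $b\searrow a=(P+\id)(b\cdot a)$ and $a\nwarrow b=(P+\id)(a\cdot b)$, so $\searrow=\nwarrow$ as stated. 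Similarly, $b\nearrow a=b\cdot P(a)-b\searrow a$ and $a\swarrow b=P(a)\cdot b-a\nwarrow b$. Proposition~\ref{pppl-n} then gives a special pp-post-Novikov algebra.

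The main obstacle is bookkeeping. I have to extract four operations from two $\lambda$-products by matching the $\partial$-coefficient and the $\lambda$-coefficient, and this extraction must be consistent. Consistency requires that the $\lambda$-coefficient equal the sum of the two predicted terms, which holds by linearity of $P+\id$. I also have to confirm that every structure produced on $R(A)$ has the form $R(\cdot)$ demanded by the correspondence propositions, which holds because $P(A)\subseteq A$.
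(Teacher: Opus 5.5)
Your proof is correct, but it traverses the commutative diagram in the opposite direction from the paper.

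The paper stays on the Novikov side. Theorem~\ref{th-rbgd-fgd} makes $P=r^G_-\circ I_G^{-1}$ a Rota-Baxter operator of weight $1$ on $(A,\cdot)$, and Corollary 2.18 of [YH] then yields the post-Novikov algebra. The invariance \eqref{pn-bf2} is just \eqref{pn-bf-1} applied with $P(a)$ in place of $a$. The special pp-post-Novikov identities \eqref{pppn1}--\eqref{pppn18} are checked directly from the Novikov axioms and the Rota-Baxter identity, though only \eqref{pppn1} is written out.

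You instead lift to $R(A)$ by Theorem~\ref{th-flgd}, take the structures already supplied by Theorem~\ref{th-ias}, and pull them back through the ``only if'' directions of Propositions~\ref{pl-n} and~\ref{pppl-n}. This has two advantages:
\begin{itemize}
\item it spares you the identity-by-identity verification;
\item it proves at the same time that the Novikov structures are exactly those corresponding to the conformal ones, which is the commutativity of the diagram the paper asserts after the theorem.
\end{itemize}
The price is that everything rests on the coefficient-comparison equivalences of Propositions~\ref{pl-n} and~\ref{pppl-n}, which the paper only sketches.

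You were right to feed in left-invariance \eqref{cbf3} rather than only the generalized pseudo-Hessian condition \eqref{cbf2}. On $A$ one gets $\omega_{\lambda+\mu}(a_\lambda b,c)+\omega_\mu(b,a_\lambda c)=\lambda\bigl((a\rhd b,c)+(b,a\rhd c+c\lhd a)\bigr)+\mu\bigl((b,c\lhd a)-(b\lhd a,c)\bigr)$. So \eqref{cbf3} gives \eqref{pn-bf2} directly, and this is what the proof of Proposition~\ref{pl-n} actually uses. Citing the statement of that proposition with only \eqref{cbf2} as hypothesis would not be justified by its proof.

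Two minor points:
\begin{itemize}
\item The restrictions should read $r_\pm|_{A^*}=r^G_\pm$, with $A^*\subset R(A)^{*c}$.
\item The consistency worry in Step 3 is empty. Any product of the form $\partial X(b,a)+\lambda Y(a,b)$ fits \eqref{pppn} by setting $a\nwarrow b:=Y(a,b)-X(b,a)$. The only real input is that $P+\id$ preserves $A$ and commutes with $\partial$, so no other powers of $\lambda$ or $\partial$ appear.
\end{itemize}
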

\begin{proof}
By Theorem \ref{th-rbgd-fgd}, $\B=r_-^G\circ I_G^{-1}$ is a
Rota-Baxter operation of weight 1 on $(A,\cdot)$. Hence, $(A,
\cdot ,\lhd,\rhd)$ is a post-Novikov algebra by \cite[Corollary
2.18]{YH}, where $\lhd $ and $\rhd $ are respectively given by
$$a\lhd b=a\cdot \B(b),\;\; a\rhd b=\B(a)\cdot b,\;\;\forall a,~b\in
A.$$ Let $a,~b,~c\in A$. Since $(\cdot,\cdot)$ is a non-degenerate
symmetric invariant bilinear form on $(A,\cdot)$, we have
\begin{align*}
    (  r^G_-\circ I_G^{-1}(a)\cdot b, c )=-( b, r^G_-\circ I_G^{-1}(a)\cdot c+c\cdot  r^G_-\circ I_G^{-1}(a)    ).
\end{align*}
Hence $(\cdot,\cdot)$ is a non-degenerate symmetric invariant
bilinear form on $(A,\cdot,\lhd,\rhd)$.  By Eq. \eqref{na1}, we have
\begin{align*}
&(c\nwarrow b+ c\nearrow b)\cdot a-(c\cdot a )\nwarrow b-(c\cdot a)\nearrow b
=(c\cdot  r_-^G\circ  I_G^{-1}(b))\cdot a-(c\cdot a)\cdot (r_-^G\circ  I_G^{-1}(b))=0.
\end{align*}
Similarly, we verify that Eqs. \eqref{pppn2} -- \eqref{pppn18}
hold. Thus, $(A,\cdot,\nearrow,\searrow,\nwarrow,\swarrow)$ is a
special pp-post-Novikov algebra.
\end{proof}

Combining Propositions~\ref{phpl-pppl}, \ref{pl-n} and
\ref{pppl-n}, Theorems~\ref{th-flgd} and \ref{th-fn-pna} together,
we have the following commutative diagram.

 \begin{equation*} 
        \hspace*{-1.5cm}%
        \scalebox{0.85}{
            {\small \xymatrix@C=1.5cm{
                    &\txt{ factorizable Novikov bialgebras\\$(A,\cdot,\alpha_r)$  }\ar[r]^{\quad\text{Thm.} \ref{th-fn-pna}}\ar[d]^{\text{Thm.}\ref{th-flgd}}
                    &\txt{    post-Novikov algebras\\  with a non-degenerate \\symmetric invariant
bilinear form    \\$(A,\cdot,\lhd,\rhd,(\cdot,\cdot))$}
\ar[r]^{\text{Thm.}
\ref{th-fn-pna}}\ar[d]^{\text{Prop.}\ref{pl-n}}
                    & \txt{   special\\ pp-post-Novikov algebras\\$(A,\cdot,\nearrow,\searrow,\nwarrow,\swarrow)$  } \ar[d]^{\text{Prop.}\ref{pppl-n}}\\  
                    &\txt{ factorizable Lie conformal\\ bialgebras $(R(A),[\cdot_\lambda\cdot],\Delta_r)$}   \ar[r]^{\text{Thm.}\ref{th-ias} }   \ar[u]
                    &\txt{generalized pseudo-Hessian\\post-Lie conformal algebras\\ $(R(A),[\cdot_\lambda\cdot],\cdot_\lambda\cdot,\omega_\lambda(\cdot,\cdot))$} \ar[r]^{\qquad\text{Prop.}\ref{phpl-pppl}}\ar[u]
                    & \txt{special pp-post-Lie \\conformal algebras\\$(R(A),[\cdot_\lambda\cdot],\lhd_\lambda,\rhd_\lambda)$ } \ar[u]
                     }}}
    \end{equation*}

\begin{example}\label{ex2n}
    Let $(A,\cdot)$ be a 2-dimensional Novikov algebra in \cite{BM} with a basis $\{e_1,e_2 \}$ whose nonzero products are given by
    \begin{align*}
        &e_1\cdot e_2=e_1,\qquad e_2\cdot e_1=-2e_1,\qquad e_2\cdot e_2=e_2.
    \end{align*}
    Define a linear map $\alpha_r: A\rightarrow A\otimes A$ by  $\alpha_r (e_1)=e_1\otimes e_2$ and $ \alpha_r(e_2)=0$.

    It is straightforward  to show that $(A,\cdot,\alpha_r)$ is a Novikov bialgebra with $r=e_1\otimes e_2$ and $I_G=r^G_+-r^G_-$ is an isomorphism of Novikov algebras. Hence $(A,\cdot,\alpha_r)$ is a factorizable Novikov bialgebra.
    Thus there is a   post-Novikov algebra $(A,\cdot,\lhd,\rhd,(\cdot,\cdot))$
    with a non-degenerate symmetric invariant bilinear form $(\cdot,\cdot)$, where the nonzero products of
    $\lhd,\rhd$ and the nonzero values of $(\cdot,\cdot)$  are  respectively given by
    \begin{align*}
       e_2\lhd e_1=2e_1,  \qquad  e_1\rhd e_2=-e_1,\qquad (e_1,e_2)=(e_2,e_1)=1.
    \end{align*}
Moreover, there is a pp-post-Novikov algebra
$(A,\cdot,\nearrow,\searrow,\nwarrow,\swarrow)$, where the nonzero
products of  $\nearrow,\searrow,\nwarrow,\swarrow$ are given by
    \begin{align*}
        e_2\nwarrow e_2=e_2 \searrow e_2=-e_2\nearrow e_2=-e_2\swarrow e_2=e_2,\qquad e_2\nearrow e_1=-2e_1\swarrow e_2=2e_1.
    \end{align*}

    By Theorem \ref{th-flgd}, one gets a factorizable Lie conformal bialgebra $( R(A),[\cdot_\lambda\cdot],\Delta_r)$ where $R(A)={\bf k}[\partial]e_1\oplus {\bf k}[\partial]e_2$ and the nonzero $\lambda$-brackets and co-product are respectively
     defined by
    \begin{align*}
        [{e_1}_\lambda e_2]=-(\lambda+2\partial)e_1,\qquad [{e_2}_\lambda e_2]=(\partial+2\lambda)e_2,\qquad
        \Delta_r(e_1)=\partial e_1\otimes e_2-e_2\otimes \partial e_1. 
    \end{align*}
 There is a generalized pseudo-Hessian post-Lie conformal algebra $(R(A),[\cdot_\lambda\cdot],\cdot_\lambda\cdot,\omega_\lambda(\cdot,\cdot))$, where the  nonzero $\lambda$-products $\cdot_\lambda\cdot$ and
 the nonzero values of $\omega_\lambda(\cdot,\cdot)$
 are  respectively given by
        \begin{align*}
        &{e_1}_\lambda e_2=(\lambda+2\partial)e_1,\qquad \omega_\lambda (e_1,e_2)=\omega_{-\lambda}(e_2,e_1)=1.
    \end{align*}
It corresponds to $(A,\cdot,\lhd,\rhd,(\cdot,\cdot))$ in the sense
of Proposition~\ref{pl-n}. Moreover, there is  a pp-post-Lie
conformal algebra
$(R(A),[\cdot_\lambda\cdot],\lhd_\lambda,\rhd_\lambda)$ whose
nonzero $\lambda$-products $\lhd_\lambda$ and $\rhd_\lambda$ are
given by
    \begin{align*}
        &{e_1}\rhd_\lambda e_2=(\lambda+2\partial)e_1,\qquad e_2\lhd_\lambda e_2=e_2\rhd_\lambda e_2=(\partial+2\lambda)e_2.
    \end{align*}
It corresponds to $(A,\cdot,\nearrow,\searrow,\nwarrow,\swarrow)$
in the sense of Proposition~\ref{pppl-n}.
\end{example}
\noindent {\bf Acknowledgments.} This research is supported by
NSFC (12171129, 12271265, 12261131498, W2412041),  the Zhejiang
Provincial Natural Science Foundation of China (No. LZ25A010004)
and the Fundamental Research Funds for the Central Universities and Nankai Zhide Foundation.

\smallskip

\noindent
{\bf Declaration of interests. } The authors have no conflicts of interest to disclose.

\smallskip

\noindent
{\bf Data availability. } No new data were created or analyzed in this study.

\end{document}